\documentclass[12pt,twoside]{article}
\usepackage[english]{babel}
\usepackage[utf8]{inputenc}

\usepackage{amsthm,amsmath,amssymb, amsfonts,stmaryrd,multicol,dsfont,mathtools,faktor}
\usepackage[usenames,dvipsnames,svgnames,table]{xcolor}
\usepackage{subcaption}
\usepackage{tikz}
\usetikzlibrary{fit}
\usetikzlibrary{arrows.meta}

\usepackage[left=3cm,right=3cm,top=3cm,bottom=3cm]{geometry}

\usepackage[pdftex,pdfauthor={Laura Monk},pdftitle={Typical hyperbolic surfaces have an
  optimal spectral gap}]{hyperref} \hypersetup{colorlinks=true, breaklinks=true, urlcolor=
  blue,linkcolor= blue, citecolor=orange}

\usepackage[capitalize]{cleveref} \crefname{equation}{equation}{equations}
\Crefname{equation}{Equation}{Equations} \crefname{figure}{Figure}{Figures}
\Crefname{figure}{Figure}{Figures}

\theoremstyle{theorem}
\newtheorem{thm}{Theorem}[section]
\newtheorem{prp}[thm]{Proposition}

\newtheorem{lem}[thm]{Lemma}

\theoremstyle{remark}
\newtheorem{ques}[thm]{Question}
\newtheorem{rem}[thm]{Remark}
\theoremstyle{definition}
\newtheorem{defi}[thm]{Definition}
\newtheorem{nota}[thm]{Notation}

\theoremstyle{example}
\newtheorem{expl}[thm]{Example}

\title{\textbf{Typical hyperbolic surfaces have an optimal spectral gap}}
\author{Laura Monk}
\date{Current Developments in Mathematics 2025}

\newcommand{\R}{\mathbb{R}}
\newcommand{\C}{\mathbb{C}}
\newcommand{\Z}{\mathbb{Z}}
\newcommand{\IH}{\mathbb{H}}

\DeclareMathOperator{\diam}{diam}
\newcommand{\PSL}{\mathrm{PSL}_2(\mathbb{R})}
\def\Tr{{\rm Tr}}
\renewcommand{\d}{\, \mathrm{d}}
\newcommand{\x}{\mathbf{x}}

\let\div\relax
\newcommand{\div}[1]{\left(\frac{#1}{2}\right)}

\newcommand{\cT}{\mathcal{T}}
\newcommand{\cR}{\mathcal{R}}
\newcommand{\cF}{\mathcal{F}}
\newcommand{\cM}{\mathcal{M}}
\newcommand{\cL}{\mathcal{L}}
\newcommand{\cP}{\mathcal{P}}

\newcommand{\cN}{\mathcal{N}}
\newcommand{\cG}{\mathcal{G}}
\newcommand{\rK}{\mathrm{K}}
\newcommand{\rN}{\mathrm{N}}
\newcommand{\cE}{\mathcal{E}}

\renewcommand{\O}[2][ ]{\mathcal{O}_{#1} \left( #2 \right)}

\newcommand{\MCG}{\mathrm{MCG}}
\newcommand{\Sf}{\mathbf{S}}
\newcommand{\g}{g_{\mathbf{S}}}
\newcommand{\n}{n_{\mathbf{S}}}
\newcommand{\domain}{\mathfrak{D}}
\newcommand{\cc}{\mathfrak{n}}

\newcommand{\cop}{{\mathbf{c}}_{\mathrm{op}}}
\newcommand{\copg}{{\bar{\mathbf{c}}}_{\mathrm{op}}}
\newcommand{\cI}{\mathcal{I}}
\newcommand{\eqc}[1]{[ #1 ]_{\mathrm{loc}}}

\newcommand{\Volwp}[1][g]{\mathrm{Vol}_{#1}^{\mathrm{\scriptsize{WP}}}}
\newcommand{\Pwpo}{\mathbb{P}_g^{\mathrm{\scriptsize{WP}}}}
\newcommand{\Ewpo}[1][g]{\mathbb{E}_{#1}^{\mathrm{\scriptsize{WP}}}}

\newcommand{\Pwp}[1]{\Pwpo \left( #1 \right)}
\newcommand{\Ewp}[2][g]{\mathbb{E}_{#1}^{\mathrm{\scriptsize{WP}}} \Bigg[ #2 \Bigg]}

\newcommand{\curve}{\mathbf{c}}
\newcommand{\type}{\mathbf{T}}
\newcommand{\av}[2][\mathrm{all}]{\langle #2 \rangle_g^{{#1}}}
\newcommand{\ord}{K}

\DeclareMathOperator{\hyp}{hyp}
\newcommand{\thetaNe}{\vec{\theta}_{\mathrm{ne}}}
\newcommand{\thetaAc}{\vec{\theta}_{\mathrm{ac}}}

\newcommand{\Lambdabeta}{\Lambda^\beta}

\newcommand{\Lambdain}{\Lambda_{\mathrm{in}}}

\DeclareMathOperator\argch{argcosh}
\makeatletter
\newcommand{\smallbullet}{} 
\DeclareRobustCommand\smallbullet{%
  \mathord{\mathpalette\smallbullet@{0.5}}%
}
\newcommand{\smallbullet@}[2]{%
  \, \vcenter{\hbox{\scalebox{#2}{$\m@th#1\bullet$}}} \,%
}
\makeatother

\setcounter{tocdepth}{2}
\setcounter{secnumdepth}{3}

\begin{document}

\maketitle

\abstract{The first non-zero Laplace eigenvalue of a hyperbolic surface, or its spectral gap,
  measures how well-connected the surface is: surfaces with a large spectral gap are hard to cut in
  pieces, have a small diameter and fast mixing times. For large hyperbolic surfaces (of large area
  or large genus $g$, equivalently), we know that the spectral gap is asymptotically bounded above
  by $\frac 14$. The aim of these talks is to present joint work with Nalini Anantharaman, where we
  prove that most hyperbolic surfaces have a near-optimal spectral gap. That is to say, we prove
  that, for any $\epsilon > 0$, the Weil--Petersson probability for a hyperbolic surface of genus
  $g$ to have a spectral gap greater than $\frac 14- \epsilon$ goes to one as $g$ goes to
  infinity. This statement is analogous to Alon’s 1986 conjecture for regular graphs, proven by
  Friedman in 2003. I will present our approach, which shares many similarities with Friedman’s
  work, and introduce new tools and ideas that we have developed in order to tackle this problem.}

\begingroup
\hypersetup{linkcolor=black}
\tableofcontents
\endgroup

\section{Introduction: the spectral gap of a compact hyperbolic surface}
\label{sec:intro}

\subsection{Hyperbolic surfaces and the high-genus limit}

The objects we will study throughout these lectures are \emph{compact hyperbolic surfaces of large
  genus}. For a integer $g \geq 2$, by the classification of surfaces, we know that there exists one
and only one oriented, connected, closed (i.e. compact without boundary) surface of genus $g$; we
denote as $S_g$ such a surface.  We are interested in \emph{hyperbolic surfaces}, i.e. Riemannian
metrics on the topological surface $S_g$ of constant curvature $-1$. Such a hyperbolic surface is
always isometric to a quotient $X=\Gamma \diagdown \IH$ of the hyperbolic plane
\begin{equation}
  \label{eq:hyp_plane}
  \IH
  = \{ x + iy, y > 0 \} 
  \quad
  \text{equipped with}
  \quad
  \d s^2 = \frac{\d x^2 + \d y^2}{y^2}
\end{equation}
by a Fuchsian group $\Gamma$, a discrete, co-compact subgroup of $\PSL$. 
The set of all hyperbolic metrics on $S_g$, up to isometry, is the \emph{moduli space} $\cM_g$.

By the Gauss--Bonnet theorem, the area of a hyperbolic surface of genus $g$ is always
$2 \pi (2g-2)$. We will be interested in the large-genus regime, which can also be interpreted as a
large-scale regime.

One of the main characters that will appear in our story are \emph{closed geodesics}, and their
lengths.  A loop $\curve$ on the surface $X$ is a piece-wise smooth map
$\curve : \R \diagup \Z \rightarrow X$ with nowhere vanishing derivatives. Note that, in particular,
all of our loops are \emph{oriented}. Two loops $\curve_0$, $\curve_1$ are said to be homotopic if
there exists a continuous map $[0,1]\times \R \diagup \Z \rightarrow X$ which matches with
$\curve_0$ and $\curve_1$ when the first variable is fixed to be $0$ and $1$ respectively.  A loop
is called a \emph{curve} if it is simple (no self-intersections) and non-contractible. We extend
these definitions to \emph{multi-loops} $\curve = (\curve_1, \ldots, \curve_\cc)$ and multi-curves
(in the latter case we asssume that the whole multi-loop has no self-intersection, and that for
$i \neq j$, $\curve_i$ is not homotopic to $\curve_j$ nor $\curve_j^{-1}$).

Due to the negative curvature, in each non-trivial homotopy class on $X$ admits a unique
length-minimizer, which is a closed geodesic. We denote as $\cG(X)$ the set of primitive closed
geodesics on $X$, i.e. closed geodesics which are not iterates of a closed geodesic. For
$\gamma \in \cG(X)$, we denote as $\ell_X(\gamma)$ the length of the closed geodesic $\gamma$ on
$X$. The set $\{\ell_X(\gamma), \gamma \in \cG(X)\}$ is called the \emph{length spectrum}; it
contains a lot of information on the hyperbolic surface $X$.

\subsection{The spectral gap of the Laplacian}

The core focus of these notes is the spectrum of the Laplace--Beltrami operator, or Laplacian. The
Laplacian is a positive unbounded operator on the space of square-integrable functions $L^2$. One of
its defining properties is that it is invariant by isometries. The spectrum of the Laplacian
$\Delta$ is defined as the set of complex numbers $\lambda$ such that the operator
$\Delta - \lambda \mathrm{Id}$ does not have a continuous inverse defined on $L^2$.

On the hyperbolic plane $\IH$, the Laplacian can be written simply in coordinates as
$\Delta = -y^2 \big(\frac{\partial^2}{\partial x^2}+\frac{\partial^2}{\partial y^2}\big)$. Its
spectrum is purely continuous and exactly equal to the interval $[\frac 14, + \infty)$.  This
special value $\frac 14$, central to this discussion, arises from the observation that, for any
smooth $L^2$ function $f$ on $\IH$, we have
$$\int_\IH f \Delta f \geq \frac 14 \int_\IH f^2$$ by a simple application of Cauchy--Schwarz and
integration by parts. In other words, the operator $\Delta - \frac 14 \mathrm{Id}$ is positive on
$L^2(\IH)$ \cite{mckean1970}.

Considering the Laplacian on a compact hyperbolic surface $X = \Gamma \diagdown \IH$ corresponds to
restricting the domain of the Laplacian on $\IH$ to $\Gamma$-periodic functions.  Because the
surfaces we study are compact, the spectrum in this case is a discrete sequence of eigenvalues
\begin{equation}
  \label{eq:sp_lp}
  0 = \lambda_0 < \lambda_1 \leq \ldots \leq
  \lambda_n \underset{n \rightarrow + \infty}{\longrightarrow} + \infty.
\end{equation}
The first eigenvalue, $\lambda_0=0$, corresponds to constant eigenfunctions (which belong to
$L^2(X)$ because the surface has finite area). Its multiplicity is one because the dimension of the
space of constant functions is the number of connected components, here, one. The \emph{spectral
  gap} is the first non-trivial eigenvalue, $\lambda_1 > 0$.

It is very easy to construct examples of hyperbolic surfaces of any genus with a small $\lambda_1$
by pinching a separating closed geodesic, as represented in \cref{fig:dumbell}. This is what we
often refer to as a \emph{dumbbell} in spectral geometry. The idea is that, at the limit, the surface
has two connected components, and hence $\lambda_1$ goes to $0$ as the length $\epsilon$ of the
pinched closed geodesic goes to $0$.

\begin{figure}[h]
  \centering
  \includegraphics{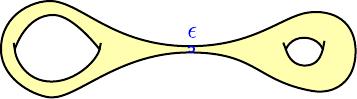}
  \caption{The dumbbell example.}
  \label{fig:dumbell}
\end{figure}

The dumbbell surface is poorly connected (it is easy to disconnect it by cutting it with
scissors). Its diameter goes to infinity as $\epsilon$ goes to $0$, and, hence, some points in the
surface become very far apart. Hyperbolic surfaces are interesting objects to study dynamically as
they are simple examples of chaotic systems, due to their negative curvature. For a small
$\epsilon$, the surface has a bottle-neck, which means that particles or random walks starting on
one side of the dumbbell will take a very long time to travel to the other side; this means that the
mixing time is unusually long.

All of these observations are actually characterizations of surfaces with a small spectral gap, and
we have the following relationships between all these notions (see the references for precise statements).
\begin{figure}[h]
  \centering
  \begin{tikzpicture}
    \node[thick, dotted,text centered,draw, rounded corners] (lambda) at (0,0)
    {$\lambda_1(X)$ is big};
    \node[thick, dotted,text centered,draw, rounded corners] (connected) at (5,-1)
    {$X$ is well-connected};
    \node[thick, dotted,text centered,draw, rounded corners] (diam) at (-4,-1)
    {$\diam(X)$ is small};
    \node[thick, dotted,text centered,draw, rounded corners] (dyn) at (0,-2)
    {$X$ has good dynamical properties};
    \draw[->, thick, shorten <= 5pt, shorten >= 5pt] (lambda) -- (diam)
    node [midway,above left] {\cite{magee2020b}};
    \draw[<->, thick, shorten <= 5pt, shorten >= 10pt] (lambda) -- (connected)
    node [midway,above right] {\cite{buser1982,cheeger1970}};
    \draw[<->, thick, shorten <= 5pt, shorten >= 5pt] (lambda) -- (dyn)
    node [midway,right] {\cite{ratner1987,golubev2019}};
    \end{tikzpicture}
  \caption{Relationship between the spectral gap and other properties of the surface.}
  \label{fig:eq_gap}
\end{figure}
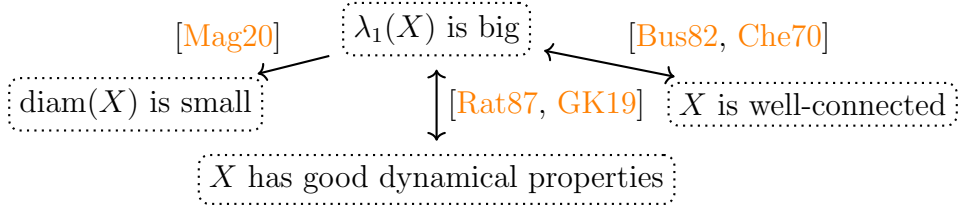

\subsection{Surfaces with a large spectral gap}
\label{sec:surfaces-with-large-gap}

Now that we know that having a large spectral gap is a desirable property, our next question is: how
big can a spectral gap be? We believe that the hyperbolic surface with the largest spectral gap is
the Bolza surface, obtained by gluing the opposite sides of a regular hyperbolic octagon with all
angles equal to $\pi/4$, as represented in Figure \ref{fig:bolza}. For this surface, we have a very
precise numerical approximation by Strohmaier and Uski \cite{strohmaier2013},
\begin{equation*}
  \lambda_1
  \approx 3.8388872588421995185866224504354645970819150157.
\end{equation*}
Whilst the fact that this maximises $\lambda_1$ over all hyperbolic surfaces is still a conjecture
to this day, recent developments have reached extremely close upper bounds, less than $2.10^{-5}$
above this value \cite{kravchuk2024,bonifacio2022}.

\begin{figure}[h]
  \centering
  \includegraphics{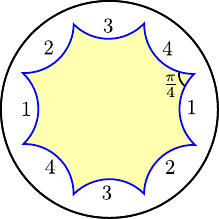}
  \caption{The construction of the Bolza surface.}
  \label{fig:bolza}
\end{figure}

However, the Bolza surface has genus $g=2$ and we are interested in the large-genus regime. As the
genus $g$ goes to infinity, one can argue that, because the surface ``grows'', it will start
``looking like'' the hyperbolic plane $\IH$ and, in particular, the entire interval $[\frac 14, +
\infty)$ will be populated by eigenvalues.
More precisely, Huber proved in \cite{huber1974} that
\begin{equation}
  \label{eq:huber}
  \limsup_{g \rightarrow + \infty} \, \sup \{\lambda_1(X), X \in \mathcal{M}_g\} \leq \frac 14
\end{equation}
or, equivalently, $\lambda_1 \leq \frac 14 + o(1)$ as $g \rightarrow + \infty$. This means that the
value $\frac 14$ is asymptotically the largest possible spectral gap. Buser conjectured in
\cite{buser1984} that \eqref{eq:huber} is an equality, meaning there exists a sequence
$(X_n)_{n \geq 1}$ of compact hyperbolic surfaces of genus going to $+ \infty$ and such that
$\lim_{n \rightarrow + \infty} \lambda_1(X_n)=\frac 14$.  This very popular open question was
answered positively recently by Hide and Magee in a celebrated article \cite{hide2023a} using random
covers.

\subsection{Typical hyperbolic surfaces of high genus}

The purpose of these lectures is to present a proof of the following statement, the main result from
my two-part joint work with Nalini Anantharaman \cite{anantharaman2023,anantharaman2025}.

\begin{thm}
  \label{thm:dream}
  For any $\epsilon>0$, 
  \begin{equation*}
    \lim_{g \rightarrow + \infty} \Pwp{\lambda_1 \geq \frac 14 - \epsilon} = 1.
  \end{equation*}
\end{thm}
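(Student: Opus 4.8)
The plan is to follow the trace method, adapted from Friedman's proof of Alon's conjecture for random regular graphs to the Weil--Petersson setting. The first step is to reduce the spectral statement to a geometric one about short geodesics and to a counting problem. By Selberg's trace formula, information about eigenvalues near $\frac 14$ is encoded in sums over closed geodesics of the surface. Concretely, if $\lambda_1 < \frac 14 - \epsilon$, then writing $\lambda_1 = s_1(1-s_1)$ with $s_1 \in (\frac 12, 1)$, one gets a lower bound of the form $\sum_{\gamma} h_L(\ell_X(\gamma)) \gtrsim e^{s_1 L}$ for a suitable even test function $\widehat h_L$ supported on $[-L, L]$, where the sum runs over closed geodesics. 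So it suffices to show that, with probability tending to $1$, this geodesic sum is $o(e^{s_1 L})$ for every $s_1$ bounded away from $1$; choosing $L = L(g)$ growing slowly (logarithmically) in $g$ will make $\epsilon \to 0$ in the end. The main contribution one has to control is that of \emph{primitive non-simple} closed geodesics and of high iterates; the contribution of the cusp-like/short part is handled separately (there is no cusp here, but one must discard surfaces with a very short closed geodesic, whose WP-probability is small, cf. the collar lemma and the $O(\ell)$ volume asymptotics).

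The second step, which is the technical heart, is to compute the Weil--Petersson expectation of the geodesic sum. Here one invokes Mirzakhani's integration formula: for a geometric function $F$ summed over the mapping-class-group orbit of a fixed topological type of multi-curve, $\Ewp[g]{\sum_{\gamma \in \mathrm{MCG}\cdot\gamma_0} F(\ell_X(\gamma))}$ equals an explicit integral of $F$ against Mirzakhani's volume polynomials $V_{g',n'}$ of the complementary pieces, divided by $V_g$. Using Mirzakhani's asymptotics $V_{g,n}(\ell) \sim V_{g,n}\prod \frac{\sinh(\ell_i/2)}{\ell_i/2}$ and the estimate $\frac{V_{g-1,2}}{V_g} \to 1$, $\frac{V_{g',n'} V_{g'',n''}}{V_g} \to$ (explicit, typically small) ratios, one reduces each expectation to a manageable integral. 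The point is that, after this computation, the expected number of primitive closed geodesics of length $\leq L$ that bound a ``topologically simple'' configuration behaves like that of a random graph / the $(+)$-part, while all ``topologically complicated'' configurations (geodesics that fill subsurfaces of positive Euler characteristic, self-intersecting geodesics, tangles) contribute a total expectation that is $O(g^{-1})$ times something subexponential in $L$ — this is the analogue of Friedman's ``tangle-free'' bound. One then needs a high-moment version: to get convergence in probability rather than just in expectation, compute $\Ewp[g]{(\text{geodesic sum})^k}$ for $k$ growing slowly, again via Mirzakhani's formula applied to unions of multi-curves, and control the combinatorics of how $k$ copies of geodesics can overlap on the surface.

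The hard part, as in Friedman's work, will be organizing and bounding the sum over topological types. Each term in the $k$-th moment corresponds to a way of drawing $k$ (possibly coinciding, possibly intersecting) closed geodesics on $S_g$, i.e. to a ribbon graph / filling subsurface together with its genus and number of boundary components; one must enumerate these, bound the number of types of each combinatorial complexity, and show that the Mirzakhani volume ratios decay fast enough in the complexity to beat this count. The delicate cases are the configurations of intermediate complexity — not genuinely ``new'' topology, but not simple either — where one must extract the exact leading-order cancellation matching the spectral radius $\frac 14$; getting the constant right (rather than merely $\frac 14 - \epsilon$ with a bad dependence) is what forces the careful analysis and the slowly growing parameters $k$ and $L$. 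A secondary obstacle is that Mirzakhani's volume asymptotics have error terms that are only polynomial in $g^{-1}$ with constants depending on the boundary lengths; since $L$ grows, one must use uniform versions of these asymptotics (established in \cite{anantharaman2023}) valid in the range of lengths that actually occurs. Once the $k$-th moment is shown to be $e^{k s_1 L}(1+o(1))$ plus lower-order corrections, Markov's inequality applied to $(\text{geodesic sum})^k e^{-k s_1' L}$ for $s_1' $ slightly larger than any fixed $s_1<1$ closes the argument.
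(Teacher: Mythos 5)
Your broad-strokes picture — Selberg trace formula, Mirzakhani integration, a stratification by the topology of the filling subsurface, the analogy with Friedman's tangle count, and a careful balance of the test-function length $L \sim c\log g$ against the $1/g$ precision — is the right skeleton, and it is the one the paper follows. But there are two substantive gaps.

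First, and most importantly, the claim that topologically complicated configurations contribute ``$O(g^{-1})$ times something subexponential in $L$'' is false and would, if true, trivialize the problem. That claim is essentially the Wu--Xue estimate \eqref{eq:approx_s}, whose error is $e^{(1+\epsilon)\ell}/g$ — an \emph{exponential} error — and it yields only $\lambda_1 \geq \frac{3}{16}-\epsilon$. In reality, each coefficient $f_k^\type(\ell)$ in the expansion $V_g^\type(\ell)/V_g = \sum_k f_k^\type(\ell)/g^k + \ldots$ contains a large principal term of size $p(\ell)e^\ell$ coming from the pollution of the trivial eigenvalue $\lambda_0=0$; this principal term cannot be bounded away. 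The entire content of the proof is to show that after isolating this principal term the remainder is $O(\ell^N e^{\ell/2})$ — i.e., that $f_k^\type$ is a \emph{Friedman--Ramanujan function} in the sense of \cref{thm:FR_type} — for every $k$ and every non-simple local type~$\type$, with controlled norm. Your proposal does not identify this cancellation mechanism, nor the machinery (stability of the Friedman--Ramanujan class under the pseudo-convolutions of \cref{t:intermediate}, the new $(\vec{L},\vec\theta)$-coordinates of \cref{t:maindet}, the length formula of \cref{prop:form_l_gamma_try}) that proves it. Without it there is no route from Mirzakhani's formula to the $e^{\ell/2}$ gap that encodes $\lambda_1 \geq \frac 14 - \epsilon$.

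Second, the high-moment step you insert — computing $\Ewpo[g]\big[(\text{geodesic sum})^k\big]$ for $k$ growing slowly — is a genuine departure from the paper and is not needed. The paper (following Friedman, not Bordenave) works entirely with the \emph{first} moment: one proves that the averaged counting function, expanded to all orders in $1/g$, is Friedman--Ramanujan term-by-term and then concludes by a single application of Markov's inequality. The precision is achieved by going deep in powers of $1/g$, not by taking powers of the random variable. If you want to pursue your high-moment route you would have to independently control the combinatorics of $k$-tuples of interacting geodesics, which is a different (and in this setting untested) program; it is not a shortcut around the Friedman--Ramanujan analysis, since the first-moment principal-term issue reappears in every moment.
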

Here, $\Pwpo$ denotes the Weil--Petersson probability measure on the moduli space $\cM_g$, a
natural, continuous model popularized by Mirzakhani, which describes typical hyperbolic surfaces of
fixed genus $g$.  As a consequence, \cref{thm:dream} tells us that hyperbolic surfaces with a
near-optimal spectral gap not only exist: they are the typical behavior in the large genus limit.

This is the conclusion of a sequence of increasingly precise partial results which are listed in
\cref{tab:gaps}. It is worth pointing out that Mirzakhani's first uniform spectral gap was obtained
in \cite{mirzakhani2013} by quite a loose estimate using the Cheeger constant. All other results,
starting from the significant jump to $\frac{3}{16}-\epsilon$ by the two independent teams Wu--Xue
\cite{wu2022} and Lipnowski--Wright \cite{lipnowski2024}, use the same approach, called the
\emph{trace method}. The idea behind the trace method is discussed in \cref{sec:trace}, and we will
have the opportunity to discuss the reason behind these iterative improvements leading to the optimal
result. 

\def\arraystretch{1.5}
\begin{table}[h]
  \centering
  \begin{tabular}{|l|l|l|}
    \hline
    Gap obtained & Comparison with $\frac 14$ & Reference \\
    \hline
    $\lambda_1 \geq \frac 14 \big(\frac{\log 2}{\log 2+\pi}\big)^2
    $
    & $ \big(\frac{\log 2}{\log 2+\pi}\big)^2 \approx 0.03267$
    & \cite{mirzakhani2013} \\
    \hline
    $\lambda_1 \geq \frac{3}{16}-\epsilon$
    & $\frac{3}{16} = \frac 14 \times \frac 34$
    & \cite{wu2022} and \cite{lipnowski2024}  \\
    \hline
    $\lambda_1 \geq \frac 29 - \epsilon$
    & $\frac{2}{9} = \frac 14 \times \frac 89$  
                                              & \cite{anantharaman2023}\\
    \hline
    $\lambda_1 \geq \frac 14 - \epsilon$
                 & Optimal.
                                              & \cite{anantharaman2025}\\
    \hline
  \end{tabular}
  \caption{History of the spectral gap problem for random compact hyperbolic surfaces of high genus
    sampled with the Weil--Petersson probability measure.}
  \label{tab:gaps}
\end{table}

\begin{rem}[The Selberg $\frac 14$ conjecture]
  This exposition would not be complete without a quick mention of one of the biggest open problems
  in the spectral geometry of hyperbolic surfaces and arithmetics: the Selberg $\frac 14$
  conjecture. This is a question about \emph{congruence covers}, hyperbolic surfaces realized as
  the quotient of $\IH$ by the groups
  \begin{equation}
    \label{eq:cong}
    \Gamma(N) = \{ M \in \mathrm{PSL}_2(\Z) \, : \, M \equiv \mathrm{Id} \, (\text{mod } N)\}.
  \end{equation}
 For $N \geq 2$, the quotient $X(N) = \Gamma(N) \diagdown \IH$ is a finite-area
  non-compact hyperbolic surface, of genus behaving like $N^3$. Selberg observed in
  \cite{selberg1965} that, if one were to prove that congruence surfaces have a certain spectral
  gap, they could then obtain interesting cancellations in certain arithmetic sums. This motivated
  the now famous Selberg $\frac 14$ conjecture, first stated in \cite{selberg1965}:
  \begin{equation}
    \label{eq:selb_14}
    \forall N \geq 1, \quad \lambda_1(X(N)) \geq \frac 14.
  \end{equation}
  This is the analogue of the Riemann hypothesis for the Selberg zeta function. In his article,
  Selberg obtains the partial result $\lambda_1(X(N)) \geq \frac{3}{16}$ (my understanding is that
  those two $\frac{3}{16}$, from arithmetic v.s. random surfaces, are distinct). It is very
  interesting to me that, like in the case of random hyperbolic surfaces, any improvement on this
  bound in the spectral gap has implications in terms of cancellations in certain quantities:
  arithmetic sums for Selberg, and geometric averages for us. The current best know bound is due to
  Kim and Sarnak, who proved that $\lambda_1(N) \geq \frac 14 \left(1-\frac{49}{1024}\right)$ in
  \cite{kim2003}. Compared to the case of random hyperbolic surfaces, it seems extremely hard to
  find a cancellation mechanism that works all the way to the optimal gap $\frac 14$.

  Note that congruence covers are a countable set and the Weil--Petersson measure has no atoms, so
  \cref{thm:dream} implies nothing in the setting of the Selberg~$\frac 14$ conjecture. In a sense,
  these two approaches are orthogonal: we are studying random hyperbolic surfaces, averaging over all
  possible metrics and viewing the lengths of geodesics as somehow uniformly distributed; to the
  contrary, congruence covers are very rigid objects with high symmetries.
\end{rem}

\subsection{Link with $d$-regular graphs}
\label{sec:link-d-reg}

From the beginning of this research project, my collaborator Nalini Anantharaman and myself have
been drawing some of our inspiration from the world of $d$-regular graphs. A $d$-regular graph is a
graph such that every vertex has exactly $d$ neighbors.  These discrete objects, which have been
extensively studied in graph theory, behave in a remarkably similar way to hyperbolic
surfaces. Indeed, they are homogeneous (a small neighborhood of a point is always the same), and the
size of their balls grows exponentially fast in the radius. A little dictionary between these two
settings is provided in \cref{tab:comp_graph}. Note that, throughout these notes, we assume for the
sake of simplicity of the exposition that $G$ is not bipartite\footnote{A bipartite graph is a graph
  with vertice set which can be split into $V=V_1 \sqcup V_2$ so that each edge has one endpoint in
  $V_1$ and one in $V_2$; such a graph has an additional trivial eigenvalue $-d$ which has very
  little consequences to our discussions, but can be a bit cumbersome for writing.}. As in the case
of hyperbolic surfaces, we will be interested in the large-scale regime, i.e. we will consider that
the number $n$ of vertices of the graph is large.

\begin{table}[h]
  \centering
  \begin{tabular}{|c|c|c|}
    \hline
    Object & hyperbolic surface $X$ & $d$-regular graph $G=(V,E)$ \\
    \hline
    Size & genus $g \rightarrow + \infty$ & number of vertices $n \rightarrow + \infty$ \\
    \hline
    Local geometry & hyperbolic balls & each vertex has $d$ neighbors \\
    Size of $R$-balls
         & $\approx e^R$ & $\approx (d-1)^R$ \\
    \hline
    Operator
           & Laplacian $\Delta$
             & Adjacency matrix $A$ \\
    Spectrum & $0 = \lambda_0 < \lambda_1 \leq \ldots \rightarrow + \infty$
                                          & $d = \lambda_1 > \lambda_2 \geq \ldots \geq \lambda_n
                                            > -d$ \\
    Trivial eigenvalue
         & $\lambda_0=0$
                                    & $\lambda_1=d$\\
    Spectral gap & $\lambda_1$ & $d-\lambda_+, \lambda_+=\max \{\lambda_2,-\lambda_n\}$ \\
    \hline
    Universal cover & hyperbolic plane $\IH$ & $d$-regular tree \\
    and its spectrum & $[\frac 14, + \infty)$
                                           & $[-2\sqrt{d-1},2\sqrt{d-1}]$\\
    \hline
  \end{tabular}
  \caption{The comparison between hyperbolic surfaces and regular graphs.}
  \label{tab:comp_graph}
\end{table}

For a non-bipartite $d$-regular graph $G=(V,E)$ with $n$ vertices, we can study the spectrum
\begin{equation}
  \label{eq:sp_A}
  d = \lambda_1 > \lambda_2 \geq \ldots \geq \lambda_n > -d
\end{equation}
of the adjacency matrix (note that the adjacency matrix differs from the Laplacian matrix by
$d \, \mathrm{Id}$, because of the regularity of the graph, so they are the same objects). The
trivial eigenvalues is $\lambda_1=d$, corresponding to the constant eigenfunction. The equivalent of
the spectral gap is the spacing between the trivial eigenvalue $d$ and
$\lambda_+=\max\{\lambda_2,-\lambda_n\}$.

The equivalent of Huber's bound \eqref{eq:huber} for $d$-regular graphs is the Alon--Boppana bound
\cite{nilli1991,alon1986}, which states that $\lambda_2 \geq 2 \sqrt{d-1}-o(1)$ as
$n \rightarrow + \infty$. The value $2 \sqrt{d-1}$ plays the exact role of $\frac 14$ as it is the
edge of the spectrum of the universal cover of all $d$-regular graphs, the $d$-regular tree. Alon
famously conjectured in \cite{alon1986} that \emph{most} large $d$-regular graphs have a
near-optimal spectral gap. This was proven many years later by Friedman in a 120-pages proof
\cite{friedman2003}. There now exists several modern proofs, the first one by
Bordenave~\cite{bordenave2020}, and very recently, a completely new approach by Chen, Garza-Vargas,
Tropp and Van Handel \cite{chen2025}. Shortly after these lectures were given, this new approach has
been further extended by Magee, Puder and Van Handel \cite{magee2025}, to prove that random covers
of a compact hyperbolic surface typically have a near-optimal spectral gap in the high-degree limit.

In our attempts at proving \cref{thm:dream}, we have naturally converged to Friedman's method,
without first realizing it. Discovering that we were on a similar track shed a new light on
Friedman's work and allowed us to understand and appreciate it deeply; for this reason, we named
some of the objects introduced in this project in his honor. Some elements of Friedman's proof and
their relationship with ours will be discussed in \cref{sec:trace}.

\subsection{Open questions}
\label{sec:open-questions}

Whilst we cannot improve our result by replacing $\frac 14$ by a larger constant due to Huber's
result, there are a lot of finer interesting questions -- some of which can be expected to be very hard.

\begin{ques}
  Find a sequence $\epsilon_g \rightarrow 0^+$ such that
  \begin{equation}
  \lim_{g \rightarrow + \infty} \Pwp{\lambda_1 \geq \frac 14 - \epsilon_g}=1.    
  \end{equation}
  For which scaling regime (i.e. for which behavior of $\epsilon_g$ as a function of $g$) does this
  cease to be true?  Same question with $\Pwp{\lambda_1 \geq \frac 14 + \epsilon_g}$: for which
  scaling is this probability zero or non-zero?
\end{ques}

\begin{ques}
  \label{ques:TW}
  Prove that there is a number $p \in (0,1)$ such that
  \begin{equation}
    \label{eq:lambda14}
    \lim_{g \rightarrow + \infty} \Pwp{\lambda_1 \geq \frac 14} = p.
  \end{equation}
  What is the value of $p$?
\end{ques}

Both of these questions are inspired by the intuition that the statistics of the eigenvalues at the
edge of the bulk spectrum follow the Tracy--Widom distribution. Proving such a statement would be
extremely exciting but feels quite out of reach at this moment in time, by comparison with the world
of regular graphs. Indeed, \cref{ques:TW} was answered positively very recently, with a value
$p \approx 69\%$, by Huang, McKenzie and Yau \cite{huang2025}. The reason why these developments
come at a much later time than Friedman's proof is that the trace method is way too soft an approach
to go anywhere near this degree of precision.  Indeed, getting in the bowels of our proof and
expliciting everything, one might be able to obtain a $\epsilon_g$ decaying like a negative power of
$\log(g)$. However, by comparison with random matrix theory and the case of regular graphs, the
interesting scale in actually expected to be a negative power of $g$. The breakthroughs of Huang,
McKenzie and Yau rely on exiting new findings in the theory of Green functions, which do not, a
priori, seem to apply to hyperbolic surfaces.

\subsection{Organization of these lecture notes}

The lecture notes are organized in a structure compatible with the two lectures.  The aim of the
first talk is to introduce and motivate the result, and then discuss the notion of
\emph{Friedman--Ramanujan function} and its the stability by convolution; i.e., the contents of
Sections \ref{sec:intro} and \ref{sec:trace}. The second talk will then be entirely focused on the
study of the \emph{volume functions} which describe the distribution of the length of closed
geodesics on random hyperbolic surfaces; this corresponds to Section \ref{sec:volume}. 

\section{The trace method: regular graphs and hyperbolic surfaces}
\label{sec:trace}

Because the thought process behind our trace method and our cancellation argument has been presented
in details in \cite{anantharaman2023} as well as our exposition article \cite{anantharaman2024}, the
full explanation is not reproduced here, and we rather present the ideas a bit differently.
This will hopefully allow for us to spend more time on other aspects of the proof, which are new to
\cite{anantharaman2025}. 

\subsection{The trace method for regular graphs}
\label{sec:baby-trace-method}

Let $G = (V,E)$ be a non-bipartite $d$-regular graph with $n$ vertices. Recall that the spectrum of
the adjacency matrix $A$ is denoted as
\begin{equation}
  \label{eq:sp_G}
  d = \lambda_1 > \lambda_2 \geq \ldots \geq \lambda_n > -d
\end{equation}
and we define the spectral gap of $G$ to be $d-\lambda_+$, where
$\lambda_+=\max(\lambda_2,-\lambda_n)$.

\subsubsection{The trace method}
\label{sec:simple-trace-method}

The general idea behind trace methods is to relate the spectrum $(\lambda_j)_{1 \leq j \leq n}$ of
the graph $G$ to its geometry, e.g. its closed paths and their lengths. A simple way to do so, which
I find very enlightening, consists in picking an integer $\ell > 0$ and observing that:
\begin{equation}
  \label{eq:tr_G}
  \Tr(A^\ell) = \sum_{j=1}^n \lambda_j^\ell
  = \sum_{v \in V} \# \{\text{loops of length } \ell \text{ based at } v\}.
\end{equation}
The first equality is a trivial consequence of the fact that the matrix $A$ is diagonalizable.
The second equality is obtained by observing that the entries of the matrix $A^\ell$ exactly count
the number of paths of length $\ell$ between pairs of vertices. Because the trace sums over diagonal
elements, the paths we are counting are actually \emph{loops}, i.e. closed paths.

We can deduce from \eqref{eq:sp_G} and \eqref{eq:tr_G} the following bound true for the
loop-counting function
\begin{equation}
  \label{eq:as_G_1}
  \Big|\sum_{v \in V} \# \{\text{loops of length } \ell \text{ based at } v\}
  - d^\ell \Big| \leq n \lambda_+^\ell
\end{equation}
which provides us with its asymptotic behavior as $\ell \gg 1$.  In other words, the leading-order
term for the geometric count of loops is of the form $d^\ell$, and related to the contribution of
the trivial eigenvalue $\lambda_1=d$. The spectral gap is then visible in the spacing between the
growth of the leading and sub-leading terms. In particular, if I were to find a constant $\rho>0$
such that
\begin{equation}
  \label{eq:as_G_2}
  \Big|\sum_{v \in V} \# \{\text{loops of length } \ell \text{ based at } v\}
  - d^\ell \Big| \ll \rho^\ell
\end{equation}
as $n \rightarrow + \infty$, then I could deduce that $\lambda_+ \leq \rho$ or, in
other words, I would get a lower bound on the spectral gap of my graph. The quality of the spectral
gap is related to the size of the $\rho$ we achieve: the smaller the $\rho$ the tighter the estimate
and the better the gap.

\subsubsection{Ramanujan functions}
\label{sec:ramanujan-functions}

Let us now get improve this trace method, and discuss a few ideas present in Friedman's proof of the
Alon conjecture, namely that, for any $\epsilon >0$, typical large $d$-regular graphs satisfy
$\lambda_+ \leq 2 \sqrt{d-1}+\epsilon$.

The geometric quantity that Friedman counts is \emph{irreducible loops}, i.e. closed paths on the
graph $G$ which never go back-and-forth along an edge in two consecutive steps. We define the
counting function
\begin{equation*}
  N_G^{\mathrm{irr}}(\ell) := \# \{ \text{irreducible loops of length } \ell \text{ on } G\}.
\end{equation*}
Because, at each step of the path, we now only have $d-1$ options for the next step, instead of $d$,
the growth-rate of this quantity is $(d-1)^\ell$ rather than $d^\ell$.

Now, the heuristics above can be made precise in the following manner. Friedman defined the
following class of functions, which he calls Ramanujan functions, and should be reminiscent of
equation \eqref{eq:as_G_2}. 

\begin{defi}[{\cite{friedman2003}}]
  A function $f : \Z_{>0} \rightarrow \C$ is said to be a $d$-\emph{Ramanujan function} if there exists
  a polynomial function $p$ and a constant $c$ such that
  \begin{equation*}
    \forall \ell \in \Z_{>0}, \quad
    |f(\ell) - p(\ell) (d-1)^\ell| \leq c \, \ell^c (d-1)^{\ell/2}.
  \end{equation*}
\end{defi}

Lubotzky, Phillips and Sarnak observed in \cite{lubotzky1988} that, if the counting function
$N_G^{\mathrm{irr}}(\ell)$ is $d$-Ramanujan, then $\lambda_+ \leq 2 \sqrt{d-1}$. Such graphs, of
optimal spectral gap, are called \emph{Ramanujan graphs} -- this motivates the choice of the name.

The term $p(\ell) (d-1)^\ell$ is called the \emph{principal term}; it is the contribution we
naturally expect from the trivial eigenvalue $\lambda_1=d$. The crucial feature in this estimate is
the \emph{gap} in the exponents, from $(d-1)^\ell$ to $(d-1)^{\ell/2}$. It is in the size of this
gap that the optimal spectral gap result is created: a stronger estimate could not be true for large
$n$, whilst a weaker bound would only allow to obtain a sub-optimal spectral gap.

Notice that the requirement to be a Ramanujan function is a very strong requirement: we ask for a
very specific expansion, with a big leading term of a prescribed nature, and a much, much smaller
remainder. The principal term will always cloud our estimates and, in the following, a lot of
 effort is dedicated to dealing with the pollution generated by the trivial eigenvalue.

\subsubsection{Adding probabilities to the mix}
\label{sec:prob-trace-meth}

This is all very good, but there is \emph{no way} we could prove that the counting function
$N_G^{\mathrm{irr}}(\ell)$ is $d$-Ramanujan for a fixed, general $d$-regular graph $G$ (contrarily
to Lubotzky, Phillips and Sarnak, who were working with specific families of graphs with algebraic
structure in \cite{lubotzky1988}). We now need to use our probabilistic viewpoint and averaging
techniques. We introduce the \emph{average counting function}
\begin{equation*}
  N_{d,n}^{\mathrm{irr}}(\ell) := \mathbb{E}_{d,n}[N^{\mathrm{irr}}_G(\ell)]
\end{equation*}
where the expectation is defined with respect to a model for random $d$-regular graphs with $n$
vertices, which we shall not specify.

By an application of Markov's inequality, if one were to prove that the average counting function
$N^{\mathrm{irr}}_{d,n}(\ell)$ is a $d$-Ramanujan function, one could deduce that typical
$d$-regular graphs have a near-optimal spectral gap.

Going from a single counting function to an averaged version is a very powerful move. In this new
setting, the role of the large-scale asymptotic $n \rightarrow + \infty$ becomes much clearer: we
need to study the behavior of the functions $N^{\mathrm{irr}}_{d,n}(\ell)$ for $d$ fixed and
$n \gg 1$. Friedman proved, for his models of random $d$-regular graphs, that we have an asymptotic
expansion of the form
\begin{equation*}
  N^{\mathrm{irr}}_{d,n}(\ell)
  = \sum_{k=0}^{\ord} \frac{f_{d,k}(\ell)}{n^k} + \O[d,K]{\frac{\ell^{4K+2}(d-1)^{\ell}}{n^{K+1}}}.
\end{equation*}
The spectral gap question is then intimately related to the coefficients
$(f_{d,k}(\ell))_{k \geq 0}$ and their behavior as a function of $\ell$. For instance, in
\cite{friedman1991}, Friedman proved that, for $k \leq \sqrt{d-1}/2$, the function $f_{d,k}(\ell)$
is $d$-Ramanujan. He then deduces that $\lambda_+ \leq 2 \sqrt{d-1}+2\log(d)+C$ typically, for a
universal constant $C$.
Reaching the optimal spectral gap and proving that $\lambda_+\leq 2 \sqrt{d-1} + \epsilon$ requires
understanding \emph{all} of the coefficients $(f_{d,k}(\ell))_{k \geq 0}$, i.e. estimating the
counting function $N_{d,n}^{\mathrm{irr}}(\ell)$ with arbitrary precision as $n \rightarrow + \infty$.

\subsubsection{Restriction to fixed loop topologies}
\label{sec:restr-fixed-loop}

So, now, we have a new goal! We ``just'' have to prove that, for all $k$, the function
$f_{d,k}(\ell)$ is a Friedman--Ramanujan function. Unfortunately... This is false. Friedman proves
in \cite[Theorem 2.12]{friedman2003} that, for his model of random $d$-regular graphs, there exists
$k$ such that $f_{d,k}(\ell)$ is \emph{not}
$d$-Ramanujan. 
The proof speaks to the strength of the method: basically, if all $(f_{d,k}(\ell))_{k \geq 0}$ were
$d$-Ramanujan functions, then we could prove very good bounds on the probability of having a small
spectral gap, decaying fast as $n \rightarrow + \infty$. However, one can prove that the probability
of having a small spectral gap goes to zero somehow slowly, using poorly connected examples. This
contradicts the fact that the coefficients of the counting functions are all $d$-Ramanujan.

This is a very technical aspect of the proof, 
 presented here for a specific reason: to motivate the introduction of type-specific counting
functions. Indeed, one of the steps that need to be taken to solve the issue foreshadowed above is
to split the counting function by \emph{types}:
\begin{equation}
  N_{d,n}^{\mathrm{irr}}(\ell) = \sum_{\type} N_{d,n}^\type(\ell)  
\end{equation}
where now $\type$ denotes a ``type'' of irreducible loop, and $N_{d,n}^\type(\ell)$ counts loops of
this type. Examples of types include simple (loops with no self-intersection), or the figure-eight
(loops with exactly one self-intersection).

These restricted counting functions are much better-behaved than the overall counting function
$N_{d,n}^{\mathrm{irr}}(\ell)$: Friedman proves that all of the terms of their asymptotic expansions
in powers of $1/n$ are, indeed, $d$-Ramanujan functions. Actually, the reason why the coefficients
of the overall counting function $N_{d,n}^{\mathrm{irr}}(\ell)$ are not $d$-Ramanujan function is an
exponential proliferation of the number of types $\type$ in the sum above, related to the surfaces
with a small spectral gap. So, whilst each term admits the correct asymptotic expansion, the
number of terms grows too fast to sum them.

\subsection{The heroes of our trace method}
\label{sec:heroes-our-trace}

Let us now return to the focus of these lectures, compact hyperbolic surfaces. All the aspects
presented in \cref{sec:baby-trace-method} are shared between $d$-regular graphs and hyperbolic
surfaces -- the definitions of the objects being easier in the graph case due to the discrete
setting. I will ask you to accept a little jump here, and trust that the discussion of the trace
method can be adapted (more details can be found in \cite{anantharaman2023,anantharaman2024}). The
trace formula we use is the Selberg trace formula~\cite{selberg1956}, which reads
\begin{equation}
  \label{eq:tr_selb}
  \sum_{j=0}^{+ \infty} \hat{h}(r_j)
  = (g-1) \int_\R r \hat{h}(r)  \tanh(\pi r) \d r
  + \sum_{\gamma \in \cG(X)}
  \sum_{k=1}^{+ \infty}
  \frac{\ell_X(\gamma) \, h(k \ell_X(\gamma))}{2 \sinh \div{k \ell_X(\gamma)}}
\end{equation}
where $h$ is an appropriate test function, $\hat{h}$ its Fourier transform and $r_j$ is a complex
number such that $\lambda_j = \frac 14 + r_j^2$.  As in the case of graphs, this formula relates the
spectrum $(\lambda_j)_{j \geq 0}$ of the Laplacian to a geometric counting function, here, counting
primitive closed geodesics $\gamma \in \cG(X)$.

The trace method sketched above for graphs translates here to a new objective: we wish to establish
a \emph{Ramanujan property} for certain \emph{average geometric counting functions}. Let us make
sense of these two notions, the heroes of our approach.

\subsubsection{Weil--Petersson volume functions}
\label{sec:volume-functions-1}

First, we introduce our volume functions, which will be the equivalent of the counting functions
$N_{d,n}^\type(\ell)$. The study of the volume functions is at the heart of our approach, and
arguably one of the significant elements of novelty in our work. We provided completely new closed
expressions and asymptotic results for them in \cite{anantharaman2023,anantharaman2025}, which we
will present in more detail in \cref{sec:volume}.

Let $g \geq 2$, and $\Volwp$ denote the Weil--Petersson measure on the moduli space
\begin{equation}
  \label{eq:moduli}
  \cM_g = \{\text{compact hyperbolic surfaces of genus } g\} \diagup \{\text{isometries}\}.
\end{equation}
The Weil--Petersson probability $\Pwpo$ is defined by renormalization of the measure $\Volwp$,
dividing by its total mass $V_g$. The associated expectation is
\begin{equation}
  \Ewpo[g] [\phi]:=\frac{1}{V_g} \int_{\cM_g} \phi(X) \d \Volwp(X).
\end{equation}

In \cite{anantharaman2023}, we introduced a ``counting'' function $V_g^{\mathrm{all}}(\ell)$ which
allows to study the distribution of the length spectrum on a random hyperbolic surface of
genus~$g$. 

\begin{prp}[{\cite{anantharaman2023}}]
  For any $g \geq 2$, there exists a unique locally integrable function $V_g^{\mathrm{all}} :
  \R_{> 0} \rightarrow \R_{\geq 0}$ such that, for any test function $F$,
  \begin{equation}
    \av{F} := \Ewp{\sum_{\gamma \in \cG(X)} F(\ell_X(\gamma))}
    = \frac{1}{V_g} \int_0^{+ \infty} F(\ell) V_g^{\mathrm{all}}(\ell) \d \ell.
  \end{equation}
\end{prp}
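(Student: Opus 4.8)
The plan is to peel the infinite sum over $\cG(X)$ into topological strata, evaluate each stratum with Mirzakhani's integration formula, and then reassemble. Uniqueness is immediate: two locally integrable functions $\R_{>0}\to\R_{\ge 0}$ that pair identically against every continuous compactly supported $F$ agree almost everywhere. So I would first fix $F\ge 0$ continuous with support in a compact subinterval of $\R_{>0}$ and prove the identity there; the general case then follows by monotone convergence for $F\ge 0$ and by linearity afterwards, all quantities being finite once $V_g^{\mathrm{all}}$ is known to be locally integrable.

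For the existence of $V_g^{\mathrm{all}}$, note that the primitive closed geodesics of any hyperbolic $X$ on $S_g$ are parametrised, independently of the metric, by the fixed countable set of primitive free homotopy classes of closed curves on $S_g$, on which the mapping class group $\MCG_g$ acts with $\ell_{\phi\cdot X}(\phi\cdot c)=\ell_X(c)$. Partitioning this set into $\MCG_g$-orbits --- the \emph{types} $\type$ --- gives $\cG(X)=\bigsqcup_{\type}\cG_{\type}(X)$, and for each fixed type the partial sum $X\mapsto\sum_{\gamma\in\cG_{\type}(X)}F(\ell_X(\gamma))$ is a nonnegative measurable $\MCG_g$-invariant function on Teichm\"uller space, hence descends to $\cM_g$; Tonelli then gives
\[
  \Ewp{\sum_{\gamma\in\cG(X)}F(\ell_X(\gamma))}=\sum_{\type}\Ewp{\sum_{\gamma\in\cG_{\type}(X)}F(\ell_X(\gamma))}.
\]
For a single type represented by a curve $c_0$, its geodesic representative fills a subsurface $\Sigma\subseteq S_g$ with multicurve boundary, and cutting along $\partial\Sigma$ leaves $\Sigma$ together with a complementary surface of simpler topology. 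Mirzakhani's integration formula --- in the version valid for arbitrary, not necessarily simple, closed curves, obtained by unfolding over the Fenchel--Nielsen coordinates attached to $\partial\Sigma$ and applying the coarea formula to the proper positive function $Y\mapsto\ell_Y(c_0)$ on the moduli space of $\Sigma$ with geodesic boundary --- then yields a nonnegative locally integrable function $V_g^{\type}$ on $\R_{>0}$, built from the (polynomial) Weil--Petersson volumes of the complementary pieces integrated over their boundary lengths against an explicit kernel recording how a geodesic of type $\type$ of total length $\ell$ sits inside $\Sigma$, such that
\[
  \Ewp{\sum_{\gamma\in\cG_{\type}(X)}F(\ell_X(\gamma))}=\frac{1}{V_g}\int_0^{+\infty}F(\ell)\,V_g^{\type}(\ell)\,\d\ell .
\]
Closed formulas for these stratum functions, and their behaviour as $g\to+\infty$, are exactly the subject of \cref{sec:volume}.

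Setting $V_g^{\mathrm{all}}:=\sum_{\type}V_g^{\type}$, the two displays give the asserted identity once this series converges in $L^1_{\mathrm{loc}}(\R_{>0})$, equivalently once $\sum_{\type}\int_a^b V_g^{\type}(\ell)\,\d\ell<+\infty$ for all $0<a<b$ --- that is, once the expected number of primitive closed geodesics of length in $[a,b]$ on a Weil--Petersson random surface of genus $g$ is finite. The hard part is exactly this estimate. The simple types contribute only finitely many, explicitly polynomial, terms, since up to $\MCG_g$ a simple closed curve on $S_g$ is either non-separating or cuts off a fixed genus; but infinitely many non-simple types meet a given interval $[a,b]$, for instance those whose representative winds many times around a short geodesic before making a bounded excursion. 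One controls these from Mirzakhani's formula itself: the $O(t^2)$ Weil--Petersson cost of creating a geodesic of length $t$ to wind around (visible in the weights of the Fenchel--Nielsen coordinates), combined with Mirzakhani's volume bounds $V_{g,n}(x_1,\dots,x_n)\le V_{g,n}\prod_{i}\tfrac{\sinh(x_i/2)}{x_i/2}$ and the known growth of $V_{g,n}$, dominates the proliferation of winding types once summed. This is the continuous-geometry counterpart of the very phenomenon --- exponential proliferation of loop topologies --- that in Friedman's setting prevents the full counting function from being Ramanujan, and taming it is precisely what forces the detailed analysis of the volume functions carried out later. Granting it, $V_g^{\mathrm{all}}$ is nonnegative as a sum of nonnegative functions, locally integrable, and unique by the first paragraph, which is the claim.
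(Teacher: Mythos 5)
Your strategy mirrors the paper's (decompose by $\MCG$-type, unfold with a Mirzakhani-style integration formula, apply coarea, resum), and the uniqueness step is exactly right. Two remarks, though. First, you single out local finiteness of the pushforward measure --- the convergence of $\sum_{\type}\int_a^b V_g^{\type}$ --- as ``the hard part'' and propose to control it with Mirzakhani's volume bounds and the $O(t^2)$ collar weight; but for fixed $g$ this is actually elementary and requires none of that machinery. Buser's uniform counting bound (Theorem~6.6.4 in his book) gives $\#\{\gamma \in \cG(X):\ell_X(\gamma)\le L\}\le (2g-2)e^{L+6}$ for \emph{every} $X\in\cM_g$, so $\Ewp{\#\{\gamma:\ell_X(\gamma)\in[a,b]\}}$ is trivially finite, and the Tonelli resummation over types converges in $L^1_{\mathrm{loc}}$ with no type-by-type estimate at all. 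The proliferation-of-types phenomenon you invoke is a genuine obstruction in the $g\to\infty$ analysis (and to the $d$-Ramanujan property, as the paper explains), but not to the fixed-$g$ statement here. Second, the ``Mirzakhani integration formula valid for arbitrary closed curves'' that produces each $V_g^{\type}$ by unfolding over the boundary of the filled subsurface is not an off-the-shelf tool: it is precisely the construction of \cite[\S4--5]{anantharaman2023} restated here as \cref{thm:express_average}, so invoking it as given in a blind proof is somewhat circular. The mathematically honest route is: Buser's bound for local finiteness; real-analyticity, properness, and nonvanishing gradient of each length function $X\mapsto\ell_X(\gamma)$ on Teichm\"uller space for absolute continuity via coarea; Tonelli to sum the densities. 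That yields the proposition without presupposing the heavier unfolding machinery, which is then developed separately to obtain the \emph{explicit} expressions for $V_g^{\type}$.
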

\begin{rem}
  Throughout this text, by \emph{test function $F$}, we mean a measurable function
  $F: \R_{\geq 0} \rightarrow \C$ which is bounded and compactly supported.
\end{rem}
The overall volume function $V_g^{\mathrm{all}}(\ell)$ is the analogue of the overall counting
function $N_{d,n}^{\mathrm{irr}}(\ell)$ for random hyperbolic surfaces. It counts the number of
primitive closed geodesics of length $\ell$ on a random hyperbolic surface of genus $g$. 

As in the case of graphs, the function $V_g^{\mathrm{all}}(\ell)$ is too wild to study as is, and we
benefit greatly by decomposing it according to the topologies of the geodesics we wish to study:
\begin{equation}
  \label{eq:vall_type}
  V_g^{\mathrm{all}}(\ell) = \sum_{\type} V_g^\type(\ell)
\end{equation}
where the sum runs over all local types $\type$ of closed geodesics on a surface of
genus~$g$. Precise definitions of the notion of local types are postponed until
\cref{sec:volume}. For now, we can think of a local type as the topology of the geodesic itself (we
do not care about the way it is embedded in the surface of genus~$g$). Examples of local types are
the simple local type (geodesics with no self-intersections), and the figure-eight (geodesics with
exactly one self-intersection).

Any local type $\type$ has a absolute Euler characteristic $\chi(\type) \geq 0$, which is equal to
$0$ for the simple local type only.  Building on Mirzakhani's integration and topological recursion
formulae, we prove in \cite{anantharaman2023} that, as in the case of graphs, the functions
$V_g^\type(\ell)$ admit asymptotic expansions in powers of~$1/g$:
\begin{equation}
  \label{eq:vtype_exp}
  \frac{V_g^\type(\ell)}{V_g}
  = \sum_{k=\chi(\type)}^{\ord} \frac{f_k^\type(\ell)}{g^k} + \O[\epsilon,\ord,\chi(\type)]{\frac{e^{(1+\epsilon)\ell}}{g^{\ord+1}}}.
\end{equation}
The coefficients $(f_k^\type(\ell))_{k \geq 0}$ are the main characters of this story. 

\subsubsection{Friedman--Ramanujan functions}
\label{sec:friedm-raman-funct}

The second hero of our adventure is the notion of Friedman--Ramanujan function, which we introduced
in \cite{anantharaman2023}. 

\begin{defi}
  For $\rK, \rN$ two integers, we define the set $\cF^{\rK,\rN}$ as the set of locally integrable
  functions $f : \R_{\geq 0} \rightarrow \C$ such that there exists a polynomial function $p$ of degree
  $<\rK$ and a constant $c$ such that:
  \begin{equation}
    \label{eq:FR}
    \forall \ell \geq 0,
    \quad 
    |f(\ell) - p(\ell) e^\ell| \leq c (\ell+1)^{\rN-1} e^{\ell/2}.
  \end{equation}
  Such a function is called a \emph{Friedman--Ramanujan function} (of degree $(\rK,\rN)$).
  The set $\cF^{\rK,\rN}$ is equipped with the norm
  \begin{equation}
    \|f\|_{\cF^{\rK,\rN}}
    = \|p\|_{\infty} + \sup_{\ell \geq 0} \frac{|f(\ell) - p(\ell) e^\ell|}{(\ell+1)^{\rN-1} e^{\ell/2}},
  \end{equation}
  where $\|p\|_{\infty}$ is the $\ell^\infty$-norm on the space of polynomials. The set of
  \emph{Friedman--Ramanujan remainders} $\cR^{\rN} \subset \cF^{\rK,\rN}$ is defined as
  $\cF^{0,\rN}$, i.e. as the set of Friedman--Ramanujan functions with $p=0$.
\end{defi}

We similarly define weak spaces $\cF^{\rK,\rN}_w$ and $\cR^{\rN}_w$ equipped with a weak norm
$\|\cdot \|_{\cF^{\rK,\rN}}^w$ by replacing the definition \eqref{eq:FR} by an integrated version:
\begin{equation}
  \label{eq:FRw}
  \forall \ell \geq 0,
  \quad 
  \int_0^\ell|f(s) - p(s) e^s| \d s \leq c (\ell+1)^{\rN-1} e^{\ell/2}.
\end{equation}
As the name suggests, the strong definition implies the weak one.  Remark that in the work of
Friedman, where the functions are defined on $\Z_{> 0}$ instead of $\R_{\geq 0}$, this distinction
between weak and strong definition does not exist.

We are now ready to state the key argument we use to prove our spectral gap result.

\begin{thm}\label{thm:FR_type}
  Let ${\mathbf{T}}$ be a local type other than ``simple''. Then, for any $k \geq 0$, the function
  $\ell \mapsto f_k^\type(\ell)$ is a Friedman--Ramanujan function in the weak sense. Furthermore,
  its degree and norm can be bounded by a constant depending only on the order $k$ and
  $\chi(\type)$.
\end{thm}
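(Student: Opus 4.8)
The plan is to prove \cref{thm:FR_type} by leveraging the explicit closed-form expressions for the volume functions $V_g^\type(\ell)$ promised in \cref{sec:volume}, and in particular the structure of their $1/g$-expansion coefficients $f_k^\type(\ell)$. The first step is to recall that each local type $\type$ other than ``simple'' has absolute Euler characteristic $\chi(\type) \geq 1$, and that by \eqref{eq:vtype_exp} the expansion starts at order $k = \chi(\type)$; so for $k < \chi(\type)$ the statement is trivial ($f_k^\type = 0$), and we may assume $k \geq \chi(\type) \geq 1$. The core of the argument should be that Mirzakhani's integration formula, combined with the topological recursion, expresses $f_k^\type(\ell)$ as a finite sum — indexed by the ways of completing the geodesic of type $\type$ to a surface with boundary $\ell$ and by the combinatorics of the recursion — of explicit integrals of polynomial-times-exponential type. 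One then wants to see that each such building block is a Friedman--Ramanujan function, and that the Friedman--Ramanujan property is stable under the operations (addition, multiplication by polynomials, and the relevant convolutions) that assemble these blocks.

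The second step, therefore, is to establish the needed closure properties of the spaces $\cF^{\rK,\rN}_w$ and $\cR^{\rN}_w$. Closure under addition and under multiplication by a fixed polynomial is immediate from the definition \eqref{eq:FRw}, with control of degree and norm. The crucial closure property is stability under convolution: if $f \in \cF^{\rK,\rN}_w$ and $g$ belongs to a suitable class (e.g. $\cR^{\rN}_w$, or a kernel of controlled exponential growth coming from the geometry of pairs of pants), then $f * g$ — or the twisted convolution dictated by the gluing of hyperbolic pieces — again lies in $\cR^{\rN'}_w$ for some controlled $\rN'$. Heuristically this is exactly the phenomenon that, in Friedman's discrete setting, makes irreducible-loop counts on fixed topologies Ramanujan: the principal term $p(\ell)e^\ell$ comes only from the ``trivial'' contribution, and any genuine topological feature (an intersection, a handle) costs a factor of $g^{-1}$ and, on the level of $\ell$-dependence, forces the exponential rate down from $e^\ell$ to $e^{\ell/2}$. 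I would prove the convolution lemma by splitting the integral $\int_0^\ell f(s) g(\ell - s)\,ds$ at the midpoint: on each half one of the two factors is being evaluated on an argument $\geq \ell/2$, and either the subexponential remainder bound or a direct estimate on the kernel gives the $e^{\ell/2}$-type bound; the weak (integrated) formulation is what makes this robust against the local integrability issues.

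The third step is the bookkeeping: one must check that, for a given type $\type$ and order $k$, only finitely many building blocks appear, with degrees and norms bounded in terms of $k$ and $\chi(\type)$ alone (uniformly in $g$). This is where the restriction to a \emph{fixed} local type is essential — exactly as in \cref{sec:restr-fixed-loop}, the overall function $V_g^{\mathrm{all}}$ fails to be Friedman--Ramanujan only because the number of types proliferates, but each individual $V_g^\type$ is tame. Concretely, I expect this to follow from the fact that the topological recursion terminates in a number of steps controlled by the Euler characteristic of the relevant bordered surface, and that the polynomials produced by integrating Mirzakhani's volume polynomials against the recursion kernels have degree bounded by that same Euler characteristic.

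The main obstacle, I expect, is precisely the convolution stability lemma in the weak sense — in particular, controlling the principal term under convolution so that one does not spuriously generate a principal term of degree too large, or an $e^\ell$-contribution where only $e^{\ell/2}$ is allowed. In the assembly of $f_k^\type$, one convolves several factors, some of which carry a genuine $e^\ell$ principal term (coming from the ``collar'' directions that do not see any topology) and some of which are pure remainders in $\cR^{\rN}_w$; the bookkeeping of which convolutions upgrade a pair of principal terms into a larger principal term versus which ones must land in the remainder space is delicate, and getting the degree count $<\rK$ exactly right — with $\rK$ depending only on $k$ and $\chi(\type)$ — is the technical heart. A secondary difficulty is handling the small-$\ell$ regime and the local integrability of the kernels near the boundary of moduli space (short pinched geodesics), which is exactly why the theorem is stated in the weak sense; I would dispatch this by absorbing any integrable singularity into the $\int_0^\ell$ in \eqref{eq:FRw} rather than trying to control $f_k^\type$ pointwise.
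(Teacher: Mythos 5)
Your three-step outline (integral expressions for $f_k^\type$, closure properties of $\cF^{\rK,\rN}_w$, bookkeeping) matches the paper's high-level structure, but the crucial mechanism is missing. You treat the assembling operation as an ordinary convolution, gesturing at a ``twisted convolution dictated by the gluing'' without developing it, and then prove a convolution lemma by splitting $\int_0^\ell f(s)g(\ell-s)\,\mathrm{d}s$ at the midpoint. That is precisely the approach the paper says cannot be pushed past $\chi(\type)\le 1$: the geodesic length of a non-simple loop is \emph{not} a sum of lengths of pieces (already for the figure-eight one has $\cosh(\ell/2)=2\cosh(x_1/2)\cosh(x_2/2)+\cosh(x_3/2)$), so the relevant operation is a \emph{pseudo}-convolution on the nonlinear level set $\{\ell_Y(\curve)=\ell\}\subset\cT^*_{\g,\n}$. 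Making that tractable is the entire content of \cref{sec:volume}: one needs (i) the integral expression of \cref{thm:express_average}, (ii) the new adapted coordinates $(\vec L,\vec\theta)$ and Jacobian of \cref{t:maindet}, (iii) the trace-formula expression for $\ell_Y(\curve)$ in these coordinates (\cref{prop:form_l_gamma_try}) to see the height function as a small perturbation of $\sum_q\theta_q$ in the class $\cE_n^{(a)}$, and (iv) a pseudo-convolution stability theorem in the spirit of \cref{t:intermediate}, refined to accommodate the indicator of the domain $\domain$ and the ``crossings'' where some $\theta_q<0$ (handled by partitions of unity and modified coordinates). None of this appears in your proposal.

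There is also a concrete flaw in the convolution lemma you do sketch. You claim that for $f\in\cF^{\rK,\rN}_w$ and $g\in\cR^{\rN'}_w$ (or a controlled kernel), $f*g$ lands in a remainder space $\cR$. That is false: writing $f(t)=p(t)e^t+r(t)$, the cross term
\begin{equation*}
  \int_0^\ell p(t)e^t\,g(\ell-t)\,\mathrm{d}t
  = e^\ell\int_0^\ell p(\ell-u)e^{-u}g(u)\,\mathrm{d}u
  = e^\ell\,\tilde p(\ell) + \O{(\ell+1)^{c}e^{\ell/2}}
\end{equation*}
produces a genuine principal term $\tilde p(\ell)e^\ell$ (since $e^{-u}g(u)$ is integrable, the integral converges to a polynomial in $\ell$ of degree $<\rK$). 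So $f*g\in\cF$, not $\cR$, and the split-at-midpoint estimate, which only produces $e^{\ell/2}$ bounds, cannot see this. The paper sidesteps the issue entirely via \cref{p:charFR}: characterize $\cF^{\rK,\rN}$ by $\cL^\rK f\in\cR^\rN$, use the commutation $\cL^{\rK_1+\rK_2}(f_1*f_2)=\cL^{\rK_1}f_1*\cL^{\rK_2}f_2$, and reduce to the (easy) remainder case. This is not merely a cleaner proof of the same lemma; it is the mechanism that generalizes, with nontrivial extra terms, to pseudo-convolutions. Your bookkeeping step would also need to confront the fact that tracking principal terms through iterated pseudo-convolutions is exactly where the degree count $<\rK$ becomes delicate --- and the $\cL^\rK$ characterization is what makes that bookkeeping possible.
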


The simple local type is singled out in this statement because its functions have a pole of order
$1$ at $0$ and therefore need to be multiplied by $\ell \mapsto \ell$.  Even though this is not the
method used in \cite{wu2022,lipnowski2024}, the probabilistic spectral gap
$\lambda_1 \geq \frac{3}{16}-\epsilon$ is related to the validity of \cref{thm:FR_type} for the
simple local type.  We prove \cref{thm:FR_type} in the case $\chi(\type) \in \{0, 1\}$ in
\cite{anantharaman2023}, and this is one of the main steps of our proof of the probabilistic bound
$\lambda_1 \geq \frac 29 - \epsilon$. The proof of \cref{thm:FR_type} in all generality is the bulk
of our new article \cite{anantharaman2025}.

\subsection{Stability of the Friedman--Ramanujan property by convolution}
\label{sec:stab-friedm-raman}

Let us conclude this section by presenting a fundamental idea on which our proof of
\cref{thm:FR_type} rests: the stability of the Friedman--Ramanujan hypothesis by convolution.

\subsubsection{The case of the usual convolution}
\label{sec:case-usual-conv}

Friedman mentions in \cite{friedman2003} that the class of $d$-Ramanujan functions is stable by
convolution. It is also the case of our Friedman--Ramanujan hypothesis, and we can make the
following elementary observation.

\begin{prp}
  \label{prp:stab_conv}
  If $f_i \in \cF^{\rK_i,\rN_i}$ for $i \in \{1,2\}$, then the convolution
  \begin{equation*}
    f_1 * f_2(\ell) = \int_0^\ell f_1(\ell_1) f_2(\ell-\ell_1) \d \ell_1
  \end{equation*}
  belongs in $\cF^{\rK,\rN}$ for $\rK=\rK_1+\rK_2$ and $\rN=\rN_1+\rN_2$.
\end{prp}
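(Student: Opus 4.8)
The plan is to write $f_i(\ell) = p_i(\ell) e^\ell + r_i(\ell)$ where $p_i$ has degree $<\rK_i$ and $|r_i(\ell)| \leq c_i (\ell+1)^{\rN_i - 1} e^{\ell/2}$, substitute into the convolution integral, and expand bilinearly into four pieces: the ``principal $\times$ principal'' term $\int_0^\ell p_1(\ell_1) e^{\ell_1} p_2(\ell-\ell_1) e^{\ell-\ell_1} \d \ell_1$, the two ``principal $\times$ remainder'' cross terms, and the ``remainder $\times$ remainder'' term. The claim will then follow by identifying the principal part of $f_1 * f_2$ as coming from the first piece, and bounding the other three.

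First I would handle the principal term: pulling out $e^\ell$, it equals $e^\ell \int_0^\ell p_1(\ell_1) p_2(\ell - \ell_1) \d \ell_1$, and the integral of a product of polynomials of degrees $<\rK_1$ and $<\rK_2$ over $[0,\ell]$ is a polynomial $p(\ell)$ in $\ell$ of degree $< \rK_1 + \rK_2 = \rK$ (the integrand is a polynomial in $\ell_1$ and $\ell$ of degree $\leq (\rK_1-1)+(\rK_2-1)$, and integrating in $\ell_1$ from $0$ to $\ell$ raises the $\ell$-degree by at most one, giving degree $\leq \rK_1 + \rK_2 - 1 < \rK$). This $p$ is the polynomial in the Friedman--Ramanujan expansion of $f_1 * f_2$, and its $\ell^\infty$-norm is controlled by $\|p_1\|_\infty \|p_2\|_\infty$ times a combinatorial constant depending only on $\rK_1, \rK_2$. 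Next I would bound a cross term such as $\int_0^\ell p_1(\ell_1) e^{\ell_1} r_2(\ell - \ell_1)\d\ell_1$: using $|p_1(\ell_1)| \leq \|p_1\|_\infty (\ell_1+1)^{\rK_1-1}$ and the bound on $r_2$, the integrand is at most $\|p_1\|_\infty c_2 (\ell_1+1)^{\rK_1 - 1}(\ell-\ell_1+1)^{\rN_2-1} e^{\ell_1} e^{(\ell-\ell_1)/2} = \|p_1\|_\infty c_2 (\ell_1+1)^{\rK_1-1}(\ell-\ell_1+1)^{\rN_2-1} e^{\ell/2} e^{\ell_1/2}$; integrating $e^{\ell_1/2}$ against polynomial factors over $[0,\ell]$ produces something of size $\O{(\ell+1)^{\rK_1 + \rN_2 - 2} e^{\ell/2}}$, which is dominated by $(\ell+1)^{\rN - 1} e^{\ell/2}$ since $\rK_1 + \rN_2 - 2 \leq \rK_1 + \rK_2 + \rN_1 + \rN_2 - 2 \leq \rN - 1$ (using $\rK_i, \rN_i \geq 1$). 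The remainder $\times$ remainder term is easier: its integrand is at most $c_1 c_2 (\ell_1+1)^{\rN_1-1}(\ell-\ell_1+1)^{\rN_2-1} e^{\ell/2}$, and integrating over $[0,\ell]$ gives $\O{(\ell+1)^{\rN_1+\rN_2-1} e^{\ell/2}} = \O{(\ell+1)^{\rN-1} e^{\ell/2}}$.

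Assembling these, $f_1 * f_2(\ell) - p(\ell) e^\ell$ is bounded by $c (\ell+1)^{\rN-1} e^{\ell/2}$ with $c$ an explicit function of the $\|p_i\|_\infty$ and $c_i$, hence controlled by $\|f_1\|_{\cF^{\rK_1,\rN_1}} \|f_2\|_{\cF^{\rK_2,\rN_2}}$ up to a constant depending only on the $\rK_i, \rN_i$; this simultaneously shows $f_1 * f_2 \in \cF^{\rK,\rN}$ and gives the norm bound. Local integrability of $f_1 * f_2$ is immediate since each $f_i$ is locally integrable (indeed the above bounds show $f_1 * f_2$ is bounded on compacts). I do not expect any serious obstacle here — the statement is genuinely elementary — the only point requiring a little care is the bookkeeping on polynomial degrees in the principal term (to land strictly below $\rK$) and checking that the crude exponents $\rK_i + \rN_j - 2$ arising in the cross terms never exceed the allotted $\rN - 1$; both follow from $\rK_i, \rN_i \geq 1$. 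One could alternatively, and perhaps more slickly, reduce to the case $f_i(\ell) = \ell^{a_i} e^\ell$ and $f_i(\ell) = \ell^{a_i} e^{\ell/2}$ by linearity and the triangle inequality for the norm, and compute $\int_0^\ell \ell_1^{a} e^{\alpha \ell_1} (\ell-\ell_1)^{b} e^{\beta(\ell-\ell_1)}\d\ell_1$ for $(\alpha,\beta) \in \{\tfrac12, 1\}^2$ directly; I would mention this as the conceptual reason the proof is ``elementary'' but carry out the four-piece estimate above as the actual argument.
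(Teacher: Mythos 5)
Your decomposition of the cross terms is wrong: they are \emph{not} remainders. Consider $\int_0^\ell p_1(\ell_1) e^{\ell_1} r_2(\ell-\ell_1)\d\ell_1$. The integrand is indeed bounded by $\|p_1\|_\infty c_2 (\ell_1+1)^{\rK_1-1}(\ell-\ell_1+1)^{\rN_2-1}e^{\ell/2}e^{\ell_1/2}$, but integrating in $\ell_1$ over $[0,\ell]$ does not kill the $e^{\ell_1/2}$ factor: $\int_0^\ell e^{\ell_1/2}\d\ell_1\sim 2e^{\ell/2}$, so the cross term is of size $(\ell+1)^{\rK_1-1}e^\ell$, \emph{not} $(\ell+1)^{\rN-1}e^{\ell/2}$ as you claim. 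That is the same size as the principal term, and indeed it must be: substituting $u=\ell-\ell_1$ gives $e^\ell\int_0^\ell p_1(\ell-u)e^{-u}r_2(u)\d u$, and since $u\mapsto q(u)e^{-u}r_2(u)$ is absolutely integrable on $[0,\infty)$ for any polynomial $q$, the inner integral converges as $\ell\to\infty$ to a nonzero constant for each power of $\ell$ in the expansion of $p_1(\ell-u)$. So the cross term contributes a genuine polynomial-times-$e^\ell$ piece to the principal part of $f_1*f_2$, plus a remainder coming from $\int_\ell^\infty q(u)e^{-u}r_2(u)\d u=\O{(\ell+1)^{\rN_2+\rK_1-2}e^{-\ell/2}}$. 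The same happens for the symmetric cross term. To repair your argument you must extract these contributions, add them to the polynomial $p$, and check the resulting degree is still $<\rK$ (it is: the $j$-th coefficient comes from $\ell^j$-terms in $p_1(\ell-u)$ with $j<\rK_1\leq\rK$).

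This is exactly the ``not completely obvious'' subtlety the paper flags when remarking that a direct term-by-term computation is possible but delicate. The paper's own proof sidesteps the cross-term extraction entirely by working with the operator $\cL=\mathrm{Id}-\cP$: the characterization $f\in\cF^{\rK,\rN}\Leftrightarrow\cL^\rK f\in\cR^\rN$ combined with the commutation identity $\cL^{\rK_1+\rK_2}(f_1*f_2)=(\cL^{\rK_1}f_1)*(\cL^{\rK_2}f_2)$ reduces the whole proposition to the remainder-times-remainder case, which is the one-line bound you handled correctly. The operator approach has the further virtue of generalizing to the pseudo-convolutions of \cref{sec:pseudo-conv-stab}, which is why the paper adopts it; your direct route is workable and more elementary, but only once the principal parts hidden in the cross terms are accounted for.
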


We can prove this by a straightforward (but not completely obvious) computation, examining each term
appearing as we replace $f_1$, $f_2$ by their approximations in the convolution. However, there is a
more elegant way to prove this stability, which we shall present as this is the way we proceed in
more complex cases.

The first observation is that \cref{prp:stab_conv} is a one-line upper bound in the case where both
functions are Friedman--Ramanujan remainders:
\begin{equation*}
  |f_1 * f_2(\ell)|
  \leq c_1 c_2  \int_0^\ell (\ell_1+1)^{\rN_1-1} (\ell-\ell_1+1)^{\rN_2-1} e^{\frac {\ell} 2} \d \ell_1
  \leq c_1c_2 (\ell+1)^{\rN-1} e^{\frac \ell 2}.
\end{equation*}
This is actually an important point to make: the hypothesis $\cR^{\rN}$ is quite a simple property,
as it is established by an upper bound. To contrast, the definition of $\cF^{\rK,\rN}$ is
substantially more challenging, as we wish to prove that the function has a very specific asymptotic
behavior, up to a high degree of precision.

The second key observation is the characterization of the space of Friedman--Ramanujan functions as
integral equations. It relies on the key characterization of the exponential function as the kernel
of the differential operator $\partial - \mathrm{Id}$, and more generally, functions of the form
$p(\ell)e^\ell$ for a polynomial $p$ as the kernel of its iterates.  We transform these differential
equations into integral equations to avoid dealing with regularity issue, defining two operators
acting on locally integrable functions, $\cP=\cP_x=\int_0^x$ (taking the primitive vanishing at $0$)
and $\cL=\cL_x=\mathrm{Id}-\cP_x$. It is trivial, nevertheless useful, to note that $\cP$ and $\cL$
preserve $\cF^{\rK,\rN}$ and $\cR^{\rN}$, as well as their weak versions. We have the following.

\begin{lem}\label{p:charFR} For $\rK$, $\rN$ two integers and $f : \R_{\geq 0} \rightarrow \R$
  locally integrable,
  \begin{equation*}
    f\in \cF^{\rK,\rN}
    \quad \Leftrightarrow \quad
    \cL^{\rK} f\in \cR^{\rN}.
    \end{equation*}
  \end{lem}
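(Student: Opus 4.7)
The plan is to prove both implications by combining the explicit inverse of $\cL=\mathrm{Id}-\cP$ with two elementary stability facts. First, $\cP$ maps $\cR^\rN$ to itself up to a universal multiplicative constant, since
\[
\int_0^\ell (t+1)^{\rN-1}e^{t/2}\d t\leq 2(\ell+1)^{\rN-1}e^{\ell/2};
\]
hence so do $\cL$ and all its powers. Second, $\cL$ is a bijection on locally integrable functions: injectivity because $\cL f=0$ forces $f'=f$ with $f(0)=0$, and surjectivity via the explicit formula
\[
\cL^{-1}g(\ell)=g(\ell)+e^\ell\int_0^\ell e^{-t}g(t)\d t,
\]
which one checks directly by differentiation and the boundary condition $\cL^{-1}g(0)=g(0)$.

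For the direction $f\in\cF^{\rK,\rN}\Rightarrow \cL^\rK f\in\cR^\rN$, I would split $f=p(\ell)e^\ell+r(\ell)$ with $\deg p<\rK$ and $r\in\cR^\rN$, and handle the two summands separately. The remainder $r$ stays in $\cR^\rN$ under $\cL^\rK$ by the stability fact above. For the principal term, one integration by parts yields the identity $\cL(\ell^j e^\ell)=j\,\cP(\ell^{j-1}e^\ell)$ for $j\geq 1$, and $\cP(\ell^{j-1}e^\ell)$ expands (by repeated IBP) as a polynomial of degree $j-1$ times $e^\ell$ plus a polynomial constant term. Hence each application of $\cL$ strictly decreases the degree of the polynomial multiplying $e^\ell$ while only adding polynomial summands. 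Iterating $\rK$ times on $p(\ell)e^\ell$ with $\deg p<\rK$ eliminates the exponential part entirely and leaves a polynomial, which lies trivially in $\cR^\rN$.

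For the converse, I would induct on $\rK$. The base case $\rK=0$ is immediate since $\cF^{0,\rN}=\cR^\rN$ by definition. For the inductive step, the hypothesis $\cL^\rK f\in\cR^\rN$ rewrites as $\cL^{\rK-1}(\cL f)\in\cR^\rN$, so by the inductive hypothesis $\cL f=q(\ell)e^\ell+r_0(\ell)$ with $\deg q<\rK-1$ and $r_0\in\cR^\rN$. Injectivity then gives $f=\cL^{-1}(q(\ell)e^\ell)+\cL^{-1}(r_0)$. Plugging $g=q(\ell)e^\ell$ into the formula for $\cL^{-1}$ yields the clean identity
\[
\cL^{-1}(q(\ell)e^\ell)=\Bigl(q(\ell)+\int_0^\ell q(t)\d t\Bigr)e^\ell,
\]
a function of the form $P(\ell)e^\ell$ with $\deg P=\deg q+1<\rK$. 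For the remainder piece, I would decompose
\[
e^\ell\int_0^\ell e^{-t}r_0(t)\d t=Ce^\ell-e^\ell\int_\ell^{+\infty}e^{-t}r_0(t)\d t,\qquad C:=\int_0^{+\infty}e^{-t}r_0(t)\d t,
\]
where absolute convergence of $C$ is guaranteed by $|e^{-t}r_0(t)|\leq c(t+1)^{\rN-1}e^{-t/2}$. Summing the two contributions, $f$ takes the form $\bigl(P(\ell)+C\bigr)e^\ell+\tilde r(\ell)$ with a polynomial of degree $<\rK$ and $\tilde r\in\cR^\rN$, as required.

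The main technical step is the tail estimate needed to recognize $e^\ell\int_\ell^{+\infty}e^{-t}r_0(t)\d t$ as a remainder: one must show that $\int_\ell^{+\infty}(t+1)^{\rN-1}e^{-t/2}\d t=O\bigl((\ell+1)^{\rN-1}e^{-\ell/2}\bigr)$, which follows from the substitution $t=\ell+u$ and the elementary bound $(\ell+1+u)^{\rN-1}\leq 2^{\rN-1}\bigl((\ell+1)^{\rN-1}+u^{\rN-1}\bigr)$, reducing the tail integral to a Gamma-type constant times $(\ell+1)^{\rN-1}e^{-\ell/2}$. Everything else is routine bookkeeping on polynomial degrees and constants, and the weak version of the lemma follows by running the same argument with the integrated norm in place of the pointwise one.
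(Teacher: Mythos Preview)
Your proof is correct and follows essentially the same approach as the paper, which only offers a one-line sketch invoking ``standard ODE techniques and integral upper bounds'' together with the fact that polynomials times $e^\ell$ are annihilated (modulo polynomials) by $\cL^{\rK}$. You have faithfully unpacked that sketch: the explicit inverse $\cL^{-1}g(\ell)=g(\ell)+e^\ell\int_0^\ell e^{-t}g(t)\,\d t$, the degree-lowering identity $\cL(\ell^j e^\ell)=j\,\cP(\ell^{j-1}e^\ell)$, and the tail decomposition $e^\ell\int_0^\ell=Ce^\ell-e^\ell\int_\ell^\infty$ are exactly the ``integral upper bounds'' and ``ODE techniques'' the paper has in mind.
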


  \begin{proof}
    We use standard ODE techniques and integral upper bounds, together with the fact that functions
    $p(\ell)e^\ell$ where $p$ is a polynomial of degree $< \rK$ form the kernel of the integral
    operator $\cL^{\rK}$.
  \end{proof}

  The third, and last, key ingredient is the following commutation relationship between the usual
  convolution and the integral operator $\cL$:  
  \begin{align}\label{e:easyconvo}
    \cL^{\rK_1+\rK_2} (f_1 * f_2)= \cL^{\rK_1} f_1 * \cL^{\rK_2} f_2.
  \end{align}
  We are now ready to prove the stability of the Friedman-Ramanujan hypothesis by convolution.

  \begin{proof}
    If $f_i\in \cF^{\rK_i,\rN_i}$, then $\cL^{\rK_i} f_i \in \cR^{\rN_i}$ by \cref{p:charFR}. Thus,
    using the stability in the case of Friedman--Ramanujan remainders, we obtain that
    \begin{equation}
      \cL^{\rK} (f_1 * f_2) = (\cL^{\rK_1}f_1) * (\cL^{\rK_2}f_2)\in \cR^{\rN},
    \end{equation}
    implying that $f_1 * f_2\in \cF^{\rK,\rN}$ by \cref{p:charFR} again, which is our claim.
  \end{proof}

  \subsubsection{But... Why are we talking about convolutions?}

  Let us give a vague and informal idea of the relevance of the notion of convolution to our
  problem. Recall that the functions we wish to study to prove a spectral gap result are average
  geometric counting functions, which count loops of a fixed topology on random objects. For
  instance, in the case of graphs, we will count all figure-eights of length $\ell$ on a random
  graph. It is then natural to see such a figure-eight as a concatenation of two simple loops, of
  respective lengths $\ell_1$ and $\ell_2$, such that $\ell_1+\ell_2=\ell$; see
  \cref{fig:conv_graph}. In a sense, this means that the counting function for the figure-eight will
  ``look like'' a sum of counting functions for pairs of simple loops of respective lengths $\ell_1$
  and $\ell - \ell_1$, where $\ell_1$ varies between $0$ and~$\ell$; in other words, a discrete
  convolution.

  \begin{figure}[hh]
    \centering
    \includegraphics{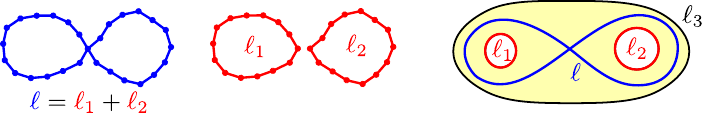} 
    \caption{Breaking the figure-eight in simpler loops, for graphs and hyperbolic surfaces.}
    \label{fig:conv_graph}
  \end{figure}

  As a consequence, in order to prove that the counting function for a figure-eight (or a more
  complicated topology) is a Ramanujan function, one could try and write it as a convolution of
  Ramanujan functions, and use the stability of this class. This becomes even more useful as we
  consider very intricate topologies, as required by the optimal spectral gap problem.

  In the case of hyperbolic surfaces, the situation is the same, but much more complex, and these
  differences are one of the core challenges which have no counterpart in the case of graphs. One
  can think that passing from discrete $\ell \in \Z_{>0}$ to continuous $\ell \in \R_{\geq 0}$ is a
  difficulty, which it is. However, the main difficulty is actually that we only count \emph{closed
    geodesics} rather than any closed loop. For instance, the figure-eight of length $\ell$ is not
  decomposed in two closed geodesics of lengths summing to $\ell$. One needs an additional geometric
  parameter to determine the geometry of \cref{fig:conv_graph} in the case of hyperbolic surfaces
  (e.g. the length $\ell_3$ of the third boundary component of the pair of pants). In this case, we
  have
  \begin{equation}
    \label{eq:ell_pop}
    \cosh \div{\ell} = 2 \cosh \div{\ell_1} \cosh \div{\ell_2} + \cosh \div{\ell_3},
  \end{equation}
  a nice enough formula, but arguably more complex than $\ell=\ell_1+\ell_2$.

  In \cite{anantharaman2023}, we proved that the volume function of the figure-eight can actually be
  viewed like a convolution of Friedman--Ramanujan functions, with an additional angle parameter
  (the angle at the intersection point of the eight). It is hard to see how this approach can be
  generalized to arbitrary complicated topologies. In \cref{sec:expr-length-funct}, we will provide
  general expressions for length functions which are similar to \eqref{eq:ell_pop}.  Importantly, we
  notice that if $\ell_1, \ell_2 \gg 1$ and $\ell_3$ is fixed, we have $\ell \approx \ell_1+\ell_2$
  in \eqref{eq:ell_pop}. Our expressions will satisfy similar properties, which will allow us
  compare our the volume functions with genuine convolutions of Friedman--Ramanujan functions, in a
  precise sense.
  
\subsubsection{Pseudo-convolutions and stability}
\label{sec:pseudo-conv-stab}

Due to the challenges brought to light in the previous paragraph, we need to define a notion of
pseudo-convolution, generalizing the usual notion of convolution. 

\begin{defi}
  We define the \emph{pseudo-convolution} of the functions $(f_i)_{1 \leq i \leq n}$ with level
  function $h$ and weight $\varphi$  by letting
   \begin{align*}
   f_1\star \ldots \star f_n|^h_\varphi(\ell) \d \ell
     = \int_{h(\x)=\ell}
     \varphi(\x) \prod_{i=1}^n f_i(x_i) \frac{\d \x}{\d \ell}
   \end{align*}
   where $\frac{\d \x}{\d \ell}$ denotes the volume induced by the Lebesgue measure on $\R^n$ on the
   level-set $\{\x \in \R_{\geq 0}^n \, : \, h(\x)=\ell\}$ of the height function $h$.
 \end{defi}
 This definition makes sense under reasonable assumptions on the functions involved. Notably, we
 want the level-sets of $h$ above to have a nice structure: we want that we can write the solution
 $x_1$ of the equation $h(x_1, \ldots, x_n)=\ell$ as a nice function of $\ell, x_2, \ldots, x_n$ on
 its domain of definition, and that we satisfy the hypotheses of the inverse function theorem so as
 to write $\frac{\d \x}{\d \ell} = \frac{\partial x_1}{\partial \ell} \d x_2\ldots \d x_n$ on the
 domain of the integral (and equivalent expressions for any choice of an expressed variable).

 Note that the case of the usual convolution corresponds to $h(\x)=x_1+\ldots+x_n$ and
 $\varphi \equiv 1$.  So, here, we allow for our convolution to no longer take place on a level-set
 $x_1 + \ldots + x_n=\ell$, but a more general kind of level-set, and we further introduce a
 weight. We will see such integrals naturally appearing in \cref{sec:writing-as-an}.

 This notion of pseudo-convolution is very wide and might not resemble an actual convolution at all,
 for general $h$ and $\varphi$. However, we will work under additional hypotheses on $h$ and
 $\varphi$, quantifying precisely how far away the $(h,\varphi)$-convolution is from the usual
 convolution. We define the following classes of functions.
  
 \begin{defi} \label{def:Ena} Let $a>0$.
   \begin{itemize}
   \item We denote as $E_n^{(a)}$ the space of $\mathcal{C}^\infty$ functions
     $\varphi : [a, +\infty)^n \rightarrow \R$ such that for every multi-index
     $\alpha\in \{0, 1\}^n$ with $\alpha\not=(0,\ldots, 0)$,
     \begin{equation}
       \sup_{x \in [a, +\infty)^n}\Big\{ e^{\alpha \cdot x}|\partial^\alpha \varphi(x)|\Big\} <+\infty.
     \end{equation}
   \item We let $\cE_n^{(a)}$ be the  space of functions $h : [a, + \infty)^n \rightarrow \R$
     such that the function $ (x_1, \ldots, x_n) \mapsto h(x_1, \ldots, x_n) - \sum_{j=1}^n x_j $
     belongs in $E_n^{(a)}$.
\end{itemize}
\end{defi}

We prove the following stability statement for the Friedman--Ramanujan hypothesis (under the natural
hypotheses on the functions $h$ and $\varphi$ allowing the $(h,\varphi)$-pseudo-convolution to be
well-defined).
 
\begin{thm} \label{t:intermediate} Let $a>0$. We assume that $h\in \cE_n^{(a)}$ and 
  $\varphi\in E_n^{(a)}$.  If $(f_j)_{1 \leq j \leq n}$ are continuous functions in $\cF$, then
  $ f_1\star \ldots \star f_n |^h_\varphi\in \cF$.
 \end{thm}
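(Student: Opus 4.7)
My plan is to mimic, as closely as possible, the three-step strategy used in the paper for the usual convolution (\cref{prp:stab_conv}): reduce to the remainder case via the characterization of \cref{p:charFR}, establish the remainder case by a direct exponential bound, and finally set up a commutation identity between $\cL$ and the pseudo-convolution. The first step is a generalisation of the identity \eqref{e:easyconvo}: I want to show that $\cL^{\rK_1+\ldots+\rK_n}$ applied to $f_1\star\ldots\star f_n|^h_\varphi$ can be expressed as a finite sum of pseudo-convolutions in which each $f_j$ has been replaced by $\cL^{\rK_j}f_j$, at the cost of possibly modifying the level function $h$ and the weight $\varphi$, while keeping them inside the classes $\cE_n^{(a)}$ and $E_n^{(a)}$. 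Once this commutation is established, since $f_j\in \cF^{\rK_j,\rN_j}$ implies $\cL^{\rK_j}f_j\in \cR^{\rN_j}$ by \cref{p:charFR}, everything boils down to the remainder case.

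For the remainder case, i.e. $f_j\in \cR^{\rN_j}$, I would parametrise the level set $\{h(\x)=\ell\}$ by $(x_2,\ldots,x_n)$ with $x_1=x_1(\ell,x_2,\ldots,x_n)$, so that the pseudo-convolution reads
\begin{equation*}
f_1\star\ldots\star f_n|^h_\varphi(\ell)=\int \varphi(\x)\prod_{j=1}^n f_j(x_j)\,\frac{\partial x_1}{\partial \ell}\,\d x_2\cdots\d x_n.
\end{equation*}
The definition of $\cE_n^{(a)}$ says $\tilde{h}:=h-\sum x_j$ and its partial derivatives are bounded (and the derivatives decay exponentially); hence on the level set $\sum x_j=\ell-\tilde h(\x)=\ell+\mathcal{O}(1)$, and moreover $\partial_\ell x_1=1/\partial_{x_1}h=1+\mathcal{O}(e^{-x_1})$ is bounded. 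Similarly $\varphi\in E_n^{(a)}$ is bounded. Plugging in the pointwise remainder bound $|f_j(x_j)|\leq c_j(x_j+1)^{\rN_j-1}e^{x_j/2}$ gives
\begin{equation*}
|f_1\star\ldots\star f_n|^h_\varphi(\ell)|\leq C\,e^{\ell/2}\int_{\{\sum x_j\leq \ell+\mathcal{O}(1)\}}\prod_{j=1}^n(x_j+1)^{\rN_j-1}\,\d x_2\cdots\d x_n,
\end{equation*}
which is bounded by $C'(\ell+1)^{\rN-1}e^{\ell/2}$ for $\rN=\sum\rN_j+n-1$. So pseudo-convolutions of remainders are remainders.

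The main obstacle, in my view, is the commutation step. For ordinary convolution it is essentially the Leibniz-type rule \eqref{e:easyconvo}, which uses the translation invariance of $h(\x)=\sum x_j$. Here, when we differentiate $F(\ell)=f_1\star\ldots\star f_n|^h_\varphi(\ell)$ with respect to $\ell$, we pick up a main term corresponding to differentiating the $f_j$'s plus error terms arising from derivatives of $\varphi$ and of the Jacobian $\partial x_1/\partial \ell$. These correction factors decay exponentially in $x_j$ precisely because of the defining condition of $E_n^{(a)}$, so they can be absorbed into modified weights $\varphi'\in E_n^{(a)}$ and level functions $h'\in \cE_n^{(a)}$, and each correction raises the number of remainder factors in the integrand. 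Expressing $\cL_\ell=\text{Id}-\cP_\ell$ and iterating this commutation $\rK:=\rK_1+\ldots+\rK_n$ times, $\cL^{\rK} F$ becomes a finite sum of pseudo-convolutions whose integrand factors are all in some $\cR^{\rN_j}$ (after application of the suitable $\cL^{\rK_j}$'s), with weights and level functions still in $E_n^{(a)}$ and $\cE_n^{(a)}$ respectively. Applying the remainder bound to each such term gives $\cL^{\rK}F\in \cR^{\rN'}$ for some $\rN'$, hence $F\in \cF^{\rK,\rN'}\subset \cF$ by \cref{p:charFR}, completing the proof.
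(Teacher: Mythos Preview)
Your overall strategy is correct and coincides with the paper's proof: reduce to the remainder case via \cref{p:charFR}, handle that case by the direct exponential bound you describe (this part matches the paper essentially verbatim, including the comparison estimate $\sum_j x_j\leq \ell+l_0$ and the boundedness of $\varphi$ and $\partial x_1/\partial\ell$), and establish a commutation identity between $\cL$ and the pseudo-convolution.

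The commutation step, however, is oversimplified in your sketch in a way that hides the actual mechanism. In the paper's precise formula the level function $h$ is \emph{never} modified; instead one obtains a finite sum of terms $\cL^t\cP^{t'}\big(\partial^{\pi_1}\Phi_1\star\cdots\star\partial^{\pi_n}\Phi_n\big|^h_{\partial^{\pi_0}\varphi}\big)$, where each $\Phi_j$ is one of four explicit combinations of $f_j$ with $\partial_{x_j}h$ or $1-\partial_{x_j}h$. Crucially, it is \emph{not} true that ``each correction raises the number of remainder factors'': one of the four types, namely $\Phi_j=\cP_{x_j}\big(\cL^t_{x_j}f_j\cdot\partial_{x_j}h\big)$ with $t<\rK_j$, still grows like $(x_j+1)^{\rN_j}e^{x_j}$. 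What saves the estimate is that this case forces $j\in\pi_0$, so the differentiated weight $\partial^{\pi_0}\varphi$ contains a factor $\partial_{x_j}\varphi$, which by the $E_n^{(a)}$ hypothesis is bounded by a constant times $e^{-x_j}$ and absorbs the excess. The correct claim is therefore not that each $\Phi_j$ is individually a remainder, but that the full product $\partial^{\pi_0}\varphi\prod_j\partial^{\pi_j}\Phi_j$ satisfies the bound $C\prod_j(x_j+1)^{\rN_j}e^{x_j/2}$, with the weight supplying the missing decay. Your assertion that the modified weights remain in $E_n^{(a)}$ is likewise imprecise: $\partial^{\pi_0}\varphi$ need not lie in $E_n^{(a)}$ (that class only controls mixed first-order derivatives), and only the pointwise bound on the product is actually used.
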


 Whilst the statement itself is not per se used in the proof of \cref{thm:dream}, its proof is
 extremely similar to the technique we use to establish \cref{thm:FR_type}. We therefore sketch it
 here.

 \begin{proof}
   The proof follows the same line as the proof for the usual convolution, albeit more elaborate due
   to the differences between the two operations and the more cumbersome definition here.
   More precisely, for each $1 \leq j \leq n$, we let $(\rK_j,\rN_j)$ be the Friedman--Ramanujan
   degree of $f_j$ so that $f_j \in \cF^{\rK_j,\rN_j}$. By \cref{p:charFR},
   $\cL^{\rK_j} f_j \in \cR^{\rN_j}$. For $\rK=\sum_{j=1}^n\rK_j$ and $\rN = \sum_{j=1}^n \rN_j$, we
   prove that the image of the pseudo-convolution $f_1 \ast \ldots \ast f_n|_\varphi^h$ by the
   operator $\cL^\rK$ belongs in $\cR^{\rN+n}$ which means, by \cref{p:charFR} again, that
   $f_1 \ast \ldots \ast f_n|_\varphi^h \in \cF^{\rK,\rN+n}$. For convenience, we assume without
   loss of generality that $\rK_j \leq \rN_j$ for each $1 \leq j \leq n$.

   Let us first, as in the case of the usual convolution, deal with the case where $\rK_j=0$ for all
   $j$, i.e. the case where all functions are Friedman--Ramanujan remainders. The proof is a simple
   upper bound.     Indeed, for $\ell \geq 0$, by definition,
   \begin{equation}
     \label{eq:upb_FR_conv}
      f_1 \ast \ldots \ast f_n|_\varphi^h(\ell)
     = \int \varphi(x_1, \ldots, x_n)
     \prod_{j=1}^n f_j(x_j) \;\frac{\partial x_1}{\partial \ell}
     \d x_2 \ldots \d x_n
  \end{equation}
  where the integral runs over the set of values $x_2, \ldots, x_n$ such that the equation
  $h(x_1, \ldots, x_n) = \ell$ has a solution $x_1 = x_1(\ell,x_2, \ldots, x_n)$.  Part of our global
  assumptions on $h$ and $\varphi$ are that $\big|\frac{\partial x_1}{\partial \ell}\big|$ and
  $|\varphi|$ are uniformly bounded.  We furthermore observe that, if $h \in \cE_n^{(a)}$, then
  there exists $l_0>0$ such that
  \begin{equation}
    \label{eq:CE}
     \sum_{j=1}^n x_j \leq h(x_1, \ldots, x_n) + l_0.
   \end{equation}
   We call such an inequality a comparison estimate.
   Applying the Friedman--Ramanujan hypothesis for each $f_j$, we obtain a bound of the form
   \begin{equation}
     \label{eq:appl_FR_0}
     \Big|\prod_{j=1}^n f_j(x_j)\big|
     \leq C \prod_{j=1}^n (x_j+1)^{\rN_j-1} e^{x_j/2}
   \end{equation}
   which implies, using the comparison estimate \eqref{eq:CE},
   \begin{equation}
     \Big|\prod_{j=1}^n f_j(x_j)\big|
     \leq C' (\ell+1)^{\rN-n} e^{\ell/2}
   \end{equation}
   on the level-set $h(x_1, \ldots, x_n)=\ell$, where $C'$ is a constant depending on $l_0$. Then,
   putting everything together, we can conclude that
   \begin{equation}
     \label{eq:upb_FR_conv2}
    \big| f_1 \ast \ldots \ast f_n|_\varphi^h(\ell)\big|
    \leq C'' (\ell+1)^{\rN-n} e^{\ell/2} \int_{\sum_{j=2}^n x_j \leq \ell + l_0} \d x_2 \ldots \d x_n,
  \end{equation}
  which leads to the claim since the integral above is bounded by $(\ell+l_0)^{n-1}$.
   
  Let us now proceed to the general case. In order to do so, we describe the way the operator $\cL$
  acts on pseudo-convolutions, and generalize \eqref{e:easyconvo} to the case of
  pseudo-convolutions.  The main thing is to make sure to break down the terms appearing so that we
  can prove the desired bound on each term. We prove
   \begin{equation}
     \cL^\rK(f_1 \ast \ldots \ast f_n|_\varphi^h)
     = \sum_{\pi, t, t'} 
     \cL^{t}\cP^{t'} (\partial^{\pi_1}\Phi_1 \ast \ldots \ast \partial^{\pi_n} \Phi_n
     |_{\partial^{\pi_0}\varphi}^{h})
   \end{equation}
   where the sum runs over a set of values $(\pi,t,t')$ as follows:
   \begin{itemize}
   \item the integers $t$ and $t'$ are so that $0 \leq t + t' \leq \rK$;
   \item $\pi = (\pi_0, \ldots, \pi_n)$ is a family of disjoint subsets of $\{1, \ldots, n\}$ such
     that $j \notin \pi_j$ for each $1 \leq j \leq n$ and, for all $j$,
     $\partial^{\pi_j} := \prod_{i \in \pi_j} \partial_{x_i}$;
   \item each function $\Phi_j$ is obtained from $f_j$ by one of the following transformations:
     \begin{itemize}
     \item we may have $\Phi_j = \cL_{x_j}^{\rK_j} f_j \times \partial_{x_j} h$;
     \item or $\Phi_j =  f_j \times (1 - \partial_{x_j} h)$;
     \item or $\Phi_j = \cP_{x_j}\big(\cL^t_{x_j} f_j \times (1-\partial_{x_j} h)\big) \times \partial_{x_j} h$ for an
       integer $0 \leq t < \rK_j$;
     \item or $\Phi_j = \cP_{x_j} \big( \cL^t_{x_j} f \times \partial_{x_j} h\big)$ for a
       $0 \leq t < \rK_j$, in which case $j \in \pi_0$.
     \end{itemize}
   \end{itemize}
   We then use the Friedman--Ramanujan hypothesis (which tells us that $\cL^{t}f_j$ grows at most
   like $(\ell+1)^{\rN_j-1} e^{x_j}$ if $t<\rK_j$ and that the exponential factor can be improved to be
   $e^{x_j/2}$ for $t=\rK_j$) as well as the hypotheses on $h$ and $\varphi$ to prove a bound of the
   form
   \begin{equation}
     \label{eq:appl_FR_1}
       \Big|\partial^{\pi_0} \varphi \prod_{j=1}^n \partial^{\pi_j} \Phi_j\Big|
       \leq C \prod_{j=1}^n (x_j+1)^{\rN_j} e^{x_j/2}.
     \end{equation}
     More precisely, we can bound the function $\partial^{\pi_j}\Phi_j$ individually by
     $(x_j+1)^{\rN_j}e^{x_j/2}$ in the first three cases above thanks to the hypotheses on $f_j$ and
     $h$ alone; in the last case, we use the hypothesis on $\varphi$ and the fact that $j \in \pi_0$
     to obtain a term $e^{-x_j/2}$ in the decay of $\partial^{\pi_0}\varphi$, balancing out the
     growth-rate $(\ell+1)^{\rN_j}e^{x_j}$ of $\partial^{\pi_j}\Phi_j$.  Equation
     \eqref{eq:appl_FR_1} then naturally replaces \eqref{eq:appl_FR_0} in the case $\rK_j=0$, and we
     conclude as before.
 \end{proof}
 
\section{Weil--Petersson volume functions}
\label{sec:volume}

For this second lecture, we shall now focus on the volume functions $V_g^{\mathrm{all}}(\ell)$ and
$V_g^\type(\ell)$ introduced in \cref{sec:volume-functions-1}.
We will provide a closed expression for the volume functions $V_g^{\type}(\ell)$ and full asymptotic
expansions for the ratio $V_g^\type(\ell)/V_g$ in powers of $1/g$.  Whilst our motivation in
studying these volume functions was initially to prove \cref{thm:dream} on the spectral gap of random
hyperbolic surfaces, the objects and results are purely geometric and we are hoping they will be
used to study other aspects of the geometry of random hyperbolic surfaces.

\subsection{Splitting the average and local topological type}
\label{sec:defin-volume-funct}


We recall that the overall counting function $V_g^{\mathrm{all}}(\ell)$ is determined by the
property
\begin{equation}
  \label{e:def_av_all}
  \av{F} := \Ewp{\sum_{\gamma \in \cG(X)} F(\ell_X(\gamma))}
  = \frac{1}{V_g} \int_0^{+ \infty} F(\ell) V_g^{\mathrm{all}}(\ell) \d \ell
\end{equation}
where $\cG(X)$ is the set of primitive closed geodesics on $X$, and for any $\gamma \in \cG(X)$,
$\ell_X(\gamma)$ denotes the length of $\gamma$.  In order to understand and study the average
$\av{F}$ and associated volume $V_g^{\mathrm{all}}(\ell)$, it is natural to split the sum depending
on the topology of the closed geodesics in it:
\begin{equation}
  \label{eq:def_av_T}
  \av[\mathrm{all}]{F} = \sum_{\type} \av[\type]{F}
  \quad \text{with} \quad
  \av[\type]{F}
  := \Ewp{\sum_{\gamma \sim \type}F(\ell(\gamma))}
\end{equation}
where we write $\gamma \sim \type$ to mean that $\gamma$ ``has the topology~$\type$''.  This
means we rewrite the overall volume function as
\begin{equation}
  \label{eq:vol_T}
  V_g^{\mathrm{all}}(\ell) = \sum_{\type}  V_g^{\type}(\ell)
\end{equation}
where $V_g^{\type} : \R_{>0} \rightarrow \R_{\geq 0}$ is the volume function defined by the property
that, for any test function $F$,
\begin{equation}
  \label{eq:av_T}
  \av[\type]{F}
    = \frac{1}{V_g} \int_0^{+ \infty} F(\ell) V_g^{\type}(\ell) \d \ell.
\end{equation}

\subsubsection{Simple v.s. non-simple}
\label{sec:simple_vs}

There is one specific kind of volume function that has been very well-studied in the literature: the
function $V_g^{\mathbf{s}}(\ell)$ defined by
\begin{equation}
  \label{eq:av_S}
  \av[\mathbf{s}]{F}
  = \Ewp{\sum_{\gamma \text{ simple}}F(\ell(\gamma))}
  = \frac{1}{V_g} \int_0^{+ \infty} F(\ell) V_g^{\mathbf{s}}(\ell) \d \ell,
\end{equation}
i.e. the contribution of simple closed geodesics to the overall average $\av{F}$. Mirzakhani
provided in \cite{mirzakhani2007} a simple expression for $V_g^{\mathbf{s}}(\ell)$ in terms of
volumes of moduli spaces, which we will present in \cref{sec:simple-local-type}.

In the regime $g \rightarrow + \infty$, one can prove that the volume function
$V_g^{\mathrm{all}}(\ell)$ is well-approximated by the contribution $V_g^{\mathbf{s}}(\ell)$ of
simple closed geodesics. This observation was first made by Mirzakhani and Petri in
\cite{mirzakhani2019}, and then vastly improved in \cite{wu2022}, where Wu and Xue proved that
\begin{equation}
  \label{eq:approx_s}
  V_g^{\mathrm{all}}(\ell)
  = V_g^{\mathbf{s}}(\ell) 
  + \mathcal{O}_\epsilon \Bigg(\frac{e^{(1+\epsilon) \ell}}{g} \Bigg).
\end{equation}
We can view this as an example of splitting by type: the topologies being ``simple''
v.s. ``non-simple''.
This is a very coarse splitting and lacks precision. Indeed, we will see in the following that the
term of size $1/g^K$ in the asymptotic expansion of $V_g^{\mathrm{all}}(\ell)/V_g$ contains
contributions of more and more intricate non-simple geodesics as the precision $K$ increases, which
we will need to  describe individually.

\subsubsection{$\MCG$-types}

Another approach present in the literature consists in splitting the sum $\av{F}$ according to
\emph{$\MCG$-types} of closed geodesics.

The mapping-class-group $\MCG(X)$ of $X$ is the set of positive diffeomorphisms of
$X$, up to isotopy. This group naturally acts on loops on the surface $X$, and we say two loops
$\gamma_1$ and $\gamma_2$ on $X$ have the same $\MCG$-type if there exists an element of $\MCG(X)$
sending $\gamma_1$ to a loop homotopic to $\gamma_2$.

Splitting the sum \eqref{e:def_av_all} by $\MCG$-type is the finest way we could split it in a way
which then allows to make sense of the averages $\av[\type]{F}$. This approach is prevalent in
Mirzakhani's work, where these types are called \emph{topological types}.

Simple closed geodesics are then split in $g$ different topological types: those which are
non-separating, and those which separate the surface into two connected components, of genus $1 \leq
i \leq g-1$ on the left of the loop, and $g-i$ on its right. See \cref{fig:simp}.
\begin{figure}[h]
  \centering
  \includegraphics{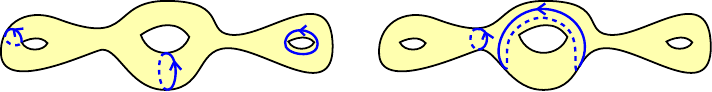}
  \caption{Examples of simple loops on a surface of genus $3$: non-separating on the left, and
    separating into a surfaces of genus $1$ and $2$ on the right.}
  \label{fig:simp}
\end{figure}

\subsubsection{Local types}
\label{sec:local-type-loops}

In our work, we introduce a new way to split the sum, by \emph{local topological type}. This
splitting is coarser than Mirzakhani's, but much more precise than the simple v.s. non-simple
splitting. It lies at a soft spot where it is precise enough to be able to write simple expressions
for each term $V_g^\type(\ell)$, but also groups a lot of similar contributions together, reducing
the need for tedious topological enumerations.

Our notion of local type consists in zooming in and looking at the closed geodesic~$\gamma$ on $X$,
and its neighborhood, but forgetting what happens outside of this neighborhood. So, for instance,
all simple closed geodesics belong to the same local type because, locally, they all ``look like''
the circle $\{0\} \times \R \diagup \Z$ inside the cylinder $[-1,1] \times \R \diagup \Z$.

We would naturally like to define the neighborhood of the geodesic $\gamma$ by taking its regular
neighborhood $\cN$, and then considering the pair $(\cN,\gamma)$, as done above in the case of
simple closed geodesics.  However, the pair $(\mathcal{N}, \gamma)$ is not defined as a function of
the homotopy class of $\gamma$ solely. This is an issue since, as we vary the metric, the geodesic
representatives of one homotopy class of loops for two distinct metrics might not be isotopic. An
example of this phenomenon is represented in \cref{fig:reid}.

\begin{figure}[h]
  \centering
  \includegraphics{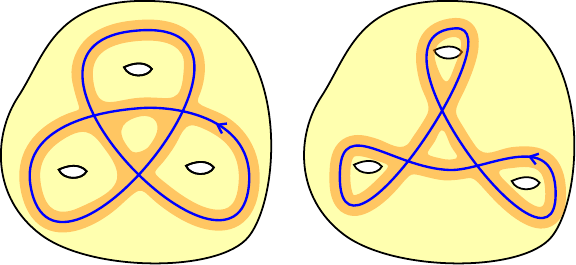}
  \caption{The two blue loops are homotopic but not isotopic.}
  \label{fig:reid}
\end{figure}

Fortunately, we know by \cite{graaf1997} that any two homotopic loops in minimal position differ
only from a finite number of third Reidemeister moves together with an isotopy. Such a move is
exactly the difference between the two loops in \cref{fig:reid}. It is associated with contractible
components in the complement of $\cN$ in $X$. This motivates the following definition.

\begin{defi}
  If $\gamma$ is a loop on $X$, we define the \emph{surface $S(\gamma)$ filled by $\gamma$} by gluing
  to its regular neighborhood $\mathcal{N}$ any contractible connected component
  of~$X \setminus \mathcal{N}$.
\end{defi}

The surface $S(\gamma)$ is then filled by $\gamma$, i.e., all of the connected components of
$S(\gamma) \setminus \gamma$ are contractible. The pair $(S(\gamma), \gamma)$ is well-defined, up to
isotopy, as a function of the homotopy class of $\gamma$.

\begin{expl}
  The surface filled by a non-contractible simple loop is a cylinder. A figure-eight fills a pair of
  pants, as represented in \cref{fig:conv_graph}. In \cref{fig:reid}, the filled surface is
  constructed by adding the central disk to the regular neighborhood, hence obtaining a surface of
  genus $0$ with $4$ boundary components. Notice how the two loops are homotopic in $S(\gamma)$ but
  not in $\cN$.
\end{expl}

We are now ready to define our notion of local type.

\begin{defi}
  Let $X$ be a compact hyperbolic surface. Two loops $\gamma_1$, $\gamma_2$ on $X$ are said to
  \emph{belong to the same local type} if there exists an positive homeomorphism
  $S(\gamma_1) \rightarrow S(\gamma_2)$ sending $\gamma_1$ on a loop homotopic to $\gamma_2$.
\end{defi}

Constructed this way, the notion of local type encompasses the local topology of the geodesic itself
but not the way it is embedded in the genus $g$ surface.  The notion of local type is independent of
the choice of a representative in the homotopy class. Two loops which have the same $\MCG$-type also
have the same local type, so this notion is coarser.

\begin{expl}
  If $\gamma_1$, $\gamma_2$ are two non-contractible simple loops on the surface $X$, then they each
  fill a cylinder and it is easy to define a homeomorphism sending $S(\gamma_1)$ on $S(\gamma_2)$
  and $\gamma_1$ on $\gamma_2$. Hence, all simple closed geodesics are grouped in one local type,
  the \emph{simple local type}, associated to the average $\av[\mathbf{s}]{F}$ and the volume
  $V_g^{\mathbf{s}}(\ell)$ introduced in \cref{sec:simple_vs}.
\end{expl}

As illustrated by the case of simple loops, the information we ``forget'' when going from
$\MCG$-type to local type is the topology of the complement $X \setminus S(\gamma)$.  We can
therefore define the notion of local type intrinsically, in the following way.

  \begin{nota}\label{n:FT}
    To any pair of integers $(g_\Sf, n_\Sf)$ such that $ 2g_\Sf-2+n_\Sf \geq 0$, we shall associate a
    \emph{fixed} smooth oriented surface $\Sf$ of signature $(g_\Sf,n_\Sf)$. We denote by
    $\chi(\Sf)= 2g_\Sf-2+n_\Sf$ the absolute Euler characteristic of $\Sf$.  The data of the pair of
    integers $(g_\Sf, n_\Sf)$, or equivalently of the surface $\Sf$, is called a \emph{filling type}.
  \end{nota}

  \begin{defi}
  \label{def:local_type}
  A \emph{local loop} is a pair $(\Sf,\curve)$, where $\Sf$ is a filling type and $\curve$ is a
  primitive loop filling $\Sf$. Two local loops $(\Sf,\curve)$ and $(\Sf',\curve')$ are said to be
  \emph{locally equivalent} if the following two conditions are satisfied:
  \begin{itemize}
  \item $\Sf=\Sf'$ (i.e.  $g_\Sf = g_{\Sf'}$ and $n_\Sf=n_{\Sf'}$);
  \item there exists a positive homeomorphism $\psi : \Sf \rightarrow \Sf$, possibly
    permuting the boundary components of $\Sf$, such that $\psi \circ \curve$ is homotopic to~$\curve'$.
  \end{itemize}
This defines an equivalence relation on local loops. Equivalence classes for this relation are
denoted as $\type=\eqc{\Sf, \curve}$ and called \emph{local (topological) types} of loops. The absolute
Euler characteristic of a local type is defined by $\chi(\type):=\chi(\Sf)$.
\end{defi}

\begin{defi}
  Let $X$ be a compact hyperbolic surface. For $\gamma \in \mathcal{G}(X)$, we say $\gamma$
  \emph{has local topological type $\type$}, and write $\gamma \sim \type$, if the local loop
  $(S(\gamma), \gamma)$ is a representative of the equivalence class $\type$.
\end{defi}

\begin{figure}[h]
  \centering
  \includegraphics{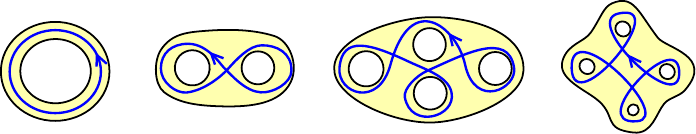}
  \caption{Examples of local topological types.}
  \label{fig:examples_types}
\end{figure}

Examples of local types are represented in Figure \ref{fig:examples_types}. The left-most type is
the simple local type, which fills a cylinder. The second is the figure-eight, which fills a pair of
pants. The last two examples are of filling type $(0,4)$: the first is an example of the class of
\emph{generalized eights} we shall be interested in moving forward.

\subsection{Writing as an integral}
\label{sec:writing-as-an}

Now that we have defined the notion of local type $\type$, and hence the volume function
$V_g^\type(\ell)$, we shall provide a method to compute it. The first step is to write it as an
integral as we have done in \cite{anantharaman2023}.

\subsubsection{Bordered hyperbolic surfaces and Teichm\"uller spaces}
\label{sec:bord-hyperb-surf}

In the following, because we will cut our compact surface along some geodesics, we will need to
consider hyperbolic surfaces with boundary. For $(g,n)$ two integers with $2g-2+n>0$ and
$\x=(x_1, \ldots, x_n) \in \R_{>0}^n$, we let $\cM_{g,n}(\x)$ denote the \emph{moduli space} of
hyperbolic surfaces of genus $g$ with $n$ labelled geodesic boundary components of respective
lengths $x_1, \ldots, x_n$, up to isometry preserving the labelling of the boundary components. 

The universal cover of $\cM_{g,n}(\x)$ is the Teichm\"uller space $\cT_{g,n}(\x)$ and can be
realized the following way. An element of $\cT_{g,n}(\x)$ is the data of a pair $(X,\phi)$, where
$X$ is a hyperbolic surface as above, and $\phi : S_{g,n} \rightarrow X$ is a \emph{marking}, i.e. a
positive diffeomorphism from a fixed base surface $S_{g,n}$ of signature $(g,n)$ to~$X$. We identify
a pair $(X_1,\phi_1)$, $(X_2,\phi_2)$ in $\cT_{g,n}(\x)$ if and only if there exists an isometry
$i:X_1 \rightarrow X_2$ such that $i \circ \phi_1$ and $\phi_2$ are isotopic.

The mapping-class-group $\MCG_{g,n}$ of $S_{g,n}$ acts on $\cT_{g,n}(\x)$ by precomposition of the
marking: $\psi \cdot (X,\phi) := (X,\phi \circ \psi^{-1})$. The moduli space $\cM_{g,n}(\x)$ then is
the quotient of $\cT_{g,n}(\x)$ for this action.

The space $\cT_{g,n}(\x)$ is equipped with the Weil--Petersson symplectic form, which induces the
Weil--Petersson volume form $\Volwp[g,n,\x]$. This volume form is invariant by action of the
mapping-class-group and hence descends to a volume form on the moduli space $\cM_{g,n}(\x)$, denoted
the same way. Whilst the volume of $\cT_{g,n}(\x)$ is infinite, that of $\cM_{g,n}(\x)$ is
finite, and we shall denote it as $V_{g,n}(\x)$. Mirzakhani proved in \cite{mirzakhani2007} that
$V_{g,n}(\x)$ is a polynomial function in $\x$ and provided recursive formulas allowing to compute
it.

\subsubsection{The case of the simple local type}
\label{sec:simple-local-type}

In a breakthrough article \cite{mirzakhani2007}, Mirzakhani proved that one can compute the volume
function $V_g^{\mathbf{s}}(\ell)$ by a beautiful argument. If a hyperbolic surface~$X$ of genus~$g$
contains a simple closed geodesic of length $\ell$, then, cutting $X$ along this closed geodesic, we
obtain a new hyperbolic surface, with two geodesic boundary components of length exactly $\ell$ (the
resulting surface might be connected or not, depending on the $\MCG$-type of $\gamma$). To recover
the initial surface, we should glue this surface back along the geodesic, with a degree of freedom,
the twist parameter, an element of $\R \diagup \ell \Z$.

\begin{figure}[h]
  \centering
  \includegraphics{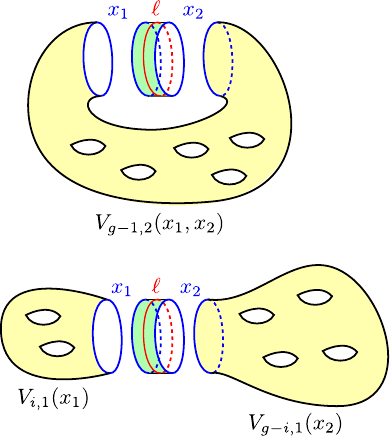}
  \caption{Illustration of the formula in the case of simple geodesics. }
  \label{fig:simple}
\end{figure}

It follows that the set of hyperbolic surfaces of genus $g$ containing a simple closed geodesic of
length exactly $\ell$ can be identified to a union of moduli spaces of bordered hyperbolic surfaces
with two boundary components of length $\ell$, with addition of a twist parameter in
$\R \diagup \ell \Z$.  This argument can be made rigorous and yields the beautiful closed
expression
\begin{equation}
  \label{eq:Vsimple}
  V_g^{\mathbf{s}}(\ell) = \ell V_{g-1,2}(\ell,\ell) + \ell \sum_{i=1}^{g-1} V_{i,1}(\ell) V_{g-i,1}(\ell).
\end{equation}
The sum is an enumeration of all possible topologies for the surface obtained once we cut our genus
$g$ surface along our simple closed geodesic, or, alternatively, all $\MCG$-types in our local type.
We represent the enumeration in Figure \ref{fig:simple}.
Note that, actually, Mirzakhani proved this expression for the volume functions associated to
$\MCG$-types, and \cref{eq:Vsimple} is obtained by summing all of these volume functions.

I promised you a formula in an integral form. Let's do this in a bit of a silly manner -- which will
make a lot more sense for general local types.
The surface filled by a simple closed geodesic is a cylinder. The space of Riemannian metrics on the
cylinder is naturally identified to $\R_{>0}$. If we were to consider the two boundary components of
the cylinder as free variables $x_1$, $x_2$, then we can rather view this space as $\R_{>0}^2$
equipped with the measure 
\begin{equation*}
  \d \Volwp[0,2](x_1,x_2) := V_{0,2}(x_1,x_2) \d x_1 \d x_2
  \quad \text{where} \quad
  V_{0,2}(x_1,x_2) = \frac{1}{x_1} \delta(x_1-x_2)
\end{equation*}
and $\delta$ is the Dirac delta function\footnote{The division by $x_1$ in the definition of
  $V_{0,2}(x_1,x_2)$ corresponds to the fact that we have one too many twist parameters when we glue
  two boundary components to either side of a cylinder.}. We can then rewrite
\begin{equation}
  \label{eq:Vsimpleint}
  V_g^{\mathbf{s}}(\ell)
  = \int_{\substack{x_1=x_2=\ell}}
  \Phi_{g}^{(0,2)}(x_1,x_2) \frac{\d \Volwp[0,2](x_1,x_2)}{\d \ell}
\end{equation}
where:
\begin{itemize}
\item the integral runs over the set of $(x_1, x_2) \in \R_{>0}^2$ such that $x_1=x_2=\ell$,
and the measure $\d \Volwp[0,2](x_1,x_2)/\d \ell$ is the measure induced by $\d
  \Volwp[0,2](x_1,x_2)$ on this level-set;
\item the function
  \begin{equation}
    \label{eq:psisimple}
    \Phi_{g}^{(0,2)}(x_1,x_2) 
    := x_1 x_2 V_{g-1,2}(x_1,x_2) + x_1 x_2\sum_{i=1}^{g-1} V_{i,1}(x_1) V_{g-i,1}(x_2)
  \end{equation}
  counts the possible metrics for the complement of the cylinder of boundary components of lengths
  $x_1$ and $x_2$ in the surface of genus $g$.
\end{itemize}

\subsubsection{General local types}
\label{sec:general-local-types}

We prove an integral expression of the same form for the volume function $V_g^\type(\ell)$ for
general local types $\type = \eqc{\Sf,\curve}$ in \cite[\S 5]{anantharaman2023}.

\paragraph{Domain of the integral}

In this case, the integral will run on the set of marked hyperbolic metrics on the filled surface
$\Sf$, the space
\begin{equation}
  \label{eq:def_teich}
  \cT^*_{\g,\n} = \{(\x, Y), \x \in \R_{>0}^{\n}, Y \in \cT_{\g,\n}(\x)\}.
\end{equation}
This space is naturally equipped with the measure
\begin{equation}
  \label{eq:vol_teich}
  \d \Volwp[\g,\n](\x,Y) = \d \x \d \Volwp[\g,\n,\x](Y)
\end{equation}
where $\d \x$ denotes the Lebesgue measure on $\R_{>0}^{\n}$. We will sometimes omit the mention of
$\x$, and write $Y \in \cT_{\g,\n}^*$ when we do not care to draw particular attention to the
boundary lengths $\x$ of $Y$.

Because $Y \in \cT_{\g,\n}^*$ is a marked surface, we can view any homotopy class of loops $\curve$
on $\Sf$ as a homotopy class of loops on $Y$. We then define the length-function
$\cT_{\g,\n}^* \ni Y \mapsto \ell_Y(\mathbf{c})$ by taking the length of the geodesic representative
in the homotopy class of  $\mathbf{c}$ for the hyperbolic metric on $Y$.

This allows to define the level-sets
\begin{equation}
  \label{eq:level-set}
  \{Y \in \cT_{\g,\n}^* \, : \, \ell_Y(\curve) = \ell\}
\end{equation}
for $\ell \in \R_{>0}$. The Weil--Petersson volume $\d \Volwp[\g,\n](\x,Y)$ naturally induces a volume on
these level-sets, denoted by $\d \Volwp[\g,\n](\x,Y)/\d \ell$, and defined by the property that, for
any integrable function $F : \cT_{\g,\n}^* \rightarrow \R$,
\begin{equation*}
  \int_{\cT_{\g,\n}^*} F(\x,Y) \d  \Volwp[\g,\n](\x,Y)
  = \int_0^{+ \infty} \Big(\int_{\ell_Y(\curve)=\ell} F(\x,Y) \frac{\d \Volwp[\g,\n](\x,Y)}{\d \ell} \Big)\d \ell.
\end{equation*}
This idea of decomposing integrals on level-sets should be reminiscent of our notion of
pseudo-convolution introduced in \cref{sec:pseudo-conv-stab}. We shall indeed later view the
integrals we will find as pseudo-convolution on the space $\cT_{\g,\n}^*$ with the height function
$h_\curve: Y \mapsto \ell_Y(\curve)$.

\paragraph{Topological enumeration}

As in the case of simple loops, we shall enumerate all ways to embed the filled surface $\Sf$ in a
surface of genus $g$. We do so by defining a notion of \emph{realization} $\mathfrak{R}$ in \cite[\S
4]{anantharaman2023}, which is the combinatorial data of the topology of the complement of $\Sf$ in
the surface of genus $g$. More precisely, a realization $\mathfrak{R}$ is given by an integer
$\mathfrak{q} \geq 1$, a partition $\vec{I}=(I_1, \ldots, I_{\mathfrak{q}})$ of $\{1, \ldots, \n\}$
and integers $(g_1, \ldots,g_{\mathfrak{q}})$ such that, if $n_i := \#I_i$, then $2g_i-2+n_i>0$ or
$(g_i,n_i)=(0,2)$.  By adding the additional constraint that the numbering of the partition
$\vec{I}$ is fixed so that $i \mapsto \min(I_i)$ is increasing, we obtain an exact enumeration of
all embedding of $\Sf$ in a surface of genus $g$. We denote as $R_g(\Sf)$ the set of such
realizations.

To a realization we can associate a volume function
\begin{equation}
  \label{eq:volR}
  V_{\mathfrak{R}}(x_1, \ldots, x_{\n})
  = \prod_{i=1}^{\mathfrak{q}} V_{g_i,n_i}(x_j, j \in I_i)
\end{equation}
with the convention that $V_{0,2}(x,y)=\frac{1}{x}\delta(x-y)$.
We then set 
\begin{equation}
  \label{eq:def_phi_S}
  \Phi_g^\Sf(x_1, \ldots, x_{\n}) := x_1 \ldots x_{\n}
  \sum_{\mathfrak{R} \in R_g(\Sf)}
  V_{\mathfrak{R}}(x_1, \ldots, x_{\n}).
\end{equation}
The distribution $\Phi_g^{\Sf}$ counts the possible metrics for the complement of $\Sf$ in a genus
$g$ surface. It is a straightforward generalization of the formula \eqref{eq:psisimple} for the
cylinder, i.e. the filling type $(0,2)$.  Note that $\Phi_g^\Sf(\x)$ is a distribution rather than a
function due to the presence of the Dirac delta function, associated to the possible cylinders in
the complement of~$\Sf$.

\begin{expl}
  Let us explicit the distribution $\Phi_g^{\mathbf{P}}(x_1,x_2,x_3)$ for the pair of pants
  $\mathbf{P}$, i.e. the filling type $(0,3)$. This requires to enumerate all embeddings of a pair
  of pants $\mathbf{P}$ in a surface of genus $g$. We find that
  \begin{equation}
    \begin{split}
      \Phi_g^{\mathbf{P}}(x_1,x_2,x_3)
      = & \, x_1 x_2 x_3
       V_{g-2,3}(x_1,x_2,x_3) \\
        &+ x_1 x_2 x_3 \sum_{\substack{\{i_1, i_2, i_3\} \\ = \{1,2,3\}}}
      \sum_{i=0}^{g-2}   V_{i,2}(x_{i_1}, x_{i_2}) V_{g-i-1,1}(x_{i_3}) \\
        &+ x_1 x_2 x_3 \sum_{g_1+g_2+g_3=g} V_{g_1,1}(x_1) V_{g_2,1}(x_2) V_{g_3,1}(x_3)
          \Big]
    \end{split}
  \end{equation}
   as illustrated in Figure \ref{fig:pop}. Note that the
   terms with $i=0$ in the second sum  correspond to situations where we glue a cylinder to two
   components of $\mathbf{P}$.
   \label{exa:int_pop}
\end{expl}

\paragraph{Our formula}

We are now ready to state our formula.

\begin{thm}
  \label{thm:express_average}
  Let $\type = \eqc{\Sf,\curve}$ be a local topological type. For any $g \geq 3$, 
  \begin{equation}
    \label{eq:orbit_decomposition}
    V_g^\type(\ell)
    = \frac{1}{n(\type)}
    \int_{\ell_Y(\curve)=\ell} \Phi_g^\Sf(\x) \, \frac{\d \Volwp[\g,\n](\x,Y)}{\d \ell}
  \end{equation}
  where the integral runs over the level-set $\{ Y \in \cT_{\g,\n}^*: \ell_Y(\curve)=\ell\}$, the
  distribution~$\Phi_{g}^\Sf$ is defined in \eqref{eq:def_phi_S}, and the integer coefficient
  $n(\type)$ is the cardinal of the group of positive homeomorphisms of $\Sf$ stabilizing the
  homotopy class of $\curve$, up to isotopy.
\end{thm}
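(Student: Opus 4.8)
\emph{Proof strategy.} The plan is to run a Mirzakhani-style unfolding argument, the same mechanism that produces the closed formula \eqref{eq:Vsimple} and its integral reformulation \eqref{eq:Vsimpleint} for the simple local type, only now with several $\MCG$-types packaged inside the single local type $\type=\eqc{\Sf,\curve}$. Starting from the defining property \eqref{eq:av_T}, I would first rewrite, for a fixed surface $X$, the geometric sum $\sum_{\gamma\sim\type}F(\ell_X(\gamma))$ as a sum over a mapping-class-group orbit: fixing a marking $\phi:S_g\to X$, the primitive closed geodesics on $X$ of local type $\type$ correspond, up to the action of the order-$n(\type)$ group of self-homeomorphisms of $\Sf$ fixing the homotopy class of $\curve$ up to isotopy, to the $\MCG(S_g)$-orbit of embeddings $(\Sf,\curve)\hookrightarrow S_g$. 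What makes this rewriting independent of the metric is precisely the discussion preceding \cref{def:local_type}: the pair $(S(\gamma),\gamma)$, and hence the local type of $\gamma$, depends only on the free homotopy class of $\gamma$ — this is the content of the Reidemeister-move subtlety illustrated in \cref{fig:reid} — so ``$\gamma\sim\type$'' is a homotopy-invariant condition.

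Next I would apply Mirzakhani's integration formula to lift $\int_{\cM_g}\big(\sum_{\gamma\sim\type}F(\ell_X(\gamma))\big)\,\d\Volwp$ to an integral over the cover of $\cM_g$ parametrizing pairs $(X,\gamma)$ with $\gamma\sim\type$. Geometrically, cutting $X$ along such a geodesic produces: the filled subsurface $S(\gamma)$, a bordered hyperbolic surface of signature $(\g,\n)$ with geodesic boundary lengths $\x=(x_1,\ldots,x_{\n})$, lifting to a marked structure $Y\in\cT_{\g,\n}(\x)$ subject to the single constraint $\ell_Y(\curve)=\ell$; the complement $X\setminus S(\gamma)$, a possibly disconnected bordered surface whose boundary lengths match $\x$; and $\n$ twist parameters, one in $\R/x_i\Z$ for each gluing curve. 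Integrating out the complement and the twists at fixed $\x$ produces exactly the distribution $\Phi_g^\Sf(\x)$ from \eqref{eq:def_phi_S}: the prefactor $x_1\cdots x_{\n}$ collects the lengths $x_i$ of the twist circles, while $\sum_{\mathfrak{R}\in R_g(\Sf)}V_{\mathfrak{R}}(\x)$ enumerates the topological types of the complement, the convention $V_{0,2}(x,y)=\tfrac1x\delta(x-y)$ absorbing the $(0,2)$ pieces (cylinders) together with their extra twist. Finally, the coarea identity $\d\Volwp[\g,\n](\x,Y)=\big(\d\Volwp[\g,\n](\x,Y)/\d\ell\big)\,\d\ell$ along the level sets of $Y\mapsto\ell_Y(\curve)$, compared against \eqref{eq:av_T}, identifies $V_g^\type(\ell)$ with the right-hand side of \eqref{eq:orbit_decomposition}.

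The hard part is the bookkeeping of this unfolding, rather than any single estimate. One must pin down which cover of $\cM_g$ parametrizes the pairs $(X,\gamma)$ and check that the parametrization by marked filled structures $Y\in\cT_{\g,\n}^*$ is exactly $n(\type)$-to-one: two markings of $\Sf$ differing by an element of the stabilizer of $[\curve]$ in $\mathrm{Homeo}^+(\Sf)$ glue up to the same geodesic $\gamma$, which is precisely why $n(\type)$, the cardinality of that stabilizer, appears in the denominator. One must also verify that summing over realizations $\mathfrak{R}\in R_g(\Sf)$ reproduces exactly the $\MCG$-types grouped inside $\type$, with neither over- nor under-counting — the normalization that $i\mapsto\min(I_i)$ be increasing on the partition $\vec I$ is what rigidifies this enumeration. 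Finally, since $\Phi_g^\Sf$ is genuinely a distribution, the equality is to be understood after integration against test functions, and the hypothesis $g\geq 3$ is used to ensure that all the relevant embeddings of $\Sf$ into a surface of genus $g$ actually occur. All of these verifications are carried out in detail in \cite[\S 4, \S 5]{anantharaman2023}, whose construction this argument essentially reproduces.
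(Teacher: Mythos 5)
Your proposal is correct and follows essentially the same route as the paper's (the Mirzakhani unfolding argument from \cite[\S 4--\S 5]{anantharaman2023}, which the notes sketch explicitly for the simple local type in \eqref{eq:Vsimple}--\eqref{eq:Vsimpleint} and then invoke for general types): reduce to $\MCG$-orbits modulo the $n(\type)$-fold ambiguity of identifying $(S(\gamma),\gamma)$ with $(\Sf,\curve)$, unfold via Mirzakhani's integration formula by cutting along the boundary multicurve of $S(\gamma)$, integrate out the complement and twists to produce $\Phi_g^\Sf$, and finish with the coarea disintegration along $\ell_Y(\curve)=\ell$. One small wording slip — you say ``cutting $X$ along such a geodesic,'' but you mean cutting along the geodesic boundary of the filled subsurface $S(\gamma)$, as your subsequent description of the pieces makes clear.
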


Let us explicit the formula in a non-simple elementary example: local types of filling type $(0,3)$,
i.e. loops filling a pair of pants.

\begin{expl}
  A key property of pairs of pants is the their metric is entirely determined by the length of their
  boundary components and  $\cT_{0,3}^* = \R_{>0}^3$.
    \begin{figure}[h!]
  \centering
  \includegraphics{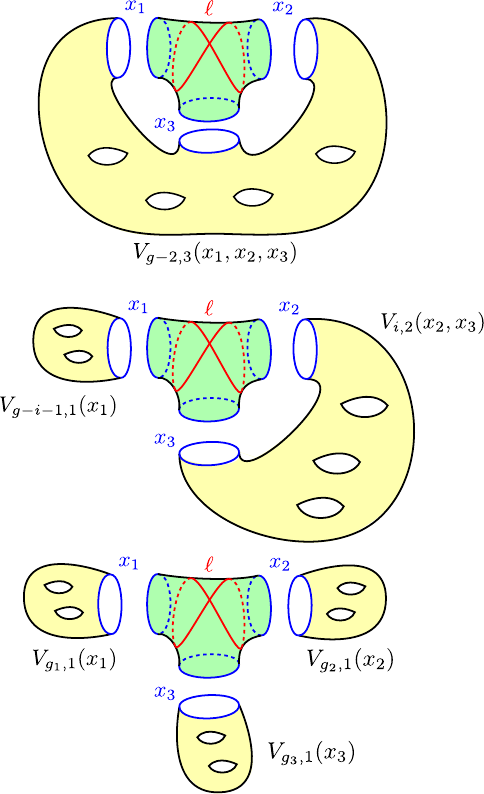}
  \caption{Illustration of the formula in the case of the figure-eight.}
  \label{fig:pop}
\end{figure}
Let $\curve$ be a loop filling the pair of pants $\mathbf{P}$. The length of $\curve$ is an analytic
function of $\x = (x_1,x_2,x_3) \in \cT_{0,3}^*$.  If for instance $\curve$ is a figure-eight, then
its length satisfies
  \begin{equation}
    \label{eq:length_eight}
    \cosh \div{\ell} = 2 \cosh \div{x_1} \cosh \div{x_2} + \cosh \div{x_3}
  \end{equation}
  so that we can express $x_3$ as a function of $\ell, x_1, x_2$:
  \begin{equation}
    x_3(\ell,x_1,x_2) = 2 \argch \Big( \cosh \div{\ell} - 2 \cosh \div{x_1} \cosh \div{x_2} \Big)
  \end{equation}
  which is defined as soon as
  $\cosh \div{x_1} \cosh \div{x_2}\leq M(\ell) := \frac{1}{2}(\cosh \div{\ell}-1)$.  Our formula
  then reads
  \begin{equation*}
    V_g^{\eqc{P, \curve}}(\ell)
    = \frac{\sinh \div{\ell}}{n(\curve)}
    \iint_{\cosh \div{x_1} \cosh \div{x_2} \leq M(\ell)}
    \frac{\Phi_g^{\mathbf{P}}(x_1, x_2, x_3(\ell,x_1,x_2))}{\sinh \div{x_3(\ell,x_1,x_2)}}
    \d x_1 \d x_2
  \end{equation*}
  because the measure induced by the Lebesgue measure on the level-sets is
  \begin{equation}
    \frac{\d x_1 \d x_2 \d x_3}{\d \ell}
    = \frac{\partial x_3}{\partial \ell} \d x_1 \d x_2
    = \frac{\sinh \div \ell}{\sinh \div{x_3}} \d x_1 \d x_2.
  \end{equation}
  We can perform the same operations fixing $x_2$ and $x_3$, and viewing $x_1$ as a function of
  $\ell, x_2, x_3$, and this yields different expressions for the same quantity. 
\end{expl}

A great feature of the formula \eqref{eq:orbit_decomposition} for $V_g^\type(\ell)$ is that its
dependency on the ambient genus $g$ is only through the function $\Phi_g^\Sf(\x)$, enumerating all
embeddings of $\Sf$ in a genus $g$ surface. In a way, the formula cuts the analysis of the volume
function $V_g^\type(\ell)$ into two completely separate questions.
\begin{itemize}
\item On the one hand, we need to understand the large-genus limit, which is entirely captured in
  the function $\Phi_g^\Sf(\x)$ (which only depends on the filled surface $\Sf$, and not the loop
  $\curve$).
\item On the other hand, we need to understand the level-sets of the function $Y \mapsto
  \ell_Y(\curve)$ on the Teichm\"uller space $\cT_{\g,\n}^*$. This is intimately connected to the
  topology of the loop $\curve$.
\end{itemize}

In the case of the simple local type, or local types filling a pair of pants, the expression of
$\d \Volwp[\g,\n](\x,Y)$ is very simple because the metric on $\Sf$ is entirely defined by the
length $\x$ of the boundary of $\Sf$. In all other cases, this is not the case, and describing the
measure $\d \Volwp[\g,\n](\x,Y)/\d \ell$ is one of the key challenges we tackle in this project.

\subsection{Asymptotic expansion in powers of $1/g$}
\label{sec:asympt-expans-powers}

Let us address the first issue raised above: understanding the large-genus limit. We provided in
\cite{anantharaman2022} asymptotic expansions for the volume polynomials $V_{g,n}(\x)$ in the
large-genus limit. Such expansions had already been considered by Mirzakhani and Zograf in
\cite{mirzakhani2015}, but the novelty in our article is that we focus on the behavior of the
asymptotic expansion as a function of the length variables $\x \in \R_{>0}^{\n}$, which is what we care
about here.

By standard techniques for asymptotic expansions, we deduce from the expression of $\Phi_g^\Sf(\x)$
together with the expansions for the volume polynomials $V_{g,n}(\x)$ that we can write an expansion
of the form
  \begin{equation}
    \label{e:claim_exp_phi_T}
    \frac{\Phi_g^\Sf(\x)}{V_g} = \sum_{k=\chi(\Sf)}^\ord \frac{\psi_k^\Sf(\x)}{g^k}
    + \mathcal{O}_{\ord,\chi(\Sf)}
    \left(\frac{(\|\x\|+1)^{c_\ord^\Sf}}{g^{\ord+1}} \exp{\div{x_1+\ldots+x_{\n}}}\right).
  \end{equation}
Most importantly, we obtain that the terms of this expansion can be written as linear combinations
of the distributions
\begin{equation}
  \label{eq:form_coeff_phi}
  \prod_{i \in V} x_i^{k_i}
  \prod_{i \in V_+} \cosh \div{x_i}
  \prod_{i \in V_-} \sinh \div{x_i}
  \prod_{j=1}^k x_{i_j} \delta(x_{i_j}-x_{i_j'}),
  \end{equation}
  where $V_0, V_+, V_-$ are disjoint subsets of $\{1, \ldots, \n\}$, $V:=V_0 \cup V_+ \cup V_-$,
  $(k_i)_{i \in V}$ are integers, and $\{(i_j,i_j')\}_{1 \leq j \leq k}$ is a perfect matching of
  the complement of $V$.
  
  As a consequence, now using \cref{thm:express_average}, we obtain that the volume functions
  $V_g^\type(\ell)/V_g$ admit asymptotic expansions in powers of $1/g$, and that the terms of this
  expansion can be expressed as a linear combination of functions of the form
  \begin{equation}
    \label{eq:form_exp}
    \int_{\ell_Y(\curve)=\ell}
    \prod_{i \in V} x_i^{k_i}
    \prod_{\epsilon = \pm}
      \prod_{i \in V_\epsilon} \hyp_\epsilon \div{x_i}
      \prod_{j=1}^k x_{i_j} \delta(x_{i_j}-x_{i_j'})
      \frac{\d \Volwp[\g,\n](\x,Y)}{\d \ell}
  \end{equation}
  where we introduce the shorthand notation
  \begin{equation}
    \label{eq:hyp}
    \hyp_\epsilon(x) :=
    \frac{\exp(x) + \epsilon \exp(-x)}{2}
    =
    \begin{cases}
      \cosh(x) & \text{ if } \epsilon = + \\
      \sinh(x) & \text{ if } \epsilon = -.
    \end{cases}
  \end{equation}
  From there onward, in the rest of this section, there is no more mention of the ambient genus
  $g$: we uniquely focus on describing the measure $\d \Volwp[\g,\n](\x,Y) / \d \ell$ on the
  level-sets of $\cT_{\g,\n}^*$, and the properties of integrals of the form \eqref{eq:form_exp}.

  It is very important to notice that the functions we integrate have a very specific structure, not
  dissimilar to the Friedman--Ramanujan hypothesis. Actually, we study, in all generality, the
  integrals
  \begin{equation}
    \label{eq:form_exp_gen}
    \int_{\ell_Y(\curve)=\ell}
    \prod_{j \in V} f_j(x_j)
    \prod_{j=1}^k x_{i_j} \delta(x_{i_j}-x_{i_j'})
    \frac{\d \Volwp[\g,\n](\x,Y)}{\d \ell}
  \end{equation}
  where the functions $f_j$ are so that, for all $j \in V$, $f_j(x)e^{x/2}/x$ is a
  Friedman--Ramanujan function. One of the key steps to the proof of \cref{thm:dream} is that, under
  this hypothesis, any integral of the form \eqref{eq:form_exp_gen} is a Friedman--Ramanujan
  function in the weak sense.
  
\subsection{New set of coordinates on Teichm\"uller spaces}
\label{sec:expr-weil-peterss}

Let us now focus on the second challenge raised by \cref{thm:express_average}: the integration on
the level-sets
\begin{equation}
  \{(\x,Y) \in \cT_{\g,\n}^* \; : \; \ell_Y(\curve)=\ell\}
\end{equation}against the
Weil--Petersson volume. This is completely new territory and we have had to develop a lot of tools from
scratch. Notably, we introduce a family of new coordinate-systems on the spaces $\cT_{\g,\n}^*$. 

\subsubsection{Opening the intersections and generalized eights}
\label{sec:our-geom-constr}

Let us consider a local type $\type = \eqc{\Sf,\curve}$. We assume the type is non-simple because
the simple case is already well-known. We pick a representative of the homotopy class of $\curve$ in
minimal position, with all intersections transverse and with only double points. Let $r \geq 1$
denote the number of self-intersections of $\curve$.

\begin{figure}[h!]
  \centering
  \begin{subfigure}[b]{0.5\textwidth}
    \centering
    \includegraphics{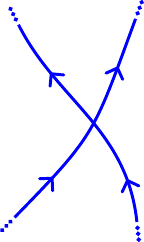}
    \caption{The initial intersection.}
    \label{fig:open_0}
  \end{subfigure}%
  \begin{subfigure}[b]{0.5\textwidth}
    \centering
    \includegraphics{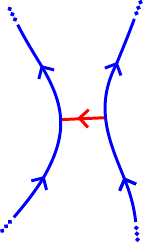}
    \caption{Opening the intersection.}
    \label{fig:open_first}
  \end{subfigure}%
  \caption{Opening an intersection to replace it by a bar. 
  }
  \label{fig:opening_int}
\end{figure}%

We consider a family of disjoint neighborhoods of the $r$ self-intersection points of $\curve$,
such that the restriction of the loop $\curve$ to each neighborhood is a cross, as represented in
\cref{fig:open_0}. In each neighborhood, we \emph{open} the intersection as represented in
\cref{fig:open_first}. We keep track of the presence of the intersection by replacing it with an
oriented bar.

The result of this operation is what we call a \emph{diagram}, a multi-loop $\beta$ together with a
family of $r$ bars $B$ with endpoints on $\beta$, with no self-intersection outside of the endpoints
of the bars. See \cref{fig:open_ex} for some examples.

\begin{figure}[h!]
  \centering
    \includegraphics{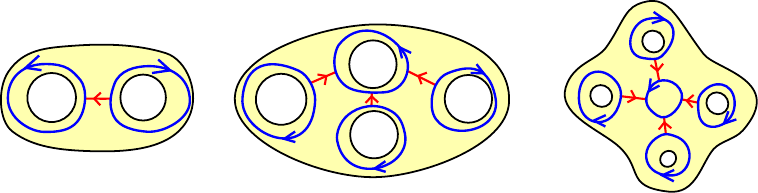}
  \caption{The result of our opening procedure on the examples of \cref{fig:examples_types}.}
  \label{fig:open_ex}
\end{figure}%

Note that the multi-loop $\beta$ is not always a multi-curve: whilst it has no self-intersections,
some of its components can be contractible or homotopic to one another. This happens when there are
disks in the complement of $\curve$, as is the case for instance in the rightmost example of
\cref{fig:open_ex}. We shall be mostly interested in a specific type of loop called generalized
eights, defined below.

\begin{defi}
  We say $\curve$ is a \emph{generalized eight} if no component of the regular neighborhood of
  $\curve$ in $\Sf$ is contractible in $\Sf$.
\end{defi}

This definition has the following consequence.

\begin{lem}
  If $\curve$ is a generalized eight, then $\beta$ is a multi-curve, i.e. all of its components are
  non-contractible and homotopically distinct. The diagram $(\beta, B)$ is uniquely defined, and
  $\chi(\Sf)=r$.  
\end{lem}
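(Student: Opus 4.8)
The plan is to deduce all three assertions from a single normal form for generalized eights: that $\Sf$ deformation retracts onto a regular neighbourhood $\cN$ of $\curve$, equivalently that every connected component of $\Sf\setminus\curve$ is a half-open annulus abutting one boundary component of $\Sf$. First I would establish this normal form: since $\curve$ fills $\Sf$, its complementary regions are disks or boundary-parallel annuli, and the generalized-eight hypothesis forbids the disks; collapsing each remaining annulus onto its inner boundary circle exhibits $\Sf$ as deformation retracting onto $\cN$, which in turn deformation retracts onto $\curve$ viewed as a $4$-valent graph (one vertex at each self-intersection, one edge for each arc of $\curve$ between consecutive self-intersections). I would then run the same analysis on the opened picture: opening a crossing replaces, inside a small disk, a neighbourhood of an ``$X$'' by two parallel bands joined by a third band (the bar), which is again a disk meeting the boundary of that small disk in the same four arcs. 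Hence there is a homeomorphism of $\Sf$, supported near the $r$ crossings, carrying $\cN$ onto a regular neighbourhood of the graph $\beta\cup B$; in particular $\Sf$ also deformation retracts onto the connected graph $\beta\cup B$, whose complementary components are again boundary-parallel annuli.

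With the normal form in hand the Euler characteristic is immediate: the graph $\curve$ has $r$ vertices and, each being $4$-valent, $2r$ edges, so $\chi_{\mathrm{top}}(\curve)=r-2r=-r$; since $\Sf$ deformation retracts onto it, $\chi_{\mathrm{top}}(\Sf)=-r$, i.e. $\chi(\Sf)=r$ in the absolute-value convention of \cref{n:FT}. (As a check, $\beta\cup B$ has $2r$ vertices and $3r$ edges, again with $\chi_{\mathrm{top}}=-r$.)

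For the uniqueness of $(\beta,B)$ I would first note that opening is canonical once a minimal-position representative of $\curve$ is fixed: the crossings are the prescribed double points, at each of which an oriented loop admits only the orientation-compatible smoothing, and different choices of disjoint crossing-neighbourhoods yield isotopic diagrams. So it suffices to show that a generalized eight has a unique minimal-position representative up to isotopy. By \cite{graaf1997} any two homotopic loops in minimal position differ by an isotopy together with finitely many third Reidemeister moves, and each such move is performed across a triangular complementary disk; since $\curve$, and every loop isotopic to it (isotopies carrying complementary regions to homeomorphic ones), has no complementary disk at all, no such move is ever available, so the sequence relating $\curve$ to any other minimal-position representative is trivial. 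Hence $(\beta,B)$ depends only on the homotopy class of $\curve$.

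Finally, $\beta$ is a multi-curve. Each component $\beta_i$ is by construction a simple closed curve disjoint from the other components and from the interiors of the bars, hence an embedded cycle in the connected graph $\beta\cup B$ onto which $\Sf$ deformation retracts. Collapsing a maximal tree identifies $\pi_1(\Sf)$ with a free group in which $[\beta_i]$ is represented by the cyclically reduced word spelled by the non-tree edges traversed along $\beta_i$: this word is nonempty (an embedded cycle cannot lie inside a tree) and uses each letter at most once (embeddedness), hence is nontrivial, so $\beta_i$ is non-contractible in $\Sf$. For distinct components $\beta_i,\beta_j$, disjointness forces their words to involve disjoint sets of generators, both nonempty and cyclically reduced, so the corresponding conjugacy classes — and those of their inverses — are pairwise distinct; therefore $\beta_i$ is not freely homotopic to $\beta_j$ or to $\beta_j^{-1}$. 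Combining the three parts proves the lemma. The step I expect to cost the most care is the very first one: extracting from the literal generalized-eight condition the statement that the complement of $\curve$ consists solely of boundary-parallel annuli, and transporting this fact faithfully across the opening homeomorphism; once that normal form is secured, the graph-theoretic consequences above are routine.
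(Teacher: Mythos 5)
Your argument is correct, and I do not see a gap. The key structural step — that the generalized-eight hypothesis forces every complementary region of $\curve$ in $\Sf$ to be a boundary-parallel annulus, so $\Sf$ deformation retracts onto the regular neighborhood $\cN$ (and, via the local homeomorphism supported near the crossings, onto the graph $\beta\cup B$) — is exactly the right normal form, and you correctly extract it from the somewhat compressed wording of the definition. From there the three parts are routine: $\chi(\Sf)=r$ is the Euler characteristic of the $4$-valent graph $\curve$ (or equivalently of $\beta\cup B$ with $2r$ trivalent vertices and $3r$ edges); the components of $\beta$ are embedded cycles in a graph with free $\pi_1$, and your observation that disjoint embedded cycles yield nonempty cyclically reduced words over disjoint sets of generators gives both non-contractibility and pairwise non-conjugacy (also with inverses); and the uniqueness of the diagram follows from the de Graaf--Schrijver statement plus the absence of any complementary disk, which rules out triangular regions and hence Reidemeister-III moves, with isotopies preserving that property along the sequence. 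The lecture notes do not reproduce a proof of this lemma to compare against, but this is the natural route and I would expect the source article to argue along essentially the same lines.
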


Throughout the rest of these notes, we assume that $\curve$ is a generalized eight. This is
the most interesting case in our proof. The two examples on the left of Figure \ref{fig:open_ex} are
generalized eights.

\subsubsection{Geodesic diagram}

Let us now equip the surface $\Sf$ with a metric, i.e. take a point $(\x,Y) \in
\cT_{\g,\n}^*$. Because $\curve$ is a generalized eight, we can pick the representative of $Y$ in
the Teichm\"uller space so that the multi-curve $\beta$ is a multi-geodesic. The bars $B$ are then
paths from $\beta$ to itself, which do not intersect.

We consider homotopy classes with endpoints \emph{gliding} along $\beta$: two paths $\gamma_0$,
$\gamma_1$ on $\Sf$ with endpoints on $\beta$ are said to be homotopic in that sense if there is a
continuous map $[0,1] \times [0,1] \rightarrow \Sf$, coinciding with $\gamma_0$ and $\gamma_1$ when
the first variable is set to be $0$ or $1$ respectively, and with any point of coordinates $(t,0)$
or $(t,1)$ on $\beta$.  If $\beta$ is a multi-geodesic on the metric surface $Y$, then any
non-trivial homotopy class with endpoints gliding along $\beta$ has a unique length-minimizing
element, which is a orthogeodesic with endpoints on $\beta$. On all figures, we will mark the
endpoints of such orthogeodesics by large dots to symbolize the fact that there is a right-angle
there.

\begin{figure}[h]
  \centering
  \includegraphics{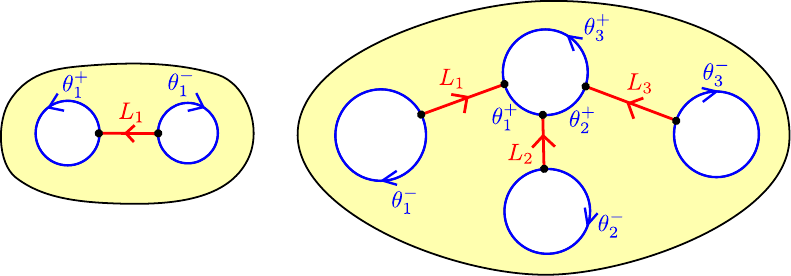}
  \caption{The geodesic diagrams for the generalized eights above.}
  \label{fig:geod_diag}
\end{figure}

We shall replace the bars $B$ by orthogeodesic representatives (here we use the fact that $\curve$
is a generalized eight to check none of these homotopy classes are trivial). We obtain the
\emph{geodesic representative} of the diagram $(\beta, B)$. See \cref{fig:geod_diag} for two
examples. Our new coordinates on the Teichm\"uller space $\cT_{\g,\n}^*$ will be the family of lengths
naturally appearing on this geodesic diagram.

\subsubsection{Definition of our coordinates}
\label{sec:defin-labell-coord}

Let us pick an arbitrary labelling of the $r$ intersection points of $\curve$ by the set
$\{1, \ldots, r\}$. This means that the bars $B$ recording the intersections are now labelled as
$B_1, \ldots, B_r$. Recall that we picked an orientation convention when constructing the bars in
\cref{fig:opening_int}. We shall denote as $B_1^+, \ldots, B_r^+$ the bars oriented as in
\cref{fig:opening_int}, and $B_1^-, \ldots, B_r^-$ the bars with reverse orientation. 

On the topological diagram, just after we open the intersections, we observe that $\beta$ is cut
into $2r$ segments by the bars $B$. We call these segments \emph{simple portions} because they
correspond to maximal sub-segments of $\curve$ with no self-intersections. The simple portions are
oriented in a fashion which is consistent with the initial loop $\curve$, as illustrated in
\cref{fig:opening_int}. We label them by the set
\begin{equation}
  \label{eq:thetadef}
  \Theta := \{1, \ldots, r\} \times \{\pm\}
\end{equation}
by letting, for any $q = (j, \epsilon) \in \Theta$, $\cI_q = \cI_j^\epsilon$ be the simple portion
starting at the terminus of the bar $B_q=B_j^\epsilon$. 

If now we pick a metric on $\Sf$, we shall denote as $\overline{B}_q$ and $\overline{\cI}_q$ the
bars and simple portions corresponding to $B_q$ and $\cI_q$ in the geodesic representative of the
diagram $(\beta,B)$. Note that, as we make the endpoints of $B_q$ glide along $\beta$, the simple
portions $(\cI_q)_{q \in \Theta}$ might change orientation or wind around $\beta$.

We define the following geometric quantities, illustrated in \cref{fig:geod_diag}.

\begin{defi}
  For $j \in \{1, \ldots, r\}$, we denote as $L_j>0$ the length of the orthogeodesic
  $\overline{B}_j$.
  For $q \in \Theta$, we denote as $\theta_q$ the algebraic length of the segment
  $\overline{\cI}_q$, positive if it is oriented as in the initial diagram, negative otherwise.
\end{defi}

Note that the vectors $\vec{L} = (L_1, \ldots, L_r) \in \R_{>0}^r$ and $\vec{\theta} =
(\theta_1^\pm, \ldots, \theta_r^\pm) \in \R^\Theta$ contain $3r$ components. This is the dimension
of the Teichm\"uller space $\cT_{\g,\n}^*$. 

To conclude this section, we observe that the lengths of the multi-geodesic $\beta$ can be written
simply as a sum of $\theta$-parameters. More precisely, we denote as $\Lambdabeta$ the set of
components of $\beta$, so that $\beta = (\beta_\lambda)_{\lambda \in \Lambdabeta}$. For each
$\lambda \in \Lambdabeta$, writing $y_\lambda = \ell_Y(\beta_Y)$, we then have
\begin{equation}
  \label{eq:l_beta}
  y_\lambda
  = \sum_{q \in \Theta_t(\lambda)} \theta_q
  \quad \text{where} \quad
  \Theta_t(\lambda) := \{q \in \Theta \, : \, B_q \text{ terminates on } \beta_\lambda\}.
\end{equation}
(The notation $\lambda \in \Lambdabeta$ is a bit strange here, but we keep it to be consistent with
the notations in \cite{anantharaman2025}.)

\subsubsection{Computation of the Weil--Petersson volume form}
\label{sec:comp-weil-peterss}

\paragraph{Statement}

Our objective in introducing the variables $\vec{L}$, $\vec{\theta}$ is to provide new sets of
coordinates on the space $\cT_{\g,\n}^*$, the Teichm\"uller space of hyperbolic metrics on the
surface $\Sf$, adapted to our loop $\curve$. This space is naturally equipped with the
Weil--Petersson volume form
$\d \mathrm{Vol}^{\mathrm{WP}}_{\g, \n}(\x,Y) = \d \x \d \mathrm{Vol}^{\mathrm{WP}}_{\g, \n}(Y)$. We
prove the following.

\begin{thm}\label{t:maindet}
  For any generalized eight $\mathbf{c}$ filling $\Sf$,
  the map
  \begin{equation}
    \label{eq:ch_var}
    \begin{cases}
      \cT^*_{\g, \n} & \rightarrow \R_{>0}^{r} \times \R^\Theta \\
      (\x, Y) & \mapsto (\vec{L}, \vec{\theta})
    \end{cases}
  \end{equation}
  is a $\mathcal{C}^1$-diffeomorphism onto its image $\domain$,
  and
  \begin{equation}\label{e:maindet}
    \begin{split}
     & 2^{\n}\prod_{j=1}^{\n} \sinh\div{x_j} \d \mathrm{Vol}^{\mathrm{WP}}_{\g, \n}(\x,Y)\\
     & = 2^{\# \Lambdabeta} \prod_{\lambda \in \Lambdabeta} \sinh^2\div{y_\lambda} \prod_{j=1}^r \sinh(L_j) \d^r \vec{L} \d^{2r}
      \vec{\theta}
    \end{split}
  \end{equation}
  where $y_\lambda$ is expressed in \eqref{eq:l_beta}, $\d^r \vec{L}=\prod_{j=1}^r \d L_j$ and
  $\d^{2r} \vec{\theta}=\prod_{q \in \Theta} \d \theta_q$.  
\end{thm}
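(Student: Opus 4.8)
The plan is to build the diffeomorphism and compute the Jacobian by decomposing the surface $\Sf$ into simple geometric pieces cut along the multi-geodesic $\beta$ and the orthogeodesic bars $\overline{B}_j$, and then to use Mirzakhani-type twist-length (Fenchel--Nielsen) coordinates together with the symplectic nature of the Weil--Petersson form. The key observation is that the family of geodesic segments $\beta \cup \{\overline B_1, \ldots, \overline B_r\}$ cuts $\Sf$ into a collection of right-angled hyperbolic pieces (hexagons, pentagons, or more degenerate polygons) whose shapes are rigidly determined by the lengths $\vec L$ and the lengths of the simple portions $|\theta_q|$; the signs of the $\theta_q$ record how the bars' feet glide along $\beta$, i.e. the twisting. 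I would therefore first show that $(\vec L, \vec\theta)$ determines $(\x, Y)$: given the $L_j$ and $\theta_q$, reconstruct each right-angled polygon by standard hyperbolic trigonometry, then reglue along $\beta$ with the prescribed gliding offsets, recovering the metric on $\Sf$ and the boundary lengths $x_j$ via the cusp/boundary structure; conversely smoothness and local invertibility in both directions follow from the implicit function theorem applied to the (real-analytic) length and angle functions, so the map is a $\mathcal C^1$-diffeomorphism onto an open image $\domain$.

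For the volume form, the strategy is to route the computation through intermediate Fenchel--Nielsen-type coordinates adapted to $\beta$. Completing $\beta$ to a pants decomposition of $\Sf$ (adding $\partial\Sf$ and extra curves as needed), Wolpert's formula gives $\d\Volwp[\g,\n] = \prod \d\ell_i \wedge \d\tau_i$ up to the usual normalization, where the product runs over interior pants curves, with the boundary lengths $x_j$ as parameters. The components $\beta_\lambda$ with their lengths $y_\lambda$ and twists sit among these. I would then change coordinates in two stages: first express the $y_\lambda$ in terms of the $\theta_q$ via the linear relation \eqref{eq:l_beta}, which contributes only a combinatorial Jacobian factor (a product over $\Lambdabeta$ of the number of $\theta$-variables feeding each $y_\lambda$, absorbed once one picks, for each $\lambda$, one distinguished $\theta_q$ to play the role of $y_\lambda$); second, and this is the heart of the matter, re-express the remaining $\theta_q$'s and the complementary pants curves in terms of the $L_j$'s and the surviving $\theta_q$'s using hyperbolic trigonometry of the right-angled polygons. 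The Jacobian of this second change of variables is where the factors $\sinh(L_j)$ and the $\sinh^2\div{y_\lambda}/\sinh\div{x_j}$ ratios emerge: each bar length $L_j$ enters a right-angled hexagon relation of the type $\cosh L = \sinh a \sinh b \cosh c - \cosh a \cosh b$ (or a pentagon degeneration thereof), whose differential with respect to $L$ produces a $\sinh L$, while the half-length factors $\sinh\div{x_j}$ and $\sinh\div{y_\lambda}$ are the standard boundary/gluing weights that already appear in Mirzakhani's covering-space arguments (cf. the $x_1 x_2 \delta(x_1-x_2)/x_1$ normalization for cylinders and the $\sinh\div{\ell}$ factors in the figure-eight example \eqref{eq:length_eight}). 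Carefully tracking how many times each $y_\lambda$ is involved — namely once as a pants curve it bounds and with multiplicity related to $\#\Theta_t(\lambda)$ — yields the exponent $2$ on $\sinh\div{y_\lambda}$ and the global $2^{\#\Lambdabeta}$ versus $2^{\n}$ prefactors.

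The main obstacle I anticipate is precisely the bookkeeping in this second stage: verifying that the decomposition of $\Sf$ by $\beta$ and the orthogeodesic bars is always into pieces whose hyperbolic trigonometry is uniformly controlled (no unexpected degeneracies, and the bars genuinely realize as disjoint simple orthogeodesics — here the generalized-eight hypothesis, via the lemma giving $\chi(\Sf)=r$ and $\beta$ a genuine multi-curve, is essential), and then assembling the per-piece Jacobians into the clean product formula \eqref{e:maindet} with the correct powers of $2$ and of $\sinh$. A secondary subtlety is that the $\theta_q$ range over all of $\R$ (the simple portions can wind around $\beta$), so one must check that the trigonometric reconstruction is consistent across the gluing and that $\domain$ is genuinely the full image with no overcounting — this is the analogue of the fact that Fenchel--Nielsen twists live in $\R$, not $\R/\ell\Z$, on Teichm\"uller space. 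Once the polygon-by-polygon Jacobian is pinned down and the combinatorics of \eqref{eq:l_beta} is folded in, the identity \eqref{e:maindet} should follow by multiplying the local contributions; I would organize the write-up by first treating the case $r=1$ (the ordinary figure-eight, where $\Sf$ is a pair of pants and one recovers \eqref{eq:length_eight} directly) as a sanity check, then induct on $r$ by peeling off one bar at a time.
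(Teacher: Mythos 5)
Your proposal takes essentially the same route as the paper: Wolpert's formula applied to a pair-of-pants decomposition adapted to the diagram, explicit right-angled hexagon trigonometry to track how $L_j$ and $\theta_q$ relate to Fenchel--Nielsen lengths and twists, and an induction on $r$ that peels off one bar at a time (with the two $r=1$ cases, the figure-eight and the once-holed torus, as base cases). One point worth emphasizing when you write up the inductive step, which your sketch leaves implicit, is the observation that makes the bar-by-bar bookkeeping close up cleanly: the orthogeodesics $\overline B_1,\ldots,\overline B_{r-1}$ lying outside the pair of pants created by the last bar $B_r$ are independent of the Fenchel--Nielsen parameters of that new pair of pants, so all previously established coordinates can be held fixed while you compute the local change of variables $(T_r,\alpha_r^{\pm})\mapsto(L_r,\theta$-parameters$)$; without this independence the ``per-piece Jacobians'' would not simply multiply.
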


\begin{rem}
  The result we prove in \cite{anantharaman2025} is actually more general in two ways.
  \begin{itemize}
  \item There is actually two distinct ways to open any given intersections of
    $\curve$. In~\cite{anantharaman2025}, we refer to the choice we made in \cref{fig:opening_int}
    as ``the first way''. It is our preferred resolving of the intersection because it respects the
    orientation of the loop~$\curve$, and this is the reason why I have only presented this way in
    the notes. However, we can pick a way for each intersection, and this actually yields different
    coordinate systems on $\cT_{\g,\n}^*$, for which \eqref{e:maindet} holds.
  \item In all rigour, we need to consider \emph{multi-loops}, i.e. families of loops
    $\curve = (\curve_1, \ldots, \curve_\cc)$ with $\cc$ components filling $\Sf$. I have hidden
    this difficulty to make the exposition lighter, but it is necessary to do so because the proof
    of \cref{t:maindet} by induction requires it. Though sometimes slightly cumbersome in terms of
    notation, this distinction does not generate any difficulties.
  \end{itemize}
\end{rem}

\paragraph{Wolpert's formula and pairs of pants decomposition}

Our proof relies on Wolpert's magic formula for the Weil--Petersson volume in Fenchel--Nielsen
coordinates. These coordinates are generated by decomposing the surface $\Sf$ into pairs of pants,
and considering the lengths that then naturally appear. A pair of pants decomposition is a
multi-curve $\Gamma = (\Gamma_\lambda)_{\lambda \in \Lambda}$ on $\Sf$ such that the complement of
$\Gamma$ is a family of surfaces of signature $(0,3)$.

Here, we consider that the boundary components of $\Sf$ itself are included in the pair of pants
decomposition. We denote as $\Lambdain$ the set of inner components of $\Lambda$, i.e. components of
the pair of pants decomposition which are not boundary components of $\Sf$.

For $\lambda \in \Lambda$, we denote as $y_\lambda$ the geodesic length of $\Gamma_\lambda$. If
$\lambda \in \Lambdain$, i.e. if $\Gamma_\lambda$ is an inner curve, then we pick a twist parameter
$\alpha_\lambda$ along $\Gamma_\lambda$. Then, Wolpert's magic formula states that
\begin{equation}
  \label{eq:FN}
  \d \Volwp[\g,\n](\x,Y)
  = \prod_{\lambda \in \Lambda} \d y_\lambda \prod_{\lambda \in \Lambdain} \d \alpha_\lambda.
\end{equation}
We have a length-parameter $y_\lambda$ for every element of the pair of pants decomposition,
including the boundary components of $\Sf$, because we consider the lengths $\x \in \R_{>0}^{\n}$ to
be a free variable in $\cT_{\g,\n}^*$. To the contrary, we only have a twist for the inner curves.

In order to apply Wolpert's formula, we construct a pair of pants decomposition
$(\Gamma_\lambda)_{\lambda \in \Lambda}$ associated to our loop $\curve$ by considering successively
the bars $B_1, \ldots, B_r$. We do so by observing that a simple bar connecting two (non-necessarily
distinct) simple loops, with no other intersections, generates a pair of pants. This allows to
decompose $\Sf$ into $r$ pairs of pants, corresponding to the $r$ self-intersections of $\curve$.
This decomposition is not canonical and depends on the choice of numbering for the intersection
points of $\curve$.
An example of our pair of pants decomposition is represented in \cref{fig:pop_dec}.
\begin{figure}[h]
  \centering
  \includegraphics{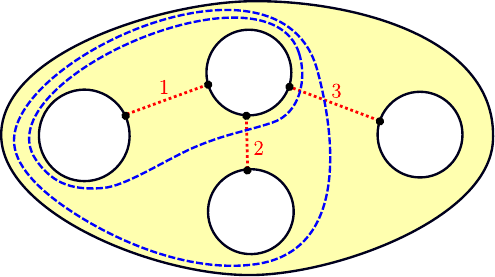}
  \caption{Example of the pair of pants decomposition we construct. The set $\Lambda$ contains seven
    elements: the five boundary components of $\Sf$ and  $2$ additional inner curves.}
  \label{fig:pop_dec}
\end{figure}

\paragraph{Jacobian}

Now that we have defined two sets of coordinates on $\cT_{\g,\n}^*$, we compute the determinant of
the Jacobian of the change of variable from these Fenchel--Nielsen parameters to the coordinates
$(\vec{L},\vec{\theta})$.  

Here are a few elements of the proof.

\begin{proof}
  We prove the result by induction on the number $r$ of self-intersections. There are two cases to
  consider in the case $r=1$: the figure-eight filling a pair of pants, as well as the pair of
  simple loops filling a once-holed torus. The computation is straightforward in both cases. For
  instance, for the pair of pants (represented on the left of \cref{fig:geod_diag}), if we denote as
  $\x = (x_1, x_2, x_3) \in \R_{>0}^3$ the lengths of the three boundary components of the pair of
  pants, we have $x_1=y_1=\theta_1^+$ and $x_2=y_2=\theta_1^-$. Cutting the pair of pants along its
  orthogeodesic $B_1$, we obtain a convex right-angled hexagon, the classic trigonometric formula
  \cite[Theorem 2.4.1 (i)]{buser1992} yields
  \begin{equation}
    \label{eq:tri_hex}
    \cosh (L_1)=\frac{\cosh \div{x_1} \cosh \div{x_2}+\cosh \div{x_3}}{\sinh \div{x_1}\sinh \div{x_2}},
  \end{equation}
  hence $\sinh \div{x_3} \d x_3 =2 \sinh(L_1) \sinh \div{x_1}\sinh \div{x_2}  \d L_1$, which leads
  to the claim.

    \begin{figure}[h]
    \centering
    \includegraphics{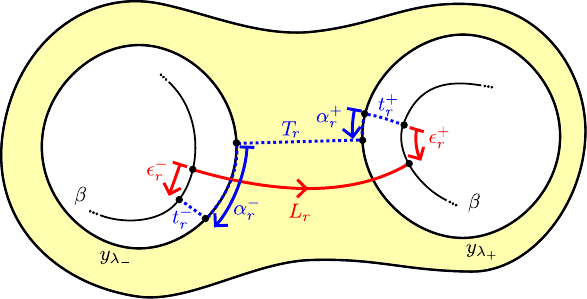}
    \caption{The proof of \cref{t:maindet} in one case. The yellow part is the new pair of pants
      added at the step $r$, which (in this case) connects two distinct boundary components of the
      surface at the step $r-1$. }
    \label{fig:casea}
  \end{figure}

  For $r \geq 2$, we assume the result holds for the same diagram with the bar $B_r$ removed, and
  then work out what adding the bar $B_r$ does to our coordinates. Actually, most coordinates are
  left unchanged, apart from the addition of the new length $L_r$, and the fact that two of the
  prior simple portions are now cut at the two endpoints of $B_r$, and hence become four.  There are
  two cases to consider, depending on the way the last bar is added; one of them is represented in
  \cref{fig:casea}. In both cases, the idea is to break the orthogeodesic $\overline{B}_r$ into a
  succession of perpendicular geodesic segments, in the same homotopy class with endpoints gliding
  along $\beta$. We use these geodesic segments to define the twist parameters, $\alpha_r^\pm$ on
  \cref{fig:casea}. A key observation is that the orthogeodesics outside of the new pair of pants
  are independent of the metric on the pair of pants, and therefore can be viewed as fixed in our
  proof by induction. The computation then reduces to computing the change of variable
  $(T_r,\alpha_r^+,\alpha_r^-) \rightarrow (L_r,\epsilon_r^+,\epsilon_r^-)$ and relating
  $\epsilon_r^\pm$ to our new $\theta$ parameters.
\end{proof}

\begin{rem}
  This approach to proving \cref{t:maindet} does not feel very
satisfactory, as it relies on creating an auxiliary pair of pants decomposition of $\Sf$, which is
done in a quite arbitrary way, and computing the change of variables. We believe it would be
interesting to find a more intrinsic proof based on the definition of the Weil--Petersson form in
terms of quadratic differentials.
\end{rem}

\subsection{Expression for the length function}
\label{sec:expr-length-funct}

Now that we have defined new coordinates on the space $\cT_{\g,\n}^*$ tailored to our generalized
eight $\mathbf{c}$, it is time to take advantage of their structure, and prove a nice formula for
the length of $\mathbf{c}$ in terms of $(\vec{L}$, $\vec{\theta})$.

\subsubsection{Reconstitution of the loop $\curve$}
\label{s:straightening2}

The reason why the coordinates $(\vec{L},\vec{\theta})$ are great to study the length of $\curve$ is
that we can reconstitute $\curve$ by following the bars $B_q$ and simple portions $\cI_q$ for
$q \in \Theta$. More precisely, for $q \in \Theta$, we define $p(q):=B_q \smallbullet \cI_q$ to be
the concatenation of the oriented segments $B_q$ and $\cI_q$ (which do, by definition, connect).
The endpoint of $p(q)$ is the origin of a bar $B_{\sigma q}$ for an element $\sigma q \in
\Theta$. This defines a permutation $\sigma$ of $\Theta$. It is easy to check that $\sigma$ is a
cycle\footnote{Actually, in \cite{anantharaman2025}, $\curve$ is a multi-loop with $\cc$ components,
  in which case $\sigma$ has $\cc$ cycles, each corresponding to a component of $\curve$.} of length
$2r$, and that the loop $\curve$ is freely homotopic to the concatenation
\begin{equation}
  \label{eq:cop}
  \cop := 
  p(q_0) \smallbullet p(\sigma q_0) \smallbullet \ldots \smallbullet p(\sigma^{2r-1}q_0)
\end{equation}
for any given starting point, say for instance $q_0=(1,+)$. 

\begin{figure}[h] \centering
  \includegraphics{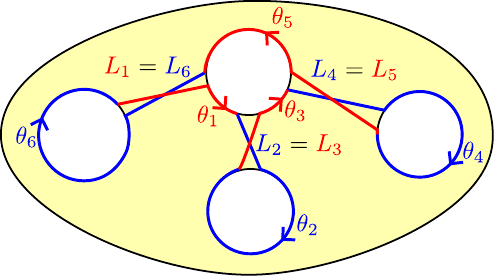}
  \caption{A deformation of the loop $\copg$. The lengths are labelled by the set $\Z_6$ matching
    the order in which the loop is explored, see \cref{sec:relabelling-theta}. The colors represent
    the signs of the bars, red for $+$, blue for $-$.}
  \label{fig:ex_cop}
\end{figure}

Let us now consider a point $Y\in \cT^*_{\g, \n}$. As in \cref{sec:defin-labell-coord}, we pick our
representative in Teichm\"uller space so that $\beta$ is a multi-geodesic. Replacing the bars $B_q$
by their orthogeodesic representatives $\overline{B}_q$ and accordingly adapting the simple portions
to be $\overline{\cI}_q$ allows us to consider a new representative of the homotopy class of
${\mathbf{c}}$, as a cyclic succession of orthogonal geodesics segments:
\begin{align}\label{e:decompo-ig}
  \copg
  = \bar p(q_0)\smallbullet  \ldots \smallbullet \bar p(\sigma^{2r-1}q_0)\end{align}
where $\bar p(q):=\overline{B}_q \smallbullet \overline{\cI}_q$. An example of this is
represented in \cref{fig:ex_cop}.

\subsubsection{Relabelling of $\Theta$}
\label{sec:relabelling-theta}

We shall now relabel the elements of $\Theta$ following the permutation $\sigma$. This can be done
for instance by identifying $\Theta$ with the cyclic set $\Z_{2r} = \Z \diagup 2r \Z$, via the
bijective map
\begin{equation}
  \begin{cases}
    \Z_{2r} & \rightarrow \Theta \\
    k & \mapsto \sigma^{k-1} q_0.
  \end{cases}
\end{equation}
After this identification, the permutation $\sigma$ is simply given by the shift $k \mapsto k+1$.
It is natural to view $\Theta$ as a cyclic set as the writing \eqref{eq:cop} is cyclically
invariant. 

\begin{expl}
  In the example from \cref{fig:ex_cop}, the permutation $\sigma$ is given by
  \begin{equation}
    \label{eq:sigma_expl}
    \sigma = \big((1,+) \quad (2,-) \quad (2,+) \quad (3,-) \quad (3,+) \quad (1,-)\big).
  \end{equation}
\end{expl}

In the rest of this section, following this identification of $\Theta$ with $\Z_{2r}$, for
$k \in \Z_{2r}$, we shall write $B_k$, $\cI_k$ for the $k$-th bar and $k$-th simple portion in the
expression \eqref{eq:cop}, and $p(k)$ for their concatenation, etc. This can lead to slight
conflicts of notations, so we shall always try and clarify which labelling we are using for
$\Theta$.

The orthogonal geodesic segments appearing in $\bar{\mathbf{c}}_{\mathrm{op}}$ have consecutive
lengths $L_1, \theta_1, \ldots, L_{2r}, \theta_{2r}$, where for $k \in \Z_{2r}$, we write $L_k$ for
the length of $\overline{B}_k$ and $\theta_k$ for the algebraic length of $\overline{\cI}_k$. Note
that the vector $(L_k)_{k \in \Z_{2r}}$ contains exactly two copies of each entry of $\vec{L}$ as
each bar is traversed exactly twice.

\subsubsection{Orientation rules}

The signs of the bars will be very important in the following, because they prescribe the directions
of the right turns in $\copg$. Indeed, due to the convention defined in \cref{fig:opening_int}, as
we go along $\copg$, we arrive on each positive bar from the left, and leave it to the right.  The
opposite occurs for negative bars. This is visible on \cref{fig:ex_cop}, where we colored the paths
$\bar{p}(k)$ depending on their signs. These ``orientation rules'' play a key role in
\cite{anantharaman2025}.

For $k \in \Z_{2r}$, we shall denote as $\epsilon_k \in \{\pm\}$ the sign component of
$\sigma^{k-1} q_0$, i.e. the direction in which the $k$-th bar is explored. Then, the geometry of
the geodesic representative $\copg$ is entirely determined by the sequences of lengths $(L_k)_{k \in
\Z_{2r}}$, $(\theta_k)_{k \in \Z_{2r}}$ as well as the signs $(\epsilon_k)_{k \in \Z_{2r}}$, which
tell us which direction to turn at the end of each geodesic segment.

\subsubsection{The formula for the length of $\mathbf{c}$}

We are now ready to state and prove our formula for the length of $\mathbf{c}$ in terms of the
lengths $L_k$, $\theta_k$ and the signs $\epsilon_k$.

\begin{thm}
  \label{prop:form_l_gamma_try}
   For any $Y \in \mathcal{T}_{\g,\n}^*$ of coordinates
  $(\vec{L},\vec{\theta})$, 
  \begin{align*}
    \cosh \div{\ell_Y({\mathbf{c}})}
    & = \sum_{\substack{\delta \in \{ \pm 1 \}^{2r} \\ \delta_1 \ldots \delta_{2r} = +1}}
    \prod_{k=1}^{2r} \hyp_{\delta_k} \div {\theta_k} \cosh
    \left(
    \frac 12
    \sum_{k=1}^{2r} \rho^\delta_k \epsilon_k L_k 
    \right)
  \end{align*}
  where $\rho^\delta_k = \prod_{1 \leq j<k} \delta_{j}$ (for $k=1$ this empty product is equal to
  $1$).
\end{thm}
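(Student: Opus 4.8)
The plan is to compute the length $\ell_Y(\curve)$ by realizing the geodesic representative $\copg$ from \eqref{e:decompo-ig} as a product of hyperbolic isometries and taking the trace of the resulting $\mathrm{PSL}_2(\R)$ element, since for a closed geodesic of length $\ell$ one has $|\Tr| = 2\cosh(\ell/2)$. The loop $\copg$ is a cyclic concatenation $\overline{B}_1 \smallbullet \overline{\cI}_1 \smallbullet \overline{B}_2 \smallbullet \overline{\cI}_2 \smallbullet \cdots \smallbullet \overline{B}_{2r} \smallbullet \overline{\cI}_{2r}$ of geodesic segments, where consecutive segments meet either at a right angle (at the endpoints of the bars, where $\copg$ passes from a bar $\overline{B}_k$ onto the multicurve $\beta$) or along $\beta$ itself (where the simple portion $\overline{\cI}_k$ runs inside $\beta$ for algebraic length $\theta_k$, then $\copg$ leaves $\beta$ onto the next bar $\overline{B}_{k+1}$ at a right angle, turning left or right according to the sign $\epsilon_{k+1}$). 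So the holonomy of $\copg$ is an alternating word in ``translate by $L_k$ along a bar'', ``translate by $\theta_k$ along $\beta$'', with $90^\circ$ turns in between, the chirality of each turn dictated by the $\epsilon_k$.

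Concretely, I would work in $\mathrm{SL}_2(\R)$ and use the standard generators: hyperbolic translation by distance $t$ along a fixed axis is conjugate to $\mathrm{diag}(e^{t/2}, e^{-t/2})$, and a rotation by $\pm\pi/2$ about a point is represented by a fixed elliptic matrix $R_{\pm}$. Writing the holonomy of $\copg$ as
\[
  \gamma = \prod_{k=1}^{2r} \bigl( A(L_k) \, R_{\epsilon_k} \, A(\theta_k) \, R_{\epsilon_k'} \bigr)
\]
for appropriate matrices (with the turn matrices depending on the signs $\epsilon_k$), one expands $\Tr(\gamma)$. The key structural feature that makes the answer clean is that a $90^\circ$ turn matrix $R$ satisfies $R\, \mathrm{diag}(e^{s/2},e^{-s/2})\, R^{-1}$ swapping roles of $\cosh$ and $\sinh$: more precisely, in the $2\times 2$ algebra, the alternating product of diagonal translations and quarter-turns produces, upon taking the trace, a sum over sign patterns $\delta \in \{\pm 1\}^{2r}$ of a product $\prod_k \hyp_{\delta_k}(\theta_k/2)$ (the $\cosh$ or $\sinh$ of the half $\theta$-lengths, depending on whether the $k$-th ``channel'' is on the diagonal or off-diagonal part) times $\cosh$ or $\sinh$ of a signed sum of the $L_k/2$'s. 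The constraint $\delta_1\cdots\delta_{2r}=+1$ appears because $\gamma$ must be a hyperbolic (loxodromic) element whose trace is a pure $\cosh$ — the ``$\sinh$ of the $L$-sum'' terms, which would arise from $\delta$-patterns with negative product, cancel in pairs or are forbidden by the cyclic closing-up condition — and the partial products $\rho^\delta_k = \prod_{j<k}\delta_j$ are exactly the cumulative sign one accrues by propagating a $\cosh$-versus-$\sinh$ bookkeeping bit through the word, twisted by the turn signs $\epsilon_k$.

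I would carry this out by induction on $r$, or more transparently by a direct matrix-product expansion: (i) fix coordinates so that $\beta$-segments act diagonally and bar-segments act by a conjugate diagonal matrix, the conjugating element being a fixed quarter-turn; (ii) expand the $2r$-fold product, using $\cosh(t/2)\cdot I + \sinh(t/2)\cdot J$ type decompositions (with $J$ the relevant traceless involution), so that each factor contributes a binary choice, giving the sum over $\delta$; (iii) track how the quarter-turn matrices permute the ``$\cosh$ slot'' and ``$\sinh$ slot'' and how the signs $\epsilon_k$ flip orientation, yielding the factor $\rho^\delta_k \epsilon_k$ multiplying $L_k$; (iv) observe that the off-diagonal entries of the total product must vanish for $\gamma$ to have the form $\pm\mathrm{diag}(e^{\ell/2}, e^{-\ell/2})$ up to conjugacy — equivalently, take the trace and note the terms surviving are precisely those with $\prod_k \delta_k = +1$. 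The base case $r=1$ is exactly the right-angled hexagon / pair-of-pants computation already recorded in \eqref{eq:tri_hex} and \eqref{eq:length_eight}, which I would use as a sanity check. The main obstacle I anticipate is bookkeeping the turn signs correctly: making sure that the orientation rules from \cref{fig:opening_int} (arrive on a positive bar from the left, leave to the right, opposite for negative bars) translate into the correct placement of $R_+$ versus $R_-$ in the word, and hence into the correct coupling $\rho^\delta_k \epsilon_k$ rather than some other sign convention. This is purely combinatorial-geometric, but it is where an off-by-one or a sign error would hide, so I would verify it on the explicit $r=3$ example of \cref{fig:ex_cop} with the permutation \eqref{eq:sigma_expl} before declaring victory.
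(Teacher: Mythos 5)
Your proposal follows the same route as the paper: realize the holonomy of $\copg$ as a product of $\mathrm{PSL}_2(\R)$ matrices --- diagonal translations $a^{\epsilon_k L_k}$ for the bars and $w^{\theta_k}$ (a quarter-turn conjugate of the geodesic flow) for the simple portions --- take the trace via $2\cosh\div{\ell_Y(\curve)} = |\Tr\,\rho(\curve)|$, and expand each $w^{\theta_k}$ into its $\cosh$-diagonal and $\sinh$-off-diagonal parts to produce the sum over $\delta$, with $\rho^\delta_k$ recording the cumulative sign accrued by pushing the off-diagonal involutions through the diagonal factors, and the constraint $\delta_1\cdots\delta_{2r}=+1$ coming from the vanishing trace of an off-diagonal product (not from requiring $\gamma$ to be diagonal, as your step (iv) suggests --- the trace is conjugation-invariant, so no normal form is needed). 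The one genuine gap is that your sketch never removes the absolute value in $|\Tr\,\rho(\curve)|$, which the paper identifies as the main difficulty of the proof: the stated formula asserts $\cosh\div{\ell_Y(\curve)}=\tfrac12\Tr(a_1b_1\cdots a_{2r}b_{2r})$ with a definite sign, and establishing that this trace is in fact $+2\cosh\div{\ell}$ rather than $-2\cosh\div{\ell}$ requires a continuity argument relying on the orientation rules (the $\epsilon_k$), which you should add before declaring victory.
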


\begin{expl}
  For $r=1$, the formula reads
  \begin{equation*}
    \cosh \div{\ell_Y(\curve)}
    = \cosh \div{\theta_1^+}\cosh \div{\theta_1^-}
    + \sinh \div{\theta_1^+}\sinh \div{\theta_1^-} \cosh(L_1)
  \end{equation*}
  which is equivalent to our known formula for the figure-eight
  \begin{equation*}
    \cosh \div{\ell_Y(\curve)}
    = 2 \cosh \div{x_1}\cosh \div{x_2}
    + \cosh\div{x_3}
  \end{equation*}
  since $\theta_1^-=x_1$, $\theta_1^+=x_2$ and
  $
    \cosh(L_1) = \frac{\cosh \div{x_1} \cosh \div{x_2}
    + \cosh \div{x_3}}{\sinh \div{x_1} \sinh \div{x_2}}$
by the usual hyperbolic identities in right-angled hexagons.
\end{expl}

\begin{rem}
  We actually prove this result holds for any choice of openings of the intersections
  of~$\mathbf{c}$, which provides different expressions for the length of~$\mathbf{c}$ in different
  coordinate systems. The formula also holds for multi-loops
  $\curve = (\curve_1, \ldots, \curve_\cc)$, in which case we prove it component by component, and
  the lengths appearing for each component are those from the corresponding cycle of $\sigma$.
\end{rem}

\begin{proof}
  It is a classical fact that $\cT^*_{\g, \n}$ may be seen as the space of discrete faithful
  representations of the fundamental group $\pi_1(\mathbf{S})$ into $\mathrm{PSL}(2, \R)$ modulo
  conjugacy. Moreover, if $Y\in \cT^*_{\g, \n}$ corresponds to the representation
  $\rho : \pi_1(\mathbf{S}) \rightarrow \mathrm{PSL}(2, \R)$, we can express all lengths thanks to
  the trace of $\rho$:
  \begin{align} \label{e:trace_cosh}2 \cosh \div{\ell_Y(\gamma)}= |\Tr\, \rho(\gamma)|,\end{align}
  where $\rho(\gamma)\in \mathrm{PSL}(2, \R)$ is the image under $\rho$ of the homotopy class of
  $\gamma$.  Our method to express lengths in the new coordinates consists in providing an explicit
  expression for $\rho(\gamma)$ in~\eqref{e:trace_cosh}, by writing a representative of the homotopy
  class of $\gamma$ as a succession of orthogonal geodesic segments.

We recall that the unit tangent bundle $T^1 \IH$ itself can be identified with
$\mathrm{PSL}(2, \R)$. We can then identify different geometric actions on $T^1\IH$ with the
right-multiplication by a specific matrix: $a^t$ is the action of the geodesic flow at time $t$,
$w^t$ the conjugate of the geodesic flow by a rotation of angle $\pi/ 2$, and $k^\theta$ the
rotation of angle $\theta$ by $k^\theta$, where
\begin{align*}
  a^t =
  \begin{pmatrix}
    e^{\frac{ t}{2}} & 0 \\ 0 & e^{-\frac{t}{2}}
  \end{pmatrix}
  \quad
   w^t= 
  \begin{pmatrix}
    \cosh \div{t} & \sinh \div{t} \\ \sinh \div{t} & \cosh \div{t}
      \end{pmatrix}
  \quad
 k^\theta= 
  \begin{pmatrix}
    \cos \div{\theta} & \sin \div{\theta} \\- \sin \div{\theta} & \cos \div{\theta}
      \end{pmatrix}.
\end{align*}
      These moves are represented on Figure \ref{fig:flows}.

      \begin{figure}[h!]
        \center
     \includegraphics{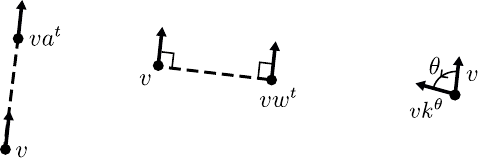}
     \caption{The moves corresponding to the matrices $a^t$, $w^t$ and $k^\theta$.}
     \label{fig:flows}
   \end{figure}

   Using the decomposition \eqref{e:decompo-ig} of $\copg$ as a succession of orthogonal geodesics,
   we prove that
\begin{equation}
  \label{eq:l_gamma_product_trace}
  \cosh \div{\ell_Y({\mathbf{c}})} = \frac{1}{2} \,  \Tr \big(a_1b_1 \ldots a_{2r}b_{2r}\big)
\end{equation}
where the matrices $a_k, b_k$ are defined by
\begin{align*} 
  &a_k = a^{\epsilon_k L_k}=
  \begin{pmatrix}
    e^{\frac{\epsilon_k L_k}{2}} & 0 \\ 0 & e^{-\frac{\epsilon_k L_k}{2}}
  \end{pmatrix} 
   &b_k = w^{\theta_k}=
  \begin{pmatrix}
    \cosh \div{\theta_k} & \sinh \div{\theta_k} \\ \sinh \div{\theta_k} & \cosh \div{ \theta_k}
  \end{pmatrix}.
\end{align*}
The main difficulty in the proof is to remove the absolute value in \eqref{e:trace_cosh}, which we
do by a continuity argument, using our orientation rules.  We then expand the product of matrices,
and conclude by an explicit computation.
\end{proof}

\subsection{Conclusion}

Let us now conclude and provide a few elements we use to prove \cref{thm:FR_type}.  So far, thanks
to our integral formula and asymptotic expansions, we have expressed the volume functions
$V_g^\type(\ell)$ as a combination of functions of the form
\begin{equation}
  \int_{\ell_Y(\curve)=\ell}
  \prod_{j \in V} f_j(x_j)
  \prod_{j=1}^k x_{i_j} \delta(x_{i_j}-x_{i_j'})
  \frac{\d \Volwp[\g,\n](\x,Y)}{\d \ell}
\end{equation}
where the functions $f_j$ are such that, for all $j \in V$, $f_j(x)e^{x/2}/x$ is a Friedman--Ramanujan
function.

We now use our change of variable from $(\x, Y) \in \cT_{\g,\n}^*$ to our adapted variables
$(\vec{L},\vec{\theta}) \in \mathfrak{D} \subset \R_{>0}^r \times \R^\Theta$.  Doing so has two
consequences: we have to plug in the Jacobian of the change of variable (which we computed in
\cref{t:maindet}), and we have to express everything in terms of the new variables
$(\vec{L},\vec{\theta})$. This requires to express the length of the boundary components of $\Sf$,
the $(y_\lambda)_{\lambda \in \Lambda \setminus \Lambdain}$, as a function of
$(\vec{L},\vec{\theta})$, which we do by a method similar to the one in
\cref{sec:expr-length-funct}.

Once we work everything out, we end up with a pseudo-convolution of Friedman--Ramanujan functions,
or, more precisely, an integral of the form
\begin{equation*}
  \int_{\vec{L}, \thetaNe} \psi(\vec{L},\thetaNe)
  \int_{\thetaAc: \ell_Y(\curve)=\ell} \varphi_{\vec{L},\thetaNe}(\thetaAc) \prod_{q \in \Theta_{\mathrm{ac}}} \theta_q^{\rK_q} e^{\theta_q}
  \frac{\d^r \vec{L} \d^{2r} \vec{\theta}}{\d \ell}
\end{equation*}
where we have split the set of parameters
$\Theta = \Theta_{\mathrm{ac}} \sqcup \Theta_{\mathrm{ne}}$ into \emph{active} and \emph{neutral}
parameters and accordingly the vector $\vec{\theta}$ as
$\thetaAc = (\theta_q)_{q \in \Theta_{\mathrm{ac}}}$ and
$\thetaNe = (\theta_q)_{q \in \Theta_{\mathrm{ne}}}$. The active parameters
$q \in \Theta_{\mathrm{ac}}$ are the terms which have a principal term in the integral above,
$\theta_q^{\rK_q}e^{\theta_q}$, whilst the neutral parameters do not (they are Friedman--Ramanujan
remainders). Interestingly, all the parameters $\vec{L}$ will always be neutral parameters as they
do not contribute towards principal terms\footnote{The reason why this is true is that each bar
  $B_j$ is explored twice in the trajectory of $\curve$. This is a general fact: if one simple
  portion is explored twice as we go along $\curve$, it will correspond to neutral parameters (we
  call these simple portions \emph{shielded} in \cite{anantharaman2025}).}.

We then consider all the neutral variables as constants, and focus on the pseudo-convolution 
\begin{equation}
  \label{eq:pseu_c_1}
  \int_{\thetaAc: \ell_Y(\curve)=\ell}
  \varphi_{\vec{L},\thetaNe}(\thetaAc) \prod_{q \in \Theta_{\mathrm{ac}}} \theta_q^{\rK_q} e^{\theta_q}
  \frac{\d \thetaAc}{\d \ell}.
\end{equation}
This pseudo-convolution has the following defining functions.
\begin{itemize}
\item The height function is the function $\thetaAc \mapsto \ell_Y(\curve)$, which we have expressed
  in \cref{sec:expr-length-funct}.
\item The weight function $\varphi_{\vec{L},\thetaNe}(\thetaAc)$ contains a lot of defects coming
  from the expansions and substitutions we made in the computation. It notably contains the
  indicator function of our domain $\mathfrak{D}$ as well as terms coming from expressing
  $y_\lambda$ as a function of $(\vec{L},\vec{\theta})$.
\end{itemize}
At this point, we wish to reproduce the proof of \cref{t:intermediate} and apply the operator $\cL$
to \eqref{eq:pseu_c_1}. We will want to use that the pseudo-convolution is close to an actual
convolution, i.e. the height function is close to $\sum_{q \in \Theta_{\mathrm{ac}}}\theta_q$ and the
  weight has small derivatives. However, there are two challenges here.
\begin{enumerate}
\item The derivatives of the weight function $\varphi_{\vec{L},\thetaNe}$ are difficult to estimate,
  notably due to the discontinuity of the indicator function, and the terms coming from the
  substitution of $y_\lambda$ as a function in $(\vec{L},\vec{\theta})$ during our change of
  variable.
\item Whilst one can check that the height function does belong to the class $\cE_n^{(a)}$ directly
  from the expression of the length in the coordinates $(\vec{L},\vec{\theta})$, which is what we
  want, we actually cannot, in all generality, force the parameters $\thetaAc$ to live in an
  interval of the form $(a,+\infty)^n$ for $a>0$ on the whole domain of integration. This is due to
  the fact that some of the $\theta_q$ might take negative values in certain geometric
  configurations (which we call \emph{crossings}).
\end{enumerate}
We solve both these technical issues by introducing two partitions of unity of $\cT_{\g,\n}^*$, one
for each issue, which allows us to understand better which lengths and parameters do or do not go to
infinity. We  can then understand the derivatives of the weight function.  On each element of
the partition, we introduce new modified coordinates $\tilde{\theta}_q$, so that the height function
actually satisfies an hypothesis $\cE_n^{(a)}$ on the whole space.

Once all of this work is completed, we conclude by using a refined version of
\cref{t:intermediate}, relying on a geometric comparison estimate. Once we have applied the
operator $\cL$ enough times to ``kill'' all the exponential terms $\theta_q^{\rK_q}e^{\theta_q}$, we
then integrate against all the neutral parameters, and conclude that the integral is a
Friedman--Ramanujan remainder. 




\bibliographystyle{alpha}
\bibliography{bibliography}

\end{document}